\documentclass[a4paper]{amsart}

\usepackage[textsize=tiny]{todonotes}

\usepackage[shortlabels]{enumitem}

\usepackage{amsmath}
\usepackage{amssymb}
\usepackage{amsfonts}
\usepackage{amsthm}
\usepackage{xcolor}
\usepackage{verbatim}
\usepackage{musicography}
\usepackage{newpxtext}
\usepackage{newpxmath}
\usepackage{fullpage}
\usepackage{mathtools}
\usepackage{tikz-cd}
\usepackage{hyperref}
\theoremstyle{plain}
\newtheorem{theorem}{Theorem}[section]
\newtheorem{lemma}[theorem]{Lemma}
\newtheorem{proposition}[theorem]{Proposition}
\newtheorem{corollary}[theorem]{Corollary}

\newtheorem{example}[theorem]{Example}

\newtheorem*{proposition*}{Proposition}
\newtheorem*{lemma*}{Lemma}
\newtheorem*{theorem*}{Theorem}

\theoremstyle{definition}
\newtheorem{definition}[theorem]{Definition}
\newtheorem{construction}[theorem]{Construction}
\newtheorem{remark}[theorem]{Remark}
\newtheorem*{notation}{Notation}

\newcommand{\A}{\ensuremath{\underline{\mathbb{A}}}}
\newcommand{\Z}{\ensuremath{\underline{\mathbb{Z}}}}
\newcommand{\M}{\ensuremath{\underline{M}}}
\newcommand{\R}{\ensuremath{\underline{R}}}
\newcommand{\upi}{\ensuremath{\underline{\pi}}}

\newcommand{\ck}[1]{\ensuremath{C_{2^{#1}}}}
\newcommand{\omegai}[1]{\ensuremath{\omega^{(#1)}}}

\usepackage{xy}
\input xy
\xyoption{all}

\newcommand{\uta}{u_{2\alpha}}
\newcommand{\aal}{a_{\alpha}}
\newcommand{\alk}{a_{\lambda_k}}
\newcommand{\Si}{\Sigma^{-1}}
\newcommand{\ZZ}{\mathbb{Z}}
\newcommand{\RR}{\mathbb{R}}

\DeclareMathOperator{\coker}{coker}
\DeclareMathOperator{\tr}{tr}
\DeclareMathOperator{\res}{res}
\DeclareMathOperator{\id}{id}
\DeclareMathOperator{\ind}{Ind}
\DeclareMathOperator{\im}{Im}
\DeclareMathOperator{\Hom}{Hom}
\DeclareMathOperator{\Ext}{Ext}
\DeclareMathOperator{\Ab}{Ab}

\DeclareRobustCommand{\SkipTocEntry}[5]{}

\title{On the structure of the $RO(G)$-graded homotopy of $H\underline{M}$ for cyclic $p$-groups}
\author{Igor Sikora}
\address{Krakow University of Economics, Rakowicka 27, 31-510 Krakow, Poland}
\email{sikorai@uek.krakow.pl}
\author{Guoqi Yan}
\address{University of Notre Dame, Notre Dame, IN 46556 USA}
\email{gyan@nd.edu}

\begin{document}
\begin{abstract}
    We study the structure of the $RO(G)$-graded homotopy Mackey functors of any Eilenberg-MacLane spectrum $H\M$ for $G$ a cyclic $p$-group. When $\R$ is a Green functor, we define orientation classes $u_V$ for $H\R$ and deduce a generalized gold relation. We deduce the $a_V,u_V$-isomorphism regions of the $RO(G)$-graded homotopy Mackey functors and prove two induction theorems. As applications, we compute the positive cone of $H\A$, as well as the positive and negative cones of $H\Z$. The latter two cones are essential to the slice spectral sequences of $MU^{((C_{2^n}))}$ and its variants.
\end{abstract}

\maketitle
\tableofcontents
\section{Introduction}
Let $G$ be a finite group. In $G$-equivariant topology the role of ordinary cohomology is played by \emph{Bredon cohomology}. These theories are easy to define, but making computations in them is more complicated than in their non-equivariant analogues. This comes mainly from two reasons. Firstly, Bredon theories take coefficients in diagrams of abelian groups, indexed over subgroups of $G$. Secondly, rather than graded over integers, Bredon theories are naturally graded over the ring of representations of $G$ - they are $RO(G)$-graded.

These two reasons are connected. As shown in \cite{MR598689}, a $\mathbb{Z}$-graded Bredon theory extends to $RO(G)$-graded theory if and only if the coefficients are in the form of a \emph{Mackey functor}. As in non-equivariant topology, a Bredon theory with coefficients in a Mackey functor $\M$ is represented by the \emph{Eilenberg-MacLane spectrum} $H\M$. Spectra of this form are ubiquitious in equivariant homotopy theory. 

For instance, in the groundbreaking work of Hill-Hopkins-Ravenel \cite{HHRa} on the Kervaire invariant one problem, the authors studied the $C_8$-equivariant spectrum $MU^{((C_{8}))}=N_{C_2}^{C_8}MU_{\RR}$, where $N_{C_2}^{C_8}$ is usually referred to as the HHR norm functor. With a suitably chosen class $D\in \pi_{19\rho_8}^{C_8}MU_{\RR}$, the spectrum $\Omega=(D^{-1}MU^{((C_8))})^{C_8}$ satisfies the Periodicity, Gap and Detection Theorems, which makes it a powerful tool in detecting the Kervaire invariant one elements in the stable stems. The Gap Theorem is deduced from a Bredon cohomology computation
\[
H^*_G(S^{m\rho_{C_{2^k}}};\Z). 
\]

In the work of Hahn-Shi \cite{Hahn-Shi}, they showed that at the prime $2$, all of the Morava $E$-theories receive real-orientations
\[
MU_{\RR}\to E_n
\]
which lifts to $G$-equivariant maps $N_{C_2}^GMU_{\RR}\to E_n$ where $G$ is any finite subgroup of the Morava stablizer group that contains $C_2$. With a suitably chosen class $D\in \pi^G_{*\rho_G}N_{C_2}^GMU_{\RR}$, this orientation admits a factorization
\begin{equation*}
    \xymatrix{
    N_{C_2}^GMU_{\RR}\ar[r]\ar[d]&E_n\\
    D^{-1}N_{C_2}^GMU_{\RR}\ar@{-->}[ur]
    }
\end{equation*}
which turns the computations in chromatic homotopy theory to computations of the two equivariant spectra on the left. The main tool for the equivariant computations is the slice spectral sequence first invented by Dugger \cite{Duggerthesis} and utilized to a great extent by Hill-Hopkins Ravenel \cite{HHRa}.

By the Slice Theorem of \cite{HHRa}, the slices of $MU^{((C_{2^k}))}$ are of the forms
\[
H\Z\wedge {C_{2^k}}_+\wedge_{H} S^{m\rho_H},\,e\neq H\subset C_{2^k},\, m\geq 0.
\]
The trick in \cite[Cor.10.4]{HHRb} can be used to prove that the slices of $D^{-1}N_{C_2}^GMU_{\RR}$ are again of the above form, except that in this case, we allow $m$ to be negative integers. Thus the inputs of the $RO(G)$-graded slice spectral sequences of the the periodic $D^{-1}MU^{((C_{2^k}))}$ as well as the periodic versions of quotients of $MU^{((C_{2^k}))}$ are given by the positive and negative cones of the $RO(G)$-graded homotopy groups of $H\Z$, i.e, the areas corresponding to homology and cohomology of actual representation spheres. Since the multiplicative structure of the slice spectral sequence is indispensable in determining the differentials, and the two kinds of most important classes, the Euler and orientation classes, are not in regular-representation-graded homotopy, knowing the entire positive cone and negative cone structure is essential even for computing the $\ZZ$-graded homotopy groups of $D^{-1}MU^{((C_{2^k}))}$.

The main goal of this paper is determining the structure of $H\M^G_\star$ when $G$ is a cyclic group of a prime power order. As applications, we utilize our structural theorems to determine the positive and negative cone of $H\Z$, as well as the positive cone of $H\A$. By working $p$-locally, it suffices to concentrate on the prime $2$ as we will explain in Section \ref{basics}. So for the rest of the introduction, $G=C_{2^k}$. 

\addtocontents{toc}{\SkipTocEntry}
\subsection*{Cellular chains for representation spheres}
One of the tools we use for the calculations of $RO(G)$-graded abelian group structure are the cellular chain and cochain complexes of representation spheres. We give explicit descriptions of differentials in these complexes for any representation $V$ and coefficients $\M$ in Theorem \ref{keythm}, which can be summarized as follows:
\begin{theorem*}
    Let $G=C_{2^k}$, the cellular chain $C_*^G(S^V;\M)$ and cochain $C_*^G(S^V;\M)$ for the representation sphere $S^V$ are completely determined for any $G$-Mackey functor $\M$ and follow from its structure.  
\end{theorem*}
This theorem is a wide extension of the result in Section 3 of \cite{HHRb}, who computed these differentials for the constant Mackey functor $\Z$. Having this description we can compute two important entries: $H\M_{|V|-V}$ for $V$ orientable and $H\M_{-V}$. The first one is equal to the top homology group of $S^V$ and is computed in Proposition \ref{prop mackey uv}:
\begin{proposition*}
    The Mackey functor structure of $H\M_{|V|-V}$ is given as follows:
    \[
H\M_{|V|-V}(H)=
\begin{cases}
\M(G_V)^H&\textrm{if }G_V\subset H,\\
\M(H)&\textrm{otherwise.}
\end{cases}
\]
The structure maps are induced from $\M$ and from fixed point Mackey functor of the $G$-module $\M(G_V)$.
\end{proposition*}
An analogous computation for non-orientable $V$ is given in Propositions \ref{prop 1-alpha} and \ref{prop mult by u on 1-alpha}. As noted above, we also compute $H\M_{-V}$, which is equal to the bottom homology group of the $S^V$. This is done in Proposition \ref{prop mackey av}.
\addtocontents{toc}{\SkipTocEntry}
\subsection*{\texorpdfstring{$a_\lambda$}{}-periodicity} Let $S^0\to S^V$ be the inclusion and denote the corresponding class in $\pi_{-V}^G(S^0)$ by $a_V$, as well as its Hurewicz image in $H\R_{-V}^G$ for any Green functor $\R$. For $G=C_{2^k}$, denote by $\lambda$ its faithful irreducible representation. Then multiplication by $a_\lambda$ in $H\M^G_\star$ together with the induction method allows us to reduce computations to groups graded over representations with the total degree $-1$, $0$ and $1$. We firstly note that we can easily describe multiplication by this element using Propositions \ref{prop alambda iso} and \ref{prop action of alambda}, which can be summarized as follows:

\begin{proposition*}
The multiplication map $a_\lambda\colon H\M^G_V\to H\M^G_{V-\lambda}$ is an isomorphism unless the total degree of $V$ is either $0$, $1$ or $2$. Moreover, if the total degree of $V$ is equal to zero then $a_\lambda$ is an epimorphism, and if the total degree of $V$ is $2$ then $a_\lambda$ is a monomorphism.
\end{proposition*}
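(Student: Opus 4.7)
The plan is to extract $a_\lambda$ from the standard sphere--disk cofiber sequence
\[
S(\lambda)_+ \to S^0 \xrightarrow{a_\lambda} S^\lambda
\]
and use the resulting long exact sequence to control its kernel and cokernel. The identification of the second map with $a_\lambda$ comes from the $G$-contractibility of $D(\lambda)$ together with $D(\lambda)/S(\lambda) \cong S^\lambda$. Smashing with $H\M$ and taking $\pi^G_V(-)$ yields
\[
\cdots \to H\M^G_V(S(\lambda)_+) \to H\M^G_V \xrightarrow{a_\lambda} H\M^G_{V-\lambda} \to H\M^G_{V-1}(S(\lambda)_+) \to \cdots,
\]
so $a_\lambda$ will be an isomorphism, an epimorphism, or a monomorphism as soon as both flanking groups vanish, only the right one vanishes, or only the left one vanishes, respectively.

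The main step is then to determine in which degrees $H\M^G_V(S(\lambda)_+)$ can be nonzero. Because $\lambda$ is faithful, I would equip $S(\lambda)$ with the evident free $G$-CW structure of the equivariant circle: one orbit of $0$-cells and one orbit of $1$-cells, both of type $G/e$. The resulting skeletal cofiber sequence
\[
(G/e)_+ \to S(\lambda)_+ \to \Sigma (G/e)_+
\]
reduces the computation to $\pi^G_\ast\bigl(H\M \wedge (G/e)_+\bigr)$, which by the Wirthm\"uller isomorphism equals the non-equivariant $\pi^e_{|\ast|}(H\M)$. These are concentrated in integer degree zero because $H\M$ restricts non-equivariantly to the ordinary Eilenberg--MacLane spectrum on $\M(G/e)$. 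Therefore $H\M^G_V(S(\lambda)_+)$ can only be nonzero when $|V| \in \{0,1\}$.

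The three parts of the proposition now follow by inspection of the long exact sequence: both flanking groups vanish whenever $|V| \notin \{0,1,2\}$, only the right one is forced to vanish when $|V|=0$ (since then $|V-1|=-1$), and only the left one is forced to vanish when $|V-\lambda|=0$, i.e.\ when $|V|=2$. I do not anticipate a serious obstacle; the cellular description of $S(\lambda)$ and the non-equivariant vanishing of $H\M$ away from degree zero are standard, so the only genuine content is organizing the Wirthm\"uller/skeletal reduction cleanly.
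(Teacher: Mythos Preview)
Your proposal is correct and follows essentially the same approach as the paper: the paper also uses the cofiber sequence $S(\lambda)_+\to S^0\to S^\lambda$ and computes $H\M^G_V(S(\lambda)_+)$ via the cellular description of $S(\lambda)$ as the cofiber of $G/e_+\xrightarrow{1-\gamma}G/e_+$, which is a rotation of your skeletal sequence. The paper's Lemma on $H\M_V(S(\lambda)_+)$ actually identifies the nonvanishing values explicitly (as fixed points and coinvariants of $\M(e)$), which is needed later to pin down the kernel and cokernel of $a_\lambda$ as $\ker(\res^G_e)$ and $\coker(\tr^G_e)$, but for the bare epi/mono/iso statement your vanishing argument is exactly what is required.
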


The implication of this proposition is strong: most classes in $H\M_{\star}^G$ are infinitely divisible by $a_{\lambda}$, or support infinite $a_{\lambda}$-towers. The Mackey functor structure of $H\M_V$ with $|V|=0$ determines the Mackey functors $H\M_{V+\lambda}$ and $H\M_{V-\lambda}$ (for a precise statement, see Proposition \ref{prop action of alambda}). This allows us to concentrate on the degrees where $|V|=-1,0,1$. 

Assume we want to compute $H\M^G_V$. Using the proposition above, we see that if $V$ is not of total degree $-1$, $0$ or $1$, we can add/substract $\lambda$ from it (this corresponds to the division/multiplication by $a_{\lambda}$) until either we hit one of these degrees or the multiplicity of $\lambda$ reaches zero. In the latter case, the obtained representation $W$ is such that $G_W\coloneqq \ker(W)\neq e$, and the computation can be reduced to a cyclic $2$-group of smaller order. In the first case, although in general we can only fit the homotopy Mackey functors in total degrees $-1$ and $1$ into long exact sequences (Proposition \ref{prop deg pm1}), we have complete control of them (as well as the homotopy Mackey functors in total degree $0$) in the positive and negative cone, which enables us to determined the positive cones of $H\Z$ and $H\A$ and the negative cone of $H\Z$. In general, these ideas lead to the two main induction theorems in this paper, see Theorem \ref{thm induction 1} and Theorem \ref{thm induction 2}.

\addtocontents{toc}{\SkipTocEntry}
\subsection*{Induction method}
A Mackey functor $\M$ over an abelian group $G$ consists of an abelian group $\M(H)$ with an action of $G/H$ for every subgroup $H$ of $G$. These abelian groups are connected by \emph{restrictions} and \emph{transfers}. If $G=C_{2^k}$, a $G$-Mackey functor has simple structure, that can be presented by a Lewis diagram:
\[
\begin{tikzcd}
\M(G)\dar[bend right]\\
\M(G')\uar[bend right]\dar[bend right]\\
\phantom{M}\ldots\phantom{M}\uar[bend right]\dar[bend right]\\
\M(e)\uar[bend right]
\end{tikzcd}
\]
Downward looking arrows are the restrictions and upward looking arrows are transfers. Here $G'$ is the unique subgroup of index two.

We observe that if $V$ is a $G$-representation, the $G$-equivariant computations of $H\M^G_V$ can be reduced to the $G/G_V$-equivariant computations of $V$-th homotopy group of an Eilenberg-MacLane spectrum with another coefficient $\M^{\sharp G_V}$. Here $G_V\coloneqq\ker(V)$, and the new Mackey functor is called the \emph{$G_V$-truncation of $\M$} and is obtained from $\M$ by forgetting $\M(K)$, where $K$ are proper subgroups of $G_V$. Therefore if $G_V\neq e$ we reduced the computation to a smaller group. Since in this paper we are concerned with the homotopy of \emph{all} Eilenberg-MacLane spectra, $\M^{\sharp G_V}$ can belong to a different class of Mackey functors from that of $\M$. For example, the Burnside Mackey functor $\A_{C_{2^k}}$ is not constant, and we have
\[
(\A_{C_{2^k}})^{\sharp C_2}\cong \A_{C_{2^{k-1}}}\oplus \Z^*_{C_{2^{k-1}}}.
\]

\addtocontents{toc}{\SkipTocEntry}
\subsection*{Multiplicative structure} If the Mackey functor $\M$ is a \emph{Green functor}, ie, a commutative ring in the category of Mackey functors, then the coefficients $H\M^G_\star$ is a $RO(G)$-graded ring. The methods presented in the paper allow for the analysis of this ring. The key ingredients are Euler classes $a_V$, orientation classes $u_V$ and relations between these elements. In Proposition \ref{prop av commute} we prove that for any $G$-representations $V,W$ the classes $a_V$ and $a_W$ commute, as well as classes $u_V$ and $u_W$ by Proposition \ref{prop uv commute}. The relation between the classes $a_V$ and $u_W$ is called the \emph{gold relation} and was first observed by Hill-Hopkins-Ravenel in \cite{HHRb} in the case of the constant Mackey functor $\underline{\mathbb{Z}}$. We generalize this relation to the following in Proposition \ref{prop au relation}:
\begin{proposition*}
Let $V,W$ be chosen from $\lambda_i$ for $1\leq i\leq k$ such that $G_V\subset G_W$. Let $R$ be a Green functor. Then there is the following relation in $H\R_{\star}^G$
\[
a_V\cdot\tr_{G_V}^{G_W}(1)u_W=a_W\cdot u_V.
\]
\end{proposition*}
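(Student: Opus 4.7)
The plan is to construct an explicit $G$-equivariant comparison map between the representation spheres $S^V$ and $S^W$, factor $a_W$ through $a_V$, and match the orientation classes up to a Mackey-enriched degree. Write $V=\lambda_i$ and $W=\lambda_j$ with $i\geq j$, so $[G_W:G_V]=p^{i-j}$. Identifying $V$ and $W$ with $\mathbb{C}$ equipped with the appropriate characters of $G$, the $[G_W:G_V]$-th power map $z\mapsto z^{[G_W:G_V]}$ is $G$-equivariant (raising the $V$-action to this power yields the $W$-action) and extends to a based $G$-map $g\colon S^V\to S^W$ of underlying degree $[G_W:G_V]$ that fixes both poles $\{0,\infty\}$. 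Hence $g\circ a_V=a_W$ as based $G$-maps $S^0\to S^W$, and dually $e_V\circ g^\vee=e_W$ for the Spanier--Whitehead duals $e_V\colon S^{-V}\to S^0$, $e_W\colon S^{-W}\to S^0$, $g^\vee\colon S^{-W}\to S^{-V}$.

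Unpacking the ring multiplication in $H\R^G_\star$ via this dual identity (using the sign-free commutativity of smash products, since $|V|=|W|=2$) yields the intermediate identity
$$a_W\cdot u_V=a_V\cdot g_\sharp(u_V),$$
where $g_\sharp(u_V)\in H\R^G_{|W|-W}$ is the class represented by $u_V\circ(\id_{S^{|V|}}\wedge g^\vee)$. It thus suffices to identify
$$g_\sharp(u_V)=\tr_{G_V}^{G_W}(1)\cdot u_W.$$

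The unit map $\A\to\R$ of Green functors---compatible with Euler classes, orientation classes, and transfers---reduces this final identification to the universal Burnside case. Geometrically, $g$ is a $[G_W:G_V]$-fold branched cover whose generic fiber is the $G_W$-set $G_W/G_V$; the Mackey-enriched equivariant degree of $g$ is therefore $\tr_{G_V}^{G_W}(1)=[G_W/G_V]\in A(G_W)$, and one can verify this by computing $g_\sharp(u_V)$ from the two-cell $G$-CW decompositions of $S^V$ and $S^W$ (built from $G/G_{V+}$- and $G/G_{W+}$-cells respectively) together with the effect of $g$ on these cells. I expect this last step---promoting the integer degree of $g$ on underlying spectra to its Mackey-enriched transfer class at the $G$-equivariant level---to be the main technical obstacle.
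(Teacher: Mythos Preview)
Your approach is valid and genuinely different from the paper's. The paper does not construct a comparison map between $S^V$ and $S^W$; instead it first proves the special case $G_V=e$ (i.e.\ $V=\lambda_k$) by an explicit cellular/spectral-sequence computation in $H\A$: it identifies $H\A^G_{2-\lambda-W}$, reads off that $u_W$ acts by $\res^G_{G_W}$ and that $a_W$ acts by $\tr^{G_W}_e$ on the relevant groups, and then compares the two images of $1\in\A(G)$. The general case is then obtained by passing to the quotient $G/G_V$, where $V$ becomes faithful, and invoking the lemma for the Green functor $\A^{\sharp G_V}$. Your route via the power map $g\colon S^V\to S^W$ and the factorization $a_W=g\circ a_V$ is more geometric, uniform in $V$ and $W$, and avoids the induction; it is essentially the argument used for the $\Z$-case in the literature, upgraded to $\A$. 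The trade-off is exactly the step you flag: promoting the underlying degree $[G_W:G_V]$ of $g$ to the Burnside-ring class $\tr^{G_W}_{G_V}(1)$.

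That step is doable but not quite as immediate as your sketch suggests. Two cautions. First, with the minimal $G$-CW structures you describe, $g$ is not cellular on the nose (it does not collapse the $1$-skeleton of $S^V$ to a point in $S^W$), so you must either subdivide or homotope $g$ to a cellular map before reading off the induced map on top cells. Second, knowing only the underlying degree of $g$ does not pin down the Burnside-ring class, since $\res^{G_W}_{G_V}\colon\A(G_W)\to\A(G_V)$ is not injective; you will need an additional check, e.g.\ computing the induced map after taking $H$-fixed points for $G_V\subsetneq H\subset G_W$ (where $(S^V)^H=S^0$ forces the corresponding mark of $g_\sharp(u_V)$ to vanish), or equivalently verifying that on top cells the cellularized $g$ induces the fold map $(G_W/G_V)_+\wedge S^2\to S^2$, which gives $\tr^{G_W}_{G_V}$ on $\pi_2(-\wedge H\A)$. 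Once you make one of these precise, your argument is complete.
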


\addtocontents{toc}{\SkipTocEntry}
\subsection*{Positive and negative cone} We conclude the paper with computations of the positive cone and negative cone for any $G$-Mackey functor $\M$. This serves as a presentation how the induction method can be used for calculations. In particular, we provide descriptions of both cones in the case of $H\Z$ in Theorem \ref{thm pos cone HZ} and Theorem \ref{thm neg cone HZ}, as well as the positive cone of $H\A$ in Theorem \ref{thm pos cone HA}.

\addtocontents{toc}{\SkipTocEntry}
\subsection*{Related work} Computations of coefficients of Eilenberg-MacLane spectra over cyclic 2-groups have long history. For the group $C_2$, basing on an unpublished work of Stong, Lewis computed $H\A^{C_2}_\star$, where $\underline{\mathbb{A}}$ is the Burnside Mackey functor, in \cite[Section 2]{MR979507}. Calculations for the constant Mackey functor $\underline{\mathbb{F}}_2$ by Caruso can be found in \cite{MR1684248} and by Hu-Kriz in \cite[Proposition 6.2]{MR1808224}. The computations for the constant Mackey functor $\underline{\mathbb{Z}}$ may be found in Dugger's work \cite[Appendix B]{Duggerthesis}. The full $RO(C_2)$-graded Mackey functor valued coefficients of $H\M$ for any Mackey functor $\M$ was given by Ferland in \cite{MR2699528} and may also be found in Ferland-Lewis \cite[Chapter 8]{MR2025457}. Using the method based on the Tate square as developed by Greenlees-May in \cite{GM95}, Greenlees computed the coefficients of $H\underline{\ZZ}$ in \cite{FourApproaches}.  Basing on this work, the first author computed coefficients of $H\M$ for any $C_2$-Mackey functor $\M$ with an insight into multiplicative structure in \cite{Sikora22}.

The computations for cyclic $2$-groups of higher order were focused so far on the cases of $\underline{\mathbb{Z}}$ and $\underline{\mathbb{F}}_2$. For the first case, partial description is provided by Hill-Hopkins-Ravenel in \cite{HHRb}. The full structure of $RO(G)$-graded ring $H\underline{\mathbb{Z}}^{C_4}_\star$ was computed by Nick Georgakopoulos in \cite{NickG}. The second author computed the same ring using the Tate square method in \cite{Yan22HZ}. In \cite{Yan23HF2}, he also computed the ring structure of coefficients of $H\underline{\mathbb{F}}_2$ over any cyclic $2$-group. Using Tate square methods, the coefficients of $H\underline{\ZZ}$ over groups $C_{p^2}$ with $p$ odd are computed by Zeng in \cite{Zeng17}. The work of Ayala, Mazel-Gee and Rozenblyum \cite{DerivedCpn} provides an $\infty$-categorical calculation in the odd primary case.

\subsection{Notation and conventions}
\label{subsec notations}
Unless indicated otherwise, $G=C_{2^k}$ for $k\geq 1$, a cyclic group of order a power of $2$. By $G'$ we will denote the subgroup of $G$ of the index $2$. A general Mackey functor over $G$ will be denoted by $\M$, and a general Green functor will be denoted by $\R$.

For a Mackey functor $\M$, we denote by $H\M_{\star}:=\pi_{\star}(H\M)$, the Mackey functor valued $RO(G)$-graded homotopy groups of the $G$-spectrum $H\M$. By $H\M^H_V$ we mean the value of this  Mackey functor at $G/H$-level. Structural maps of the Mackey functor $H\M_V$ for a given representation $V$ will be denoted by $\res(V)^H_K$ and $\tr(V)^H_K$. When calculating $G$-homotopy groups of $H\M$, the structural maps of $\M$ will be denoted by $\res^H_K$ and $\tr^H_K$ for $K\subset H$. 

Let $\R$ be a general Green functor. Then its associated Eilenberg-MacLane spectrum is a ring spectrum. In particular, there is a unit map $\eta\colon S^0\to H\R$ and multiplication map $\mu:H\R\wedge H\R\to H\R$.

If $x\in\mathbb{Z}[G]$ and $M$ is a $\mathbb{Z}[G]$-module, by $\prescript{}{x}M$ we denote the submodule of $x$-torsion elements, ie,
\[
\prescript{}{x}M:=\{m\in M\;|\;xm=0\}.
\]

In the more concrete computations of the positive and negative cones, we adopt the following notational conventions as in \cite{Zeng17}, \cite{Yan22HZ} and \cite{Yan23HF2}: $[c]$ means a polynomial generator while $\langle a,b\rangle$ means additive generators. For example, $\ZZ/2[c]\langle a,b\rangle$ means we have elements of the form $c^ia,c^ib, i\geq 0$ all of whom are 2-torsion. By $\ZZ/2\langle a^i\rangle_{i\geq 1}\langle b^j\rangle_{j\geq 1}$ we mean that there are elements of the form $a^ib^j,i,j\geq 1$ all of whom are 2-torsion. The notation $x^{-1}$ means divisibility, ie, $x^{-1}y$ (or equally $\frac{y}{x}$) means an element that satisfies $x\cdot{x^{-1}y}=y$. The notation $[\frac{1}{x}]\langle z\rangle$ means we have the elements $z,\frac{z}{x},\frac{z}{x^2},\frac{z}{x^3},\cdots$. Note some elements of the form $2^ia$. In some cases they are just formal symbols, not 2 times of an actual element $a$. The number $2^i$ in $2^ia$ means that its image under $\res^G_e$ (equally its image under the Borel completion map $H\M\to F(EG_+,H\M)$) is $2^i$ times an actual class $a$.

\subsection{General assumption}
\label{General assumption}
Our methods work for general Mackey functors $\M$ and Green functors $\R$ over the group $C_{p^k}$. For a simpler $RO(G)$-grading, in the whole paper we will work $p$-locally. See Section \ref{sec rog grading} for more details on the grading. 

We note here that if $\M$ (or $\R$) has the property that $\M(G/H)$ is a finitely generated abelian group for each subgroup $H\subset G$, it can be argued using equivariant K\"{u}nneth and universal coefficient spectral sequences \cite{LM06} that $H\M^H_V$ is also finitely generated for each $V\in RO(G)$. Therefore integral information can be completely recovered from local information. This includes the most interesting cases: the Burnside Green functor $\A$, the constant Mackey functors $\underline{A}$ for $A$ a finitely generated abelian group, $\underline{RO(G)}$ and $\underline{R(G)}$, the real and complex representation Green functors respectively. Note that in the $\underline{A}$ case, $p$-localization is not needed for a simpler grading by a theorem of Hu-Kriz, see \cite[Proposition 4.25]{Zeng17}.

\subsection*{Acknowledgement} The first named author would like to thank Krakow University of Economics, who supported this work under POTENCJAŁ program, financed by a subsidy granted to the University. Both authors want to extend their thanks to the anonymous referee, whose deep insights and thorough review highly increased the readability and quality of the paper.

\section{Basics\label{basics}}
In this section we first introduce the basic notions which will be used in our computations. Then we completely determine the cellular chain and cochains of a representation sphere $S^V$. Using this result, we define Euler and orienations classes and discuss commutativity properties among them.
\subsection{The \texorpdfstring{$RO(G)$}{}-grading}
\label{sec rog grading}
Irreducible real representations of the group $\ck{k}$ are as follows:
\begin{enumerate}
\item The 1-dimensional trivial representation. We will denote this representation by 1.
\item The 1-dimensional sign representation, denoted by $\alpha$.
\item 2-dimensional representations $\lambda(m)$ for $1\leq m \leq 2^k-1$ and $m\neq 2^{k-1}$. The action is given by rotation by an angle $\frac{2m\pi}{2^k}$.
\end{enumerate}
Irreducible real representations of the group $C_{p^k}$ for odd $p$ are as follows:
\begin{enumerate}
\item The 1-dimensional trivial representation. We will denote this representation by 1.
\item 2-dimensional representations $\lambda(m)$ for $1\leq m \leq p^k-1$. The action is given by rotation by an angle $\frac{2m\pi}{p^k}$.
\end{enumerate}
For all primes, put $\lambda_j:=\lambda(p^{k-j})$ for $1\leq j\leq k$. Note that $\lambda_1=2\alpha$ when $p=2$. Our convention in this paper on enumerating the generating representations is chosen to support the induction on the group order. Therefore it is different from the one used in Hill-Hopkins-Ravenel's work \cite{HHRa}, \cite{HHRb} and \cite{HHRc}, Zeng's work \cite{Zeng17} as well as the second author's work in \cite{Yan22HZ}, \cite{Yan23HF2} in that the ordering of the representations is reversed.

In this paper we will focus at $p=2$, because it essentially contains the corresponding odd primary results. The reason is as follows. Since we work $p$-locally, we have the stable equivalence of representation spheres $S^{\lambda(rp^i)}\simeq S^{\lambda(p^i)}$ for $(r,p)=1$, see \cite[p.26]{Zeng17}. This enables us to focus on $1,\lambda_j,1\leq j\leq k$ (and $\alpha$ for $p=2$) without losing any information. For odd $p$, the spheres involved in the computations will be $S^1$ and $S^{\lambda_j},1\leq j\leq k$. For $p=2$, the spheres involved will be $S^1,S^{\alpha}$ and $S^{\lambda_j},2\leq j\leq k$, with $S^{2\alpha}=S^{\lambda_1}$ by convention. As a consequence, any result for $p=2$ will contain the corresponding result for an odd $p$ as a special case, where we only focus on those representations with even numbers of $\alpha$ in it.

The homotopy groups $\pi_{\star}H\underline{M}$ will be graded over the following free abelian group on representations:
\[
RO(C_{2^k}):=\mathbb{Z}\{1,\alpha,\lambda_2,\cdots,\lambda_k\}
\]
which will be referred to as the \emph{$RO(G)$-grading}. More precisely, we will study the structure of the direct sum of the following abelian groups for $V$ running over elements in $RO(G)$
\begin{equation*}
H\M_V=[S^V,H\M]^G,\; V\in
\{a_0+a_1\alpha+a_2\lambda_2+\ldots+a_k\lambda_k\;|\;x,y,a_i\in\mathbb{Z}\textrm{ for all }2\leq i\leq k\}.
\end{equation*}
Here $[-,-]^G$ denotes the homotopy class of maps in genuine $G$-spectra, $\M$ is a Mackey functor over $G$ and $H\M$ is the associated Eilenberg-MacLane spectrum.

\begin{definition}
\label{def representations}
\leavevmode
\begin{enumerate}
\item  An element of $RO(G)$ is called a \emph{virtual representation}. If $V=a_0+a_1\alpha+\sum_{i=2}^k a_i\lambda_i$ such that $a_i\geq 0$ for all $0\leq i\leq k$, we will refer to it as an \emph{(actual) $G$-representation}.  
\item The \emph{total dimension/degree (underlying dimension/degree)} of $V=a_0+a_1\alpha+\sum_{i=2}^k a_i\lambda_i$ is defined to be the sum $|V|=a_0+a_1+2\sum_{i=2}^ka_i$.
\item The \emph{fixed dimension} of $V$ as above is the number $a_0$.
\item For $V\in RO(G)$ with $V^G=0$, let ${\lambda_{min}}(V)$ and ${\lambda_{max}}(V)$ be the first and last irreducible $G$-representations that appear in $V$ with non-zero coefficients in the list $\alpha,\lambda_2,\lambda_3,\cdots,\lambda_k$. When $V$ is clear, we simply use ${\lambda_{min}}$ and ${\lambda_{max}}$. If $V$ is an actual $G$-representation, let $G_V$ be its kernel $\{g\in G\;|\;gv=v\textrm{ for all }v\in V\}$. Note in fact we have $G_V=G_{\lambda_{max}}$. For a general $V\in RO(G)$, define $G_V=G_{\lambda_{max}}$.
\item An actual $G$-representation $V$ is \emph{orientable} if the action of $G$ on $V$ factors through $SO(|V|)$. A virtual $G$-representation $V$ is orientable if it can be written as $V=V'-V''$ such that both $V'$ and $V''$ are orientable representations.
\end{enumerate}
\end{definition}

\begin{definition}
\label{def zeta elements}
For $1\leq j\leq k$ we define the following elements of $\mathbb{Z}[G]$: 
\[
\zeta_j:=\sum_{0\leq i < 2^j}\gamma^i\,\text{and}\,\,\overline{\zeta}_j:=\sum_{0\leq i < 2^j}(-\gamma)^i,
\]
where $\gamma$ is the generator of $G$. We also put $\zeta_0=1$.
\end{definition}
\begin{remark}\label{rem zeta elements}
(1) Note that the element $\zeta_j$ gives the norm element in $\mathbb{Z}[G/C_{2^{k-j}}]$ via the quotient map $\mathbb{Z}[G]\to \mathbb{Z}[G/C_{2^{k-j}}]$. In particular, $\zeta_k$ is the norm element in $\mathbb{Z}[G]$ and will also be denoted by $N$. These elements satisfy the following equations:
\begin{align*}
\zeta_j(1-\gamma)&=1-\gamma^{2^j}\\
\overline{\zeta}_j(1+\gamma)&=1-\gamma^{2^j}.
\end{align*}
(2) For an odd prime $p$, we define the elements $\zeta_j$ in the same way, with $2$ replaced by $p$. In this case, all representations are orientable, thus we do not need an analogue of $\overline{\zeta}_j$ for the differentials in Theorem \ref{keythm}.
\end{remark}

\subsection{Cellular chains and cochains of representation spheres}
In this subsection we will describe the cellular structure of the representation spheres. We begin with recalling the \emph{shearing isomorphism}, which will be used extensively throughout the paper.

\begin{proposition}[Shearing isomorphism, {\cite{Alaska}}, {\cite[Section 4]{SchwedeLecture}}]
\label{shearing}
    Let $G$ be a group and $H$ its subgroup. Then for an $H$-space $B$ and $G$-space $A$ there is a $G$-homeomorphism:
    \[
    (G_+\wedge_H B)\wedge A\cong G_+\wedge_H(B\wedge \res^G_H A),\quad (g,b,a)\mapsto (g,b,g^{-1}a).
    \]
In particular, if $B=S^0$ and $A=S^V$ for some $G$-representation $V$, we have that
$
G/H_+\wedge S^V\cong G_+\wedge_H S^{\res^G_H V}. 
$
\end{proposition}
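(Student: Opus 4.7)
The plan is to exhibit an explicit $G$-equivariant continuous bijection together with its continuous inverse; all topological content is automatic from the quotient topology on the half-smash products, so the real work is bookkeeping of the group actions. The key observation motivating the formula is that the two sides carry the $G$-action in quite different ways: on the left, $G$ acts diagonally on $(G_+\wedge_H B)\wedge A$ with the $A$-factor transforming independently of the coset representative, whereas on the right $G$ acts only through the outer $G_+\wedge_H(-)$, so to embed $A$ inside $i^\ast A$ one must ``pre-compose'' with the inverse of the coset representative.

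First I would define the candidate map
\[
\phi\colon (G_+\wedge_H B)\wedge A\longrightarrow G_+\wedge_H(B\wedge i^\ast A),\qquad [g,b]\wedge a\longmapsto [g,\,b\wedge g^{-1}a],
\]
and verify that it descends through the half-smash quotient: replacing the representative $(g,b)$ by $(gh,h^{-1}b)$ for $h\in H$ sends the image to $[gh,\,h^{-1}b\wedge (gh)^{-1}a]=[gh,\,h^{-1}(b\wedge g^{-1}a)]$, which again represents the class $[g,\,b\wedge g^{-1}a]$ in $G_+\wedge_H(B\wedge i^\ast A)$. Next I would verify $G$-equivariance, a one-line check using $(\gamma g)^{-1}(\gamma a)=g^{-1}a$, and then define the candidate inverse
\[
\psi\colon G_+\wedge_H(B\wedge i^\ast A)\longrightarrow (G_+\wedge_H B)\wedge A,\qquad [g,\,b\wedge a]\longmapsto [g,b]\wedge ga,
\]
checking the analogous conditions in the same way. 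The composites $\phi\psi$ and $\psi\phi$ reduce to the identity directly on representatives, so $\phi$ is the desired $G$-homeomorphism.

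For the concrete consequence, I would specialize to $B=S^0$ and $A=S^V$. Using $G_+\wedge_H S^0\cong G/H_+$ and $S^0\wedge S^V\cong S^V$, the general isomorphism becomes $G/H_+\wedge S^V\cong G_+\wedge_H S^{\res^G_H V}$, and the notation $G/H_+\wedge S^{\res^G_H V}$ on the right of the stated formula is to be read as this half-smash (the $G$-action comes from the half-smash quotient, not from a literal diagonal action, which would be ill-defined on an $H$-representation sphere). The only real obstacle is keeping the $G$- and $H$-actions straight across all three factors; the twist $a\mapsto g^{-1}a$ in the formula for $\phi$ is precisely what makes the bookkeeping cohere, and once it is written down the verification is entirely routine.
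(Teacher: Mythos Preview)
Your proof is correct and is the standard argument; the paper itself does not supply a proof at all but simply cites \cite{Alaska} and \cite[Section~4]{SchwedeLecture} for this well-known fact. The explicit formulas you give for $\phi$ and $\psi$, with the twist $a\mapsto g^{-1}a$, are exactly what one finds in those references, so there is nothing to compare.
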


We now move to the description of the cellular structure of representation spheres. As observed in \cite{HHRc}, the representation sphere $S^V$ for an actual representation $V=y\alpha+\sum_{i=2}^k a_i\lambda_i$ has a simple cellular structure, since the stabilizers has a linear order. Consider $S^{\lambda_i+\lambda_j}$ with $i<j$. Recall that $\lambda_1=2\alpha$. The sphere $S^{\lambda_j}$ has a cellular structure
\begin{equation}
    \xymatrix{
    S^0\ar[rr] &&S^{\lambda_j}_{(1)}\ar[rr]\ar[ld]&&S^{\lambda_j}\ar[ld]\\
    &{G/G_{\lambda_j}}_+\wedge S^1\ar@{-->}[ul] &&{G/G_{\lambda_j}}_+\wedge S^2\ar@{-->}[ul]
    }\label{1}
\end{equation}
where $X_{(m)}$ denotes the $m$-skeleton of a $G$-CW complex. Since $i<j$, we have that $G_{\lambda_j}\subset G_{\lambda_i}$, and 
\[
{G/G_{\lambda_j}}_+\wedge S^{\lambda_i}\cong{G/G_{\lambda_j}}_+\wedge S^2
\]
by the shearing isomorphism (Proposition \ref{shearing}). Smashing $S^{\lambda_i}$ with (\ref{1}), and concatenating with the cellular structure on $S^{\lambda_i}$, we get the cellular filtration of $S^{\lambda_i+\lambda_j}$ as
\begin{equation}
    \label{2}
\begin{tikzcd}[column sep = tiny]
    S^0\ar[rr] &&S^{\lambda_i}_{(1)}\ar[rr]\ar[ld]&&S^{\lambda_i}\ar[ld]\ar[rr]&&S^{\lambda_i}\wedge S^{\lambda_j}_{(1)}\ar[rr]\ar[ld]&&S^{\lambda_i+\lambda_j}\ar[ld]\\
    &{G/G_{\lambda_i}}_+\wedge S^1\ar[ul,dashed] &&{G/G_{\lambda_i}}_+\wedge S^2\ar[ul,dashed] &&{G/G_{\lambda_j}}_+\wedge S^3\ar[ul,dashed] &&{G/G_{\lambda_j}}_+\wedge S^4\ar[ul,dashed]
\end{tikzcd}
\end{equation}

This easily generalizes to a general $G$-representation $V=a_1\alpha+\sum_{i=2}^k a_i\lambda_i$. We record it as a lemma:

\begin{lemma}
For an actual $G$-representation $V=a_1\alpha+\sum_{i=2}^k a_i\lambda_i$, the sphere $S^V$ has a cellular structure that starts with $S^0,{G/G_{\lambda_{min}}}_+\wedge S^1$ and ends with ${G/G_{\lambda_{max}}}_+\wedge S^{|V|}$. It has one equivariant cell in each dimension and the stabilizers are in descending order.\label{CellularLemma}
\end{lemma}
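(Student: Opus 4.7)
The statement is essentially a bookkeeping generalization of diagram~(\ref{2}), which handles the two-summand case, so the plan is to proceed by induction that mimics that construction verbatim.

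First I would order the irreducible summands. Write $V = U_1 + U_2 + \cdots + U_n$ where each $U_i \in \{\alpha, \lambda_2, \ldots, \lambda_k\}$ and the indices of the $U_i$ are weakly non-decreasing in the list $\alpha, \lambda_2, \ldots, \lambda_k$. Because the kernels $G_{\alpha} \supset G_{\lambda_2} \supset \cdots \supset G_{\lambda_k} = e$ form a linear chain, this ordering makes $G_{U_1} \supseteq G_{U_2} \supseteq \cdots \supseteq G_{U_n}$, with $U_1 = \lambda_{min}(V)$ and $U_n = \lambda_{max}(V)$. The induction will then run on $n$ (equivalently on $|V|$).

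The base case $n = 1$ is just the cellular structures of $S^\alpha$ (namely $S^0 \to G/G_{\alpha +}\wedge S^1 \to S^\alpha$) and $S^{\lambda_j}$ recorded in diagram~(\ref{1}). For the inductive step, set $W := U_1 + \cdots + U_{n-1}$ and assume $S^W$ already admits a filtration of the required form: one equivariant cell $G/H_{m+} \wedge S^m$ in each dimension $0 \leq m \leq |W|$, with $H_0 = G$, $H_1 = G_{\lambda_{min}(W)} = G_{\lambda_{min}(V)}$, and with the $H_m$ weakly descending. To obtain $S^V = S^W \wedge S^{U_n}$, I would smash $S^W$ with the cellular filtration of $S^{U_n}$ and concatenate, exactly as in (\ref{2}). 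Each newly attached cell has the form $S^W \wedge G/G_{U_n +} \wedge S^r$ for $r = 1$ or $r = 1, 2$, and here the shearing isomorphism (Proposition~\ref{shearing}) gives
\[
S^W \wedge G/G_{U_n +} \wedge S^r \;\cong\; G/G_{U_n +} \wedge S^{r + \res^G_{G_{U_n}} W}.
\]
Because $G_{U_n} \subseteq G_{U_i}$ for every $i < n$ by our ordering, the subgroup $G_{U_n}$ acts trivially on every summand of $W$, so $\res^G_{G_{U_n}} W$ is a trivial $G_{U_n}$-representation of dimension $|W|$. Hence the new cell is $G/G_{U_n +} \wedge S^{|W| + r}$, an equivariant cell in the correct dimension with stabilizer $G_{U_n} = G_{\lambda_{max}(V)}$, which is indeed no larger than any previous stabilizer. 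A quick count $|W| + 1 + \dim(U_n) = |V| + 1$ shows we get exactly one cell per dimension $0, \ldots, |V|$, and the extremal cells are the asserted ones.

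The only mildly delicate point — and what I would flag as the main thing to get right — is justifying that $\res^G_{G_{U_n}} W$ is genuinely trivial, which is exactly where the linear order of the kernels $G_{\lambda_j}$ in a cyclic $2$-group is used; without that chain property one could not guarantee that the attaching cells along the way remain of the form $G/H_+ \wedge S^m$ with a single orbit type. No further obstacle arises: after setting up the ordering the argument is a direct generalization of diagram~(\ref{2}).
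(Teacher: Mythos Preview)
Your proposal is correct and is exactly the argument the paper has in mind: the paper gives the two-summand case in diagram~(\ref{2}) and then says ``This easily generalizes,'' recording the result as Lemma~\ref{CellularLemma} without further proof. Your induction on the number of irreducible summands, using the shearing isomorphism and the linear ordering of kernels $G_{\alpha}\supset G_{\lambda_2}\supset\cdots\supset G_{\lambda_k}$, is precisely that generalization made explicit.
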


We will now state the theorem which will be fundamental for many computations in this paper. Let $V$ be an actual $G$-representation with $V^G=0$. Let $S^V$ be equipped with the cellular structure as in the above lemma. For a better presentation of the result, write $V=V_0+V_1+\cdots+V_n$ with $V_0\in \{0,\alpha\}$, $V_i\in \{\lambda_1,\cdots,\lambda_k\}$ for $1\leq i\leq n$ and that the kernels of the $V_i$'s are in descending order. Note that $V_0=0$ if $V$ is orientable and $V_0=\alpha$ if $V$ is non-orientable. Since $S^V$ has a canonical base point, the point at infinity, all cellular chains and cochains are reduced in this paper.

\begin{theorem}\label{keythm}
Let $G=C_{2^k}$ and let $V$ be as above. Let $s_i=|V_0+\cdots+V_i|,i\geq 0$. Then the (reduced) cellular chain complex $C_*^G(S^V;\M)$ is as follows:
    \begin{enumerate}
        \item If $V$ is orientable, then $C_0^G=\M(G),C^G_{s_i}=C^G_{s_i-1}=\M(G_{V_i})$ for $i\geq 1$. For the differentials, we have for $i\geq 1$,
        \begin{equation*}
            \begin{aligned}
                &\partial_{s_i}=1-\gamma,\\
                &\partial_{s_i-1}=\zeta_{\log_2|G/G_{V_{i-1}}|}\tr^{G_{V_{i-1}}}_{G_{V_i}}.
            \end{aligned}
        \end{equation*}
        \item If $V$ is non-orientable, then $C_0^G=\M(G),C_1^G=\M(G')$ and $C_{s_i}^G=C_{s_i-1}^G=\M(G_{V_i})$ for $i\geq 1$. For the differentials, we have for $i\geq 1$,
        \begin{equation*}
            \begin{aligned}
                &\partial_{s_i}=1+\gamma,\\
                &\partial_{s_i-1}=\overline{\zeta}_{\log_2|G/G_{V_{i-1}}|}\tr^{G_{V_{i-1}}}_{G_{V_i}}.
            \end{aligned}
        \end{equation*}
    \end{enumerate}
    The (reduced) cellular cochain complex $C^*_G(S^V;\M)$ is as follows:
    \begin{enumerate}
        \item If $V$ is orientable, then $C^0_G=\M(G),C_G^{s_i}=C_G^{s_i-1}=\M(G_{V_i})$ for $i\geq 1$. For the differentials, we have for $i\geq 0$,
    \begin{equation*}
            \begin{aligned}
            &\partial^{s_i+1}=1-\gamma,\\
            &\partial^{s_i}=\zeta_{\log_2|G/G_{V_i}|}\res^{G_{V_i}}_{G_{V_{i+1}}}.
            \end{aligned}
        \end{equation*}
        \item If $V$ is non-orientable, then $C^0_G=\M(G),C^1_G=\M(G')$ and $C^{s_i}_G=C^{s_i-1}_G=\M(G_{V_i})$ for $i\geq 1$. For the differentials, we have $\partial^0=\res^G_{G'}$, and for $i\geq 0$,
        \begin{equation*}
            \begin{aligned}
            &\partial^{s_i+1}=1+\gamma,\\
            &\partial^{s_i}=\overline{\zeta}_{\log_2|G/G_{V_i}|}\res^{G_{V_i}}_{G_{V_{i+1}}}.\\
            \end{aligned}
        \end{equation*}
    \end{enumerate}
    If $G=C_{p^k}$ for $p$ odd, then all representations are orientable. The results on the cellular chain and cochain still hold, with $\zeta_{\log_2|G/G_{V_i}|}$ replaced by its odd primary analogue $\zeta_{\log_p|G/G_{V_i}|}$, see also Remark \ref{rem zeta elements}.
\end{theorem}
\begin{proof}
We will only prove the part for chains. The proof for cochains is analogous. We are going to proceed by induction on the group order.

The theorem is true for $G=C_2$, where the cellular chain $C_*^{C_2}(S^{n\alpha};\M)$ is given by
    \[
    \M(G)\xleftarrow{\tr^G_e} \M(e)\xleftarrow{1-\gamma}\M(e)\xleftarrow{1+\gamma}\cdots\leftarrow\M(e),
    \]
    with the differentials interchanging between $1-\gamma$ and $1+\gamma$ except the beginning one. 

Now fix $k\geq 2$ and assume that the theorem is true for groups $C_{2^j}$ for $j<k$. Let $G=C_{2^k}$ and recall that $G'$ is the subgroup of $G$ such that $[G:G']=2$. Let $W=V_0+\cdots+V_t$ be the biggest subrepresentation of $V$ that does not contain $\lambda_k$. Note that in degrees $\leq |W|$ the complex $C_\ast^G(S^V,\M)$ is the same as $C_\ast^G(S^W;\M)$, and the latter is equal to $C_\ast^{G'}(S^W,\M^{\sharp C_2})$ (for the definition of $\M^{\sharp C_2}$, see Definition \ref{def sharp Mackey functor}). Thus by induction we need to prove the claim for the tail of $C_\ast^G(S^V,\M)$, consisting of the following terms:
\[
    \M(G_W)\leftarrow \M(e)\leftarrow \M(e)\leftarrow\cdots\leftarrow\M(e).
\]

The idea is to make use of the Mackey functor structure on cellular chains. More precisely, let $\underline{S^V}$ denote the cellular structure given in Lemma \ref{CellularLemma}, with one cell in each dimension. Then $i^*_{G'}\underline{S^V}$, the restricted cellular structure, has one $0$-cell ${G'/G'}_+$, and two cells in each higher dimension. That is, if $G/H_+\wedge S^i$ is an $i$-cell of $\underline{S^V}$ for $i\geq 1$, then the corresponding restricted cell to $i^*_{G'}\underline{S^V}$ is 
\[
i^*_{G'}(G/H_+\wedge S^i)=G'/H_+\wedge S^i\vee \gamma G'/H_+\wedge S^i.
\] 
For $H\subset G'$ there is the folding map
    \[
    G_+\wedge_{G'}i^*_{G'}{G/{H}}_+\cong {G/{G'}}_+\wedge {G/{H}}_+\to {G/{H}}_+.
    \]
Applying the contravariant part of $\M$ to this map and using additivity of a Mackey functor, we get an embedding which corresponds to $\res^G_{G'}$ in the cellular chain
    \[
    \M(H)\xrightarrow{\Delta}\mathbb{Z}[G]\otimes_{\mathbb{Z}[G']}i^*_{G'}\M(H)\cong \mathbb{Z}[G/G']\otimes \M(H).
    \]
It sends $x\in\M(H)$ to $1\otimes x+\gamma\otimes x$ in the middle term and then to $1\otimes x+\gamma\otimes \gamma x$ on the right. These embeddings collectively form a chain map because they come from the natural transformation $G/G'_+\wedge -\Rightarrow G/G_+\wedge -$ applied to the cellular filtration of $S^V$. Because of this embedding, we can regard $C_*^G(\underline{S^V};\M)$ as a sub-chain complex of $C_*^{G'}(i^*_{G'}\underline{S^V};\M)$ and are left to show that the later has the stated differentials.

Now $i^*_{G'}S^V=S^{i^*_{G'}V}$ as a representation sphere for $G'$ also has the cellular structure described in Lemma \ref{CellularLemma}. We use $\underline{i^*_{G'}S^V}$ to refer to this cellular structure. Both complexes $C_*^{G'}(\underline{i^*_{G'}S^V};\M)$ and $C_*^{G'}(i^*_{G'}\underline{S^V};\M)$ compute the homology of $S^{i^*_{G'}V}$ with coefficient in $\M\downarrow^G_{G'}$ (the restricted Mackey functor to $G'$, see Definition \ref{restricted Mackey functor}). By the inductive assumption we know how to compute the homology of the former, so this forces the differentials in the latter chain complex. Furthermore, we will construct a quasi-isomorphism 
\[
\psi\colon C_*^{G'}(\underline{i^*_{G'}S^V};\M)\xrightarrow{\simeq} C_*^{G'}(i^*_{G'}\underline{S^V};\M)
\] 
in Lemma \ref{lemma of psi}. The differentials in $C_*^{G'}(i^*_{G'}\underline{S^V};\M)$ are also determined there. Then from the embedding
    \begin{equation}
    C_n^{G}(\underline{S^V};\M)\xhookrightarrow{\Delta} C_n^{G'}(i^*_{G'}\underline{S^V};\M)\cong \mathbb{Z}[G/G']\otimes C_n^{G}(\underline{S^V};\M),\, n\geq 1, \label{embedding}
    \end{equation}
    we deduce the differentials in $C_*^{G}(\underline{S^V};\M)$.
\end{proof}

\begin{lemma}\label{lemma of psi}
There exists a quasi-isomorphism
\[
\psi\colon C_*^{G'}(\underline{i^*_{G'}S^V};\M)\xrightarrow{\simeq} C_*^{G'}(i^*_{G'}\underline{S^V};\M).
\]    
\end{lemma}

\begin{proof}
We proceed by the induction on the group order. For the group $C_2$, the subgroup $G'$ is the trivial group. Let $V=m\alpha,m\geq 1$. For $i^*_{e}S^V=S^{|m|}$, recall that we use the reduced cellular structure such that there is only one cell in dimension $m$. We have the following diagram
{\scriptsize
    \[
    \xymatrix{
    &0\ar@{-->}[d]&0\ar@{-->}[d]\ar[l]&0\ar@{-->}[d]\ar[l]&0\ar@{-->}[d]\ar[l]&\cdots\ar[l]&0\ar[l]\ar@{-->}[d]&\M(e)\ar[l]\ar@{-->}[d]^{\psi_m}\\
    &\M(e)&\ind^{C_2}_{e}\M(e)\ar[l]_-{\nabla}&\ind^{C_2}_{e}\M(e)\ar[l]_-{1-\gamma}&\ind^{C_2}_{e}\M(e)\ar[l]_-{1+\gamma}&\cdots\ar[l]&\ind^{C_2}_{e}\M(e)\ar[l]&\ind^{C_2}_{e}\M(e)\ar[l],
    }
    \]
    }
where $\psi_m$ is the $m$-th component of $\psi$. It is defined by
\begin{equation*}
    \psi_m(z)=
    \begin{cases}
        (1,\gamma )(z)=1\otimes z+\gamma\otimes \gamma z & \text{for $m$ even,}\\
        (1,-\gamma )(z)=1\otimes z-\gamma\otimes \gamma z & \text{for $m$ odd}.
    \end{cases}
\end{equation*}
The differentials in the second row are given by
\begin{equation*}
    \begin{cases}
        \nabla(1\otimes z)=z, \nabla(\gamma\otimes z)=z, \\
        (1-\gamma)(1\otimes z)=1\otimes z-\gamma\otimes \gamma z,(1-\gamma)(\gamma\otimes z)=\gamma\otimes z-1\otimes \gamma z,\\
        (1+\gamma)(1\otimes z)=1\otimes z+\gamma\otimes \gamma z,(1+\gamma)(\gamma\otimes z)=\gamma\otimes z+1\otimes \gamma z.
    \end{cases}
\end{equation*}
It is easily seen that $\psi$ is a quasi-isomorphism. 

Now let $G=C_{2^k}$ and assume that $\psi$ has been constructed for groups $C_{2^j}$ with $j<k$. Let $W$ be as in Theorem \ref{keythm} so that $G_W\neq e$. Note that $W$ can be regarded as a $G'\cong G/C_2$-representation. By induction, we can assume that the maps
\[
\psi_n:C_n^{G'}(\underline{i^*_{G'}S^W};\M)\to C_n^{G'}(i^*_{G'}\underline{S^W};\M)
\]
are constructed for $n\leq |W|$.

To construct $\psi_i$ for $i>|W|$, we are going to consider three cases. We firstly consider the case when $V_0=0$. If $W\neq 0$, suppose that $G_W=C_{2^{k-i}}$. Recall that $\zeta_j=\sum_{0\leq i < 2^j}\gamma^i$ (see \ref{def zeta elements}) and $\ind_{H}^G M=\mathbb{Z}[G]\otimes_{\mathbb{Z}[H]} M$ for $M$ a $H$-module. We are to determine the differentials in the second row and construct vertical chain maps that is a quasi-isomorphism in the following diagram:
    {\scriptsize
    \[
    \xymatrix{
    \cdots&\M(C_{2^{k-i}})\ar[l]_-{1-\gamma^2}\ar[d]^{(1,\gamma )}&\M(e)\ar@{-->}[d]\ar[l]_-{\zeta_{i-1}(\gamma^2)\tr^{C_{2^{k-i}}}_e}&\M(e)\ar@{-->}[d]\ar[l]_-{1-\gamma^2}&\M(e)\ar@{-->}[d]\ar[l]_-{\zeta_{k-1}(\gamma^2)}&\M(e)\ar@{-->}[d]\ar[l]_-{1-\gamma^2}&\cdots\ar[l]\\
    \cdots&\ind^G_{G'}\M(C_{2^{k-i}})\ar[l]_-{1-\gamma}&\ind^G_{G'}\M(e)\ar[l]&\ind^G_{G'}\M(e)\ar[l]&\ind^G_{G'}\M(e)\ar[l]&\ind^G_{G'}\M(e)\ar[l]&\cdots\ar[l].
    }
    \]
    }
Here $\zeta_j(\gamma^2)=\sum_{0\leq i < 2^j}(\gamma^2)^i$. 

Note that the induction $\ind^G_H$ is with respect to the Weyl $G/H$-action on $\M(H)$, which comes from the $1\times \gamma$ on action on $G/G'\times G/H$. On the other hand, there is a Weyl $G/G'$-action on $C_*^{G'}(i^*_{G'}\underline{S^W};\M)$, which comes from the $\gamma\times 1$ action on $G/G'\times G/H$.  We have for $H\subset G'$ 
    \[G/G'\times G/H=G/H_{(e,e)}\coprod G/H_{(\gamma,e)},\] 
    where $1\times \gamma$ send $(e,e)$ to $(e,\gamma)=\gamma(\gamma,e)$. Thus it acts as permuting the two factors with an internal twist. And $\gamma\times 1$ acts by permuting the two factors without any internal twist. We note the differentials in the second row are $\gamma\times 1$-equivariant.
    
Denote by $(1,\gamma )$ the map sending $x\in\M(C_{2^{k-i}})$ to $1\otimes x+\gamma\otimes \gamma x$, with $\M(C_{2^{k-i}})=C_{d_{k-1}}^{G'}(\underline{i^*_{G'}S^V};\M)$. To make sure that the second row has the same homology as the first row at degree $|W|$, the first unknown differential in the second row must be ${\zeta_{i}\tr^{C_{2^{k-i}}}_e}$. By further comparing higher homology, we know the following differentials must be interchanging between $1-\gamma$ and $\zeta_k$. Since the dashed arrows should be chain maps, the only possibility is that they interchange between $(1,0)$ and $(1,\gamma )$. The completed diagram is:
    {\scriptsize
    \[
    \xymatrix{
    \cdots&\M(C_{2^{k-i}})\ar[l]_-{1-\gamma^2}\ar[d]^{(1,\gamma )}&\M(e)\ar[d]^{(1,0)}\ar[l]_-{\zeta_{i-1}(\gamma^2)\tr^{C_{2^{k-i}}}_e}&\M(e)\ar[d]^{(1,\gamma )}\ar[l]_-{1-\gamma^2}&\M(e)\ar[d]^{(1,0)}\ar[l]_-{\zeta_{k-1}(\gamma^2)}&\M(e)\ar[d]^{(1,\gamma )}\ar[l]_-{1-\gamma^2}&\cdots\ar[l]\\
    \cdots&\ind^G_{G'}\M(C_{2^{k-i}})\ar[l]_-{1-\gamma}&\ind^G_{G'}\M(e)\ar[l]_-{\zeta_{i}\tr^{C_{2^{k-i}}}_e}&\ind^G_{G'}\M(e)\ar[l]_-{1-\gamma}&\ind^G_{G'}\M(e)\ar[l]_-{\zeta_{k}}&\ind^G_{G'}\M(e)\ar[l]_-{1-\gamma}&\cdots\ar[l].
    }
    \]
    }
    Here by $\gamma$ we mean the $1\times \gamma$-action as mentioned above.
    The differentials in the second row are given by
    \begin{align*}
        &({\zeta_{i}\tr^{C_{2^{k-i}}}_e})(1\otimes x)=1\otimes\zeta_{i-1}(\gamma^2)\tr^{C_{2^{k-i}}}_e(x)+\gamma\otimes\gamma\zeta_{i-1}(\gamma^2)\tr^{C_{2^{k-i}}}_e(x),\\
        &({\zeta_{i}\tr^{C_{2^{k-i}}}_e})(\gamma\otimes y)=1\otimes\gamma\zeta_{i-1}(\gamma^2)\tr^{C_{2^{k-i}}}_e(y)+\gamma\otimes\zeta_{i-1}(\gamma^2)\tr^{C_{2^{k-i}}}_e(y),\\
        &(1-\gamma)(1\otimes x)=1\otimes x-\gamma\otimes \gamma x,\\
        &(1-\gamma)(\gamma\otimes y)=\gamma\otimes y-1\otimes \gamma y\\
        &\zeta_k(1\otimes x)=1\otimes\zeta_{k-1}(\gamma^2)x+\gamma\otimes\gamma\zeta_{k-1}(\gamma^2)x,\\
        &\zeta_k(\gamma\otimes y)=1\otimes\gamma\zeta_{k-1}(\gamma^2)x+\gamma\otimes\zeta_{k-1}(\gamma^2)x.
    \end{align*}
    The vertical maps are given by
    \begin{align*}
        &(1,\gamma )(z)=1\otimes z+\gamma\otimes \gamma z,\\
        &(1,0)(z)=1\otimes z.
    \end{align*}
    The embedding $(\ref{embedding})$ is induced by $*\times G/H\xleftarrow{*\times 1}G/G'\times G/H$ for $H\subset G'$. In particular, it sends $x\in \M(e)$ to $1\otimes x+\gamma\otimes x\in\ind^G_{G'}\M(e)$, from where we deduce all the differentials of $C_*^{G}(\underline{S^V};\M)$.

    For $V_0=0$ and $W=0$, that is $S^V=S^{a_k\lambda_k}$, we start from
    \[
    \xymatrix{
    \M(G')\ar[d]^1&\M(e)\ar[l]_-{\tr^{G'}_e}\ar@{-->}[d]^{(1,0)}&\M(e)\ar@{-->}[d]^{(1,\gamma )}\ar[l]_-{1-\gamma^2}&\M(e)\ar@{-->}[d]^{(1,0)}\ar[l]_-{\zeta_{k-1}(\gamma^2)}&\cdots\ar[l]\\
    \M(G')&\ind^G_{G'} \M(e)\ar[l]_-{\tr^{G'}_e(1\oplus\gamma )}&\ind^G_{G'} \M(e)\ar[l]&\ind^G_{G'} \M(e)\ar[l]&\cdots\ar[l]
    }
    \]
    and deduce the differentials and chain maps as in the previous case. Here the map $\tr^{G'}_e(1\oplus\gamma)$ sends $1\otimes x+\gamma\otimes y$ to $\tr^{G'}_e(x+\gamma y)$. It is deduced from the map $G/G'\times *\xleftarrow{1\times *} G/G'\times G/e$.
    
    Now for the $V_0=\alpha$   case. Assume $G_W=C_{2^{k-i}}$, then it is contained in $G'$. Using the same method, we deduce the completed diagram
    {\scriptsize
    \[
    \xymatrix{
    \cdots&\M(C_{2^{k-i}})\ar[l]_-{1+\gamma^2}\ar[d]^{(1,-\gamma )}&\M(e)\ar[d]^{(1,0)}\ar[l]_-{\overline{\zeta}_{i-1}(\gamma^2)\tr^{C_{2^{k-i}}}_e}&\M(e)\ar[d]^{(1,-\gamma)}\ar[l]_-{1+\gamma^2}&\M(e)\ar[d]^{(1,0)}\ar[l]_-{\overline{\zeta}_{k-1}(\gamma^2)}&\M(e)\ar[d]^{(1,-\gamma )}\ar[l]_-{1+\gamma^2}&\cdots\ar[l]\\
    \cdots&\ind^G_{G'}\M(C_{2^{k-i}})\ar[l]_-{1+\gamma}&\ind^G_{G'}\M(e)\ar[l]_-{\overline{\zeta}_i\tr^{C_{2^{k-i}}}_e}&\ind^G_{G'}\M(e)\ar[l]_-{1-\gamma}&\ind^G_{G'}\M(e)\ar[l]_-{\zeta_k}&\ind^G_{G'}\M(e)\ar[l]_-{1-\gamma}&\cdots\ar[l].
    }
    \]
    }
    where $(1,-\gamma )(z)=1\otimes z-\gamma\otimes \gamma z$.

\end{proof}

\begin{remark}\label{remark Mackey functor from cellular chain}
    We have also showed how to deduce the homology Mackey functors using the cellular chains $C_*^{H}(\underline{i^*_H S^V};\M)$ for different $H\subset G$. Note that these chains have one cell in each dimension, therefore are easier to compute. Since restrictions and transfers are transitive, we can take $H=G'$. The maps $\Delta$ and $\nabla$ in the following diagram are usually used to deduce $\res^G_{G'}$ and $\tr^G_{G'}$:
    \[
    \xymatrix{
    C_*^{G}(\underline{S^V};\M)\ar@/^/[rd]^{\Delta}\ar@{-->}[d]|-{\times}\\
    C_*^{G'}(\underline{i^*_{G'}S^V};\M)\ar[r]_{\psi}& C_*^{G'}(i^*_{G'}\underline{S^V};\M)\ar@/^/[ul]^{\nabla}.
    }
    \]
    It is not obvious that there exist restrictions and transfers between the two chains of abelian groups on the left. In fact, there is no chain map in the direction of the dashed arrow that makes the triangle commute. However, since $H_*(\psi)$ is an isomorphism, we can deduce the restrictions and transfers between the homology of the two chains on the left by first computing $H_*(\Delta)$ and $H_*(\nabla)$.
\end{remark}

\begin{remark}
    The motivations for the differentials in Theorem \ref{keythm} are simple: they are the universal formulas for all Mackey functors, thus they should satisfy the least amount of restrictions that make the cellular chain/cochain complexes.
\end{remark}

\begin{example}
    The chain complex $C_*(S^{\alpha+2\lambda_k};\M)$ is given by
    \begin{equation*}
        \begin{aligned}
            &0\leftarrow\M(G)\xleftarrow{\tr^G_{G'}}\M(G')\xleftarrow{(1-\gamma)\tr^{G'}_e}\M(e)\xleftarrow{1+\gamma}\M(e)\xleftarrow{\overline{\zeta}_k}\M(e)\xleftarrow{1+\gamma}\M(e).
        \end{aligned}
    \end{equation*}
\end{example}

\begin{example}
    The cochain complex $C^*(S^{2\alpha+2\lambda_2+2\lambda_3};\M)$ with $k\geq 3 \text{ in } C_{2^k}$ is given by
    \begin{equation*}
        \begin{aligned}
            &0\to \M(G)\xrightarrow{\res^G_{G'}}\M(G')\xrightarrow{1-\gamma}\M(G')\xrightarrow{(1+\gamma)\res^{G'}_{C_{2^{k-2}}}}\M(C_{2^{k-2}})\xrightarrow{1-\gamma}\M(C_{2^{k-2}})\xrightarrow{\zeta_2}\M(C_{2^{k-2}})\xrightarrow{1-\gamma}\M(C_{2^{k-2}})\\
            &\xrightarrow{\zeta_2\res^{C_{2^{k-2}}}_{C_{2^{k-3}}}}\M(C_{2^{k-3}})\xrightarrow{1-\gamma}\M(C_{2^{k-3}})\xrightarrow{\zeta_3}\M(C_{2^{k-3}})\xrightarrow{1-\gamma}\M(C_{2^{k-3}}).
        \end{aligned}
    \end{equation*}
\end{example}

\subsection{Induction method}
In this subsection we will describe the induction method, which enables us to reduce computations to smaller groups when $G_V\neq e$.

\begin{definition}
\label{def sharp Mackey functor}
    Let $\M$ be a $G$-Mackey functor and let $H\leq G$. Then an \emph{$H$-truncation of $\M$} is the $G/H$-Mackey functor $\M^{\sharp H}$ defined by
    \[
\M^{\sharp H}(K/H):= \M(K)
    \]
for $H\leq K\leq G$. The transfers, restrictions, and Weyl group actions are the respective structural maps of $\M$.
\end{definition}
\begin{remark}
    Intuitively, the $H$-truncation of $\M$ amounts to forgetting all of the information below the $H$-level of the Mackey functor $\M$.
\end{remark}
\begin{example}
    Let $\A(C_4)$ be the Burnside Mackey functor over $C_4$. Then $\A(C_4)^{\sharp C_2}$ is the $C_4/C_2$-Mackey functor
    \[
\begin{tikzcd}
    \mathbb{A}(C_4)\dar[bend right,"\res_{\A}"'] \\
    \mathbb{A}(C_2).\uar[bend right,"\tr_{\A}"']
\end{tikzcd}
    \]
\end{example}

The basis for retrieving $G$-information on $H\M^G_\star$ from the $H$ and $G/H$ computations is the following result:
\begin{proposition}
    Let $H\leq G$ and let $(H\M)^H$ be the $H$-categorical fixed points spectrum of $H\M$. Then there is the following equivalence of $G/H$-spectra:
    \[
    (H\M)^H\simeq H(\M^{\sharp H}).
    \]
\end{proposition}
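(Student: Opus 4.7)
The plan is to characterize $(H\M)^H$ through its $G/H$-equivariant homotopy Mackey functors. A $G/H$-Eilenberg–MacLane spectrum is determined up to weak equivalence by its underlying Mackey functor in degree zero, so it suffices to verify that $(H\M)^H$ has homotopy concentrated in degree zero and that in that degree it recovers $\M^{\sharp H}$.

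The key input is the standard composition law for categorical fixed points: for any $G$-spectrum $X$ and any intermediate subgroup $H \leq K \leq G$, there is a natural equivalence of (nonequivariant) spectra $(X^H)^{K/H} \simeq X^K$. Taking $X = H\M$, applying $\pi_n$, and using the defining property of $H\M$, we obtain
\[
\upi_n((H\M)^H)(K/H) \;\cong\; \pi_n^K(H\M) \;=\; \begin{cases} \M(K) & n = 0, \\ 0 & n \neq 0. \end{cases}
\]
Hence the homotopy of $(H\M)^H$ is concentrated in degree zero with the expected underlying abelian groups at each level $K/H$ of the $G/H$-orbit category.

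It remains to identify the Mackey functor structure. For $H \leq L \leq K$, the restrictions and transfers on $\upi_0((H\M)^H)$ between levels $K/H$ and $L/H$ are induced, via the identifications above, by the corresponding maps $\res^K_L$ and $\tr^K_L$ of $\upi_0 H\M = \M$ between levels $K$ and $L$; analogously, the Weyl action at level $K/H$ factors through $N_G(K)/K$ in the evident way. These are precisely the structural maps defining $\M^{\sharp H}$, so $\upi_0((H\M)^H) \cong \M^{\sharp H}$ as $G/H$-Mackey functors, and the equivalence $(H\M)^H \simeq H(\M^{\sharp H})$ then follows from the uniqueness of Eilenberg–MacLane spectra.

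The only delicate ingredient is the composition law $(X^H)^{K/H} \simeq X^K$ and its naturality in $X$, which requires a well-behaved point-set model of equivariant spectra (orthogonal $G$-spectra, for instance); in any such model this is classical, and nothing beyond this formal fact is specific to $H\M$. The rest of the argument is a direct unwinding of definitions.
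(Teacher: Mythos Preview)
Your proof is correct and follows essentially the same approach as the paper: both use the composition law $[S^n,(H\M)^H]^{K/H}\cong [S^n,H\M]^K$ to identify $\upi_n((H\M)^H)$ with $\upi_n(H\M)$ at the appropriate levels, and then invoke the characterization of Eilenberg--MacLane spectra. Your version is more explicit about the Mackey structure and the point-set caveat, but the core argument is identical.
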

\begin{proof}
    Comes from the following chain of isomorphsims, for $H\leq K\leq G$:
    \[
    \pi^{K/H}_n(H\M^H)=[S^n,H\M^H]^{K/H}\cong [S^n,H\M]^K =\pi^K_n(H\M).
    \]
    For $n=0$, we then obtain that
    \[
    \pi^{K/H}_0(H\M^H)\cong \pi^K_0(H\M)=\M(K)=\M^{\sharp H}(K/H)=\pi^{K/H}_0(H\M^{\sharp H}).
    \]
\end{proof}

Now consider a $G$-representation $V$. Since $G_V$ acts trivially on it, we can regard it as a $G/G_V$-representation. If $G_V\neq e$, then $a_k=0$ and $G/G_V$ is a cyclic $2$-group of smaller order. Then we can reduce to $G/G_V$-computations by the following corollary:
\begin{corollary}
\label{reduction to quotient group}
    Let $V$ be such that $G_V\neq e$. Then
    \[
    H\M^G_{V}\cong (H\M^{\sharp G_V})^{G/G_V}_V.
    \]
    The above isomorphism also preserves the Mackey functor structure. For $G_V\subset K\subset H$, we have 
    \[
    \res_K^H\leftrightarrow\res_{K/{G_V}}^{H/{G_V}}\quad\text{and}\quad \tr_K^H\leftrightarrow\tr_{K/{G_V}}^{H/{G_V}}.
    \]
\end{corollary}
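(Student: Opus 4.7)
The plan is to combine the preceding proposition (which gives the spectrum-level identification $(H\M)^{G_V}\simeq H(\M^{\sharp G_V})$) with the adjunction between inflation and categorical fixed points. The essential point is that since $G_V$ acts trivially on $V$, the representation sphere $S^V$ can be viewed as a $G/G_V$-spectrum pulled back along the projection $p\colon G\to G/G_V$, i.e.\ $S^V=p^*S^V$.

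First, I would fix a subgroup $G_V\subset K\subset G$ and compute using the inflation/fixed-point adjunction:
\[
\pi_V^K(H\M)=[S^V,H\M]^K\cong [S^V,(H\M)^{G_V}]^{K/G_V}=\pi_V^{K/G_V}\bigl((H\M)^{G_V}\bigr).
\]
Here the middle isomorphism uses that $S^V$ is inflated from $K/G_V$, so the counit of the adjunction supplies a natural bijection. Then invoke the preceding proposition to rewrite the right-hand side as $\pi_V^{K/G_V}(H\M^{\sharp G_V})$, which is the $K/G_V$-level of $(H\M^{\sharp G_V})_V^{G/G_V}$. Taking $K=G$ gives the stated isomorphism of abelian groups.

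To promote this to an isomorphism of Mackey functors, I would check the structural maps by naturality. For the restriction $\res^H_K$ (with $G_V\subset K\subset H$), the identification is simply the forgetful functor along the inclusion $K\hookrightarrow H$; passing through the adjunction and the isomorphism $(H/G_V)/(K/G_V)\cong H/K$, this is intertwined with $\res^{H/G_V}_{K/G_V}$. For the transfer, recall that $\tr^H_K$ is induced by smashing with the stable transfer $\Sigma^\infty_+ (H/K)\to S^0$ in $H$-spectra; since $G_V$ acts trivially on the finite set $H/K$, this transfer is also inflated from $H/G_V$-spectra, and thus matches $\tr^{H/G_V}_{K/G_V}$ after applying $(-)^{G_V}$.

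The main obstacle is really just the bookkeeping in the Mackey functor compatibility step; the spectrum-level content is essentially discharged by the preceding proposition and the standard fact that $\Sigma^\infty_+(H/K)$ for $G_V\subset K$ is inflated from $G/G_V$. Once one notes that both the collapse maps defining restrictions and the (pre-)transfer maps are inflated from the quotient, functoriality of $(-)^{G_V}$ gives the compatibility automatically.
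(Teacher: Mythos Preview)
Your proposal is correct and is essentially the argument the paper has in mind: the corollary is stated without its own proof, following immediately from the preceding proposition $(H\M)^{G_V}\simeq H(\M^{\sharp G_V})$ together with the inflation/fixed-points adjunction applied to the inflated sphere $S^V$. Your explicit check of the Mackey structure via the inflated nature of $\Sigma^\infty_+(H/K)$ just spells out what the paper leaves implicit.
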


\subsection{Classes \texorpdfstring{$a_V$}{} and \texorpdfstring{$u_V$}{}}
In this section we will define the classes $a_V$ and $u_V$. We start with the definition of Euler classes $a_V$. Recall that $\M$ denotes an arbitrary Mackey functor and $\R$ is an arbitrary Green functor.
\begin{definition}
\label{def a classes}
Let $V$ be an actual irreducible $G$-representation and consider the inclusion of fixed points $i\colon S^0\to S^V$. We define the element $a_V\in H\R^G_{-V}$ to be the class of the map $S^0\xrightarrow{i}S^V\cong S^V\wedge S^0\xrightarrow{1\wedge \eta}S^V\wedge H\R$. It factors through the ring maps $S^0\to H\A\to H\R$. This class is well-known as the Euler class of $V$. Note that $a_V=0$ if $V^G\neq 0$ since $S^0\to S^V$ will be equivariantly null-homotopic.
\end{definition}

\begin{remark}
Let $W\in RO(G)$, $V$ be an irreducible representation and let $x\in H\M_W^G$. Then multiplication by the element $a_V$ is given by
\[
\begin{tikzcd}[column sep = large]
S^W\cong S^0\wedge S^W\rar["a_V\wedge x"]& S^{V}\wedge H\M,
\end{tikzcd}
\]
or
\[
\begin{tikzcd}[column sep = large]
S^W\cong S^0\wedge S^W\rar["a_V\wedge x"]& S^V\wedge H\A \wedge H\M\rar["1\wedge \mu"]& S^V\wedge H\M.
\end{tikzcd}
\]
where the second map uses the $H\A$-module structure on $H\M$.
\end{remark}

We can compute the homotopy Mackey functors at the degrees of the Euler classes:

\begin{proposition}
\label{prop mackey av}
Let $V=a_1\alpha+\sum_{i=2}^k a_i\lambda_i$ be an actual $G$-representation. Recall that $\lambda_{min}$ is the first irreducible representation with non-zero coefficient in the list $\alpha,\lambda_2,\ldots,\lambda_k$ (see Definition \ref{def representations}). Then $H\M^G_{-V}\cong \M(G)/\tr^{G}_{G_{\lambda_{min}}}$. For the Mackey functor structure, we have
\[
H\M_{-V}(H)=\left\{
\begin{array}{ll}
\coker(\tr^H_{G_{\lambda_{min}}})&\textrm{if }G_{\lambda_{min}}\subsetneq H,\\
0&\textrm{otherwise }.
\end{array}
\right.
\]
 For $K\subset H$, if $K\subset G_{\lambda_{min}}$, then $\res^H_K,\tr^H_K$ are both $0$. For $G_{\lambda_{min}}\subset K\subset H$, the maps $\res^H_K$ and $\tr^H_K$ are induced from that of $\M$.
\end{proposition}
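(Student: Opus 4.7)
The strategy is to identify $H\M^G_{-V} = \pi^G_{-V}(H\M)$ with the reduced zeroth Bredon homology $\tilde H^G_0(S^V;\M)$ through the chain of isomorphisms $[S^{-V}, H\M]^G \cong [S^0, S^V\wedge H\M]^G \cong \pi^G_0(S^V\wedge H\M)$, and then to read off the answer from the cellular chain complex given in Theorem \ref{keythm}. By Lemma \ref{CellularLemma} the first non-basepoint cell of $S^V$ is ${G/G_{\lambda_{min}}}_+\wedge S^1$, so the tail of the chain complex begins with
\[
\M(G)\xleftarrow{\partial_1}\M(G_{\lambda_{min}})\leftarrow\cdots
\]
and the differential $\partial_1$ is $\tr^G_{G_{\lambda_{min}}}$ in all three cases of Theorem \ref{keythm}. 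Taking the cokernel yields $H\M^G_{-V}\cong\M(G)/\tr^G_{G_{\lambda_{min}}}\M(G_{\lambda_{min}})$.

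For the value at a subgroup $H$, $H\M_{-V}(H)=\pi^H_{-V}(H\M)\cong\tilde H^H_0(S^{i^*_H V};\M\downarrow^G_H)$, and I would split into two cases based on the position of $H$ relative to $G_{\lambda_{min}}$. If $G_{\lambda_{min}}\subsetneq H$: since the subgroup lattice of $G$ is linear, $H$ is not contained in any $G_{\lambda_i}$ with $a_i\neq 0$, so every irreducible summand of $V$ restricts nontrivially to $H$; moreover the smallest $H$-irreducible appearing in $i^*_H V$ has kernel exactly $G_{\lambda_{min}}$. The $G$-level argument then applies with $G$ replaced by $H$ and delivers $\M(H)/\tr^H_{G_{\lambda_{min}}}\M(G_{\lambda_{min}})$. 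If $H\subset G_{\lambda_{min}}$: then $H$ acts trivially on $\lambda_{min}$, so $i^*_H V$ decomposes as $n+V'$ with a trivial summand of dimension $n\geq 1$. The suspension isomorphism gives $\tilde H^H_0(S^{i^*_H V};\M\downarrow^G_H)\cong\tilde H^H_{-n}(S^{V'};\M\downarrow^G_H)=0$, since the reduced cellular chains of $S^{V'}$ are concentrated in non-negative degrees.

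For the Mackey functor structure maps, if either $K$ or $H$ lies inside $G_{\lambda_{min}}$ then the source or target of $\res^H_K$ or $\tr^H_K$ vanishes and the map is automatically zero. In the remaining range $G_{\lambda_{min}}\subset K\subset H$, transfer on the cokernels comes directly from transitivity $\tr^H_K\tr^K_{G_{\lambda_{min}}}=\tr^H_{G_{\lambda_{min}}}$, while restriction comes from the double coset formula which (since $G$ is abelian and $G_{\lambda_{min}}\subset K$) collapses so that $\res^H_K\tr^H_{G_{\lambda_{min}}}$ factors through $\tr^K_{G_{\lambda_{min}}}$. Hence both maps descend to the $\res^H_K,\tr^H_K$ inherited from $\M$.

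The main obstacle is the verification in case $G_{\lambda_{min}}\subsetneq H$ that the smallest $H$-irreducible in $i^*_H V$ has kernel precisely $G_{\lambda_{min}}$: this requires tracking how each $\lambda_i$ decomposes upon restriction to $H$, and relies crucially on the linearity of the subgroup lattice so that $H\cap G_{\lambda_i}$ always equals either $H$ or $G_{\lambda_i}$. Once this is established, the computation in case (b) via suspension and the verification of the Mackey functor structure are straightforward.
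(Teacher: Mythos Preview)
Your argument is correct, and the overall strategy—reading off $H_0$ from the cellular chain complex of $S^V$—is the same as the paper's. The difference lies in how you handle the levels $H\subsetneq G$ and the structure maps.

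The paper stays with the single $G$-equivariant cellular structure of $S^V$ at every level: for $G_{\lambda_{min}}\subset K\subset H$ it writes the bottom of the chain complex at the $G/H$- and $G/K$-levels as
\[
\M(G/H\times G/G)\xleftarrow{1\times\nabla}\M(G/H\times G/G_{\lambda_{min}}),\qquad
\M(G/K\times G/G)\xleftarrow{1\times\nabla}\M(G/K\times G/G_{\lambda_{min}}),
\]
identifies $\M(G/H\times G/G_{\lambda_{min}})\cong\bigoplus_{G/H}\M(G_{\lambda_{min}})$ so that $1\times\nabla$ is the sum of $\tr^H_{G_{\lambda_{min}}}$'s, and then reads off both the value $\coker(\tr^H_{G_{\lambda_{min}}})$ and the vertical maps $\res^H_K,\tr^H_K$ in one stroke. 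Your approach instead passes to the $H$-cellular structure of $i^*_H S^V$ and reruns the top-level argument, which is perfectly valid and arguably more conceptual; the verification you flag (that the smallest $H$-irreducible in $i^*_H V$ still has kernel $G_{\lambda_{min}}$) goes through exactly because $G_{\lambda_{min}}\subset H$ forces $H\cap G_{\lambda_j}=G_{\lambda_j}$ for every $\lambda_j$ occurring in $V$.

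One small point to tighten: your final paragraph shows that $\res^H_K$ and $\tr^H_K$ of $\M$ \emph{descend} to the cokernels, but that alone does not prove they \emph{are} the Mackey structure maps of $\upi_{-V}H\M$. You should add the observation that at the $0$-chain level the restriction and transfer in $C^G_*(S^V;\M)$ are literally $\res^H_K,\tr^H_K$ on $\M(G/H\times G/G)\cong\M(H)$, so after passing to $H_0$ they induce exactly those maps on the cokernels. The paper's product-of-orbits computation makes this automatic.
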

\begin{proof}
The first claim directly comes from the cellular chain of $S^V$ from Lemma \ref{CellularLemma}.

For the Mackey functor structure, if $H\subset G_{\lambda_{min}}$, then $H\M_{-V}^{G_{\lambda_{min}}}\cong H\M_{-|V|}^{G_{\lambda_{min}}}=0$ by the shearing isomorphism (see Proposition \ref{shearing}) and the definition of $H\M$.

Now take $G_{\lambda_{min}}\subset K\subset H$, the first two terms of the $G/H$ and $G/K$-level cellular chains are
\[
\xymatrix{
0&\M(G/H\times G/G)\ar[l]\ar@/_/[d]_{\res^H_K}&\M(G/H\times G/{G_{\lambda_{min}}})\ar[l]_{1\times \nabla}\\
0&\M(G/K\times G/G)\ar[l]\ar@/_/[u]_{\tr^H_K}&\M(G/K\times G/{G_{\lambda_{min}}})\ar[l]_{1\times \nabla}
}
\]
where $\nabla:G/{G_{\lambda_{min}}}\to G/G$ the canonical projection of $G$-sets. We have the identifications
\[
\M(G/H\times G/{G_{\lambda_{min}}})\cong \M(G\times_H i^*_H G/{G_{\lambda_{min}}})=\M(G\times_H \underset{G/H}{\coprod}H/{G_{\lambda_{min}}})\cong \underset{G/H}{\oplus}\M(G/{G_{\lambda_{min}}})
\]
and the map $1\times \nabla$ is the sum of $\tr^H_{G_{\lambda_{min}}}$'s. Similarly for the $G/K$-level chain complex. Taking $0$-th homology proves the claim.
\end{proof}

\begin{remark}
    It will be proved later that the map $a_V:H\M_0^G\to H\M_{-V}^G$ is the projection on the cokernel of $\tr^G_{G_V}$ (see Proposition \ref{prop hmnV}). For a Green functor $\R$, we have $a_V=\bar{1}\in \R(G)/{\tr^G_{G_V}}$, the class represented by $1\in \R(G)$.
\end{remark}

\begin{example}
If $G=C_4$, $\R=\A$, we have
\begin{equation*}
\xymatrix{
H\A_{-\alpha}=&\A(C_4)/\tr^{C_4}_{C_2}\ar@/_/[d]_{0}&H\A_{-\lambda}=&\A(C_4)/\tr^{C_4}_e\ar@/_/[d]_{\overline{\res^{C_4}_{C_2}}}\\
&0\ar@/_/[u]_{0}\ar@/_/[d]_{0}&&\A(C_2)/\tr^{C_2}_e\ar@/_/[d]_{0}\ar@/_/[u]_{\overline{\tr^{C_4}_{C_2}}}\\
&0\ar@/_/[u]_{0}&&\ar@/_/[u]_{0}0
}
\end{equation*}
Denote $\A(C_4)=\mathbb{Z}\langle 1,c,d \rangle$ where $c=[C_4/{C_2}],d=[C_4/e]$, and $\mathbb{A}(C_2)=\mathbb{Z}\langle 1,\omega\rangle$, where $\omega=[C_2/e]$. Then $H\A_{-\lambda}(G/G)=\mathbb{Z}\langle a_{\lambda}\rangle\oplus \mathbb{Z}\langle ca_{\lambda}\rangle$, and $H\A_{-\lambda}(G/{C_2})=\mathbb{Z}\langle \res^{C_4}_{C_2}a_{\lambda}\rangle$. We have $\overline{\tr^{C_4}_{C_2}}(\res^{C_4}_{C_2}a_{\lambda})=ca_{\lambda}$.
\end{example}

Now we turn our attention to the second family of generators, the orientation classes $u_V$ which will be defined in Definition \ref{def uv Green functors}. Recall the definition of orientable representations in Definition \ref{def representations}.

\begin{remark}
Note that the only irreducible non-orientable representation of $G$ is the sign representation $\alpha$. In particular, $V=a_0+a_1\alpha+\sum_{2\leq i\leq k}a_i\lambda_i\in RO(G)$ is non-orientable iff $a_1$ is odd and every non-orientable representation $V$ can be written as $V=V'\pm\alpha$ with orientable $V'$. 
\end{remark}

We firstly study the Mackey functor structure on the ($|V|-V$)-degree.
\begin{proposition}
\label{prop mackey uv}
Let $G=C_{2^k}$. Let $V$ be an orientable (not necessarily irreducible) $G$-representation. Then $H\M^G_{|V|-V}\cong \M(G_V)^G$. For the Mackey functor structure, we have
\[
H\M_{|V|-V}(H)=
\begin{cases}
\M(G_V)^H&\textrm{if }G_V\subsetneq H,\\
\M(H)&\textrm{otherwise.}
\end{cases}
\]
For $K\subset H\subset G_V$, the maps $\res^H_K,\tr^H_K$ are induced from that of $\M$. If $G_V\subset K\subset H$, then $\res^H_K$ is induced by inclusion of fixed points, and $\tr^H_K$ is given by $\tr^{C_{2^i}}_{C_{2^{i-1}}}(x)=(1+\gamma^{2^{k-i}})\cdot x$ and transitivity.
\end{proposition}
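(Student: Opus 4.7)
Plan: The strategy parallels the proof of Proposition \ref{prop mackey av}. We identify $H\M^G_{|V|-V}\cong \tilde{H}^G_{|V|}(S^V;\M)$ as the top homology of the cellular chain complex $C^G_*(S^V;\M)$ from Theorem \ref{keythm}, and for the other subgroup levels we use the analogous identification applied to $S^V|_H$ together with Proposition \ref{shearing} in the cases where $H\subset G_V$ makes $V|_H$ trivial.

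By Lemma \ref{CellularLemma}, the top two cells of $S^V$ are both stabilized by $G_V=G_{\lambda_{max}}$, so
\[
C^G_{|V|}(S^V;\M)=C^G_{|V|-1}(S^V;\M)=\M(G_V),
\]
equipped with the natural $G/G_V$-Weyl action. The key calculation is to show that the top differential $\partial_{|V|}\colon\M(G_V)\to\M(G_V)$ is multiplication by $1-\gamma$, where $\gamma$ denotes the image in $G/G_V$ of the generator of $G$. Once this is done,
\[
H\M^G_{|V|-V}\cong\ker(1-\gamma)=\M(G_V)^G.
\]

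For the full Mackey functor structure, the value at $G_V\subset H$ follows by rerunning the same argument over $H$, observing that $V|_H$ remains oriented with kernel $G_V$; this yields $H\M^H_{|V|-V}\cong \M(G_V)^H$. For $H\subset G_V$ the restriction $V|_H$ is trivial, so by shearing $S^V|_H\simeq S^{|V|}$ as $H$-spectra, giving $\pi_{|V|-V}^H(H\M)\cong\pi_0^H(H\M)=\M(H)$. The structure maps among levels $K\subset H\subset G_V$ agree with those of $\M$ in degree zero via the shearing identification. For $G_V\subset K\subset H$, the restriction is the inclusion of fixed points, and the transfer is the sum over cosets of $H/K$; the index-two case then reads $\tr^{C_{2^i}}_{C_{2^{i-1}}}(x)=(1+\gamma^{2^{k-i}})\cdot x$, with the remaining cases (including the mixed case $K\subset G_V\subset H$) handled by transitivity.

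The main obstacle is identifying $\partial_{|V|}$ precisely. The compositional formulas in Theorem \ref{keythm} only impose $\partial_{|V|-1}\partial_{|V|}=0$ on the top stratum, since $1-\gamma^{[G:G_V]}$ annihilates $\M(G_V)$, so one cannot distinguish $\partial_{|V|}=1-\gamma$ from the Weyl-group norm $N^{G/G_V}$ from the relations alone. Tracing through Construction \ref{construction of psi} and invoking the orientability of $V$ (the evenness of $y$ in the decomposition $V=y\alpha+\sum a_i\lambda_i$) supplies the correct parity in the alternation of differentials along the top stratum, forcing $\partial_{|V|}$ to be $1-\gamma$ rather than the norm.
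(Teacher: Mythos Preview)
Your overall strategy matches the paper's: compute the top homology of $C^G_*(S^V;\M)$ via the cellular structure, handle $H\subset G_V$ by shearing, and then describe the Mackey structure maps. The value computations are fine.

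The genuine gap is in your treatment of the structure maps when $G_V\subset K\subset H$. You simply \emph{assert} that the restriction is the inclusion of fixed points and that the transfer is the sum over $H/K$-cosets. But this is precisely what must be proved: a priori the restriction map in the Mackey functor $\upi_{|V|-V}H\M$ is induced by the map of chain complexes coming from $G/K_+\to G/H_+$, and it is not immediate that this recovers the inclusion $\M(G_V)^H\hookrightarrow\M(G_V)^K$. The paper carries this out explicitly by writing the top two terms of the $G/H$- and $G/K$-level chain complexes as $\mathbb{Z}[G]\otimes_{\mathbb{Z}[H]}i^*_H\M(G_V)$ and $\mathbb{Z}[G]\otimes_{\mathbb{Z}[K]}i^*_K\M(G_V)$, identifying the kernels of $1-\gamma$ with $\M(G_V)^H$ and $\M(G_V)^K$, and then checking that the induced map on kernels is the inclusion. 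For the transfer, the paper does \emph{not} argue directly that it is a sum over cosets; instead it invokes the double coset formula $\res^H_K\circ\tr^H_K(x)=(1+\gamma^{2^{k-i}})\cdot x$, which holds in any Mackey functor, and combines it with the injectivity of $\res^H_K$ (just established) to force $\tr^H_K(x)=(1+\gamma^{2^{k-i}})\cdot x$. Your proposal skips both of these arguments.

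A smaller point: you devote most of your effort to the ``main obstacle'' of pinning down $\partial_{|V|}=1-\gamma$ versus the norm. The paper treats this as already settled by the proof of Theorem~\ref{keythm} (Construction~\ref{construction of psi} determines each differential individually, not merely their composites), and simply quotes the top differential. Your caution here is not wrong, but it is misplaced emphasis: the real content of the proposition's proof is the Mackey structure maps above $G_V$, which you have left unaddressed.
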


\begin{proof}
$S^V$ has a cellular filtration with cells in top two dimensions given by $G/{G_V}_+\wedge S^{|V|-1}$ and $G/{G_V}_+\wedge S^{|V|}$. So its chain complex with coefficient in $\M$ has the following form in top dimensions
\begin{equation*}
    \cdots\xleftarrow{}\M(G/G_V)\xleftarrow{1-\gamma}\M(G/G_V).
\end{equation*}
The kernel of the top differential is $\M(G_V)^G$. Thus the first claim follows.

For the claim about the Mackey functor structure, notice that the lattice of subgroups of $G$ has a linear order. By transitivity of restrictions and transfers, we may assume that $[H:K]=2$.

When $H\subset G_V$, the shearing isomorphism $G/H_+\wedge S^{V-|V|}\cong G/H_+\wedge S^0$ (see Proposition \ref{shearing}) implies an isomorphism of Mackey functors $H\M_0\downarrow^G_{G_V}\cong H\M_{|V|-V}\downarrow^G_{G_V}$. Now let $G_V\subset K\subset H=C_{2^i}=\langle\gamma^{2^{k-i}}\rangle$. Thus $K=C_{2^{i-1}}=\langle\gamma^{2^{k-i+1}}\rangle$. Let $\nabla:G/K\to G/H$ be the natural projection. In the following commutative diagram
\[
\xymatrix{
\M(G/H\times G/G_V)\ar[d]_{\nabla\times 1}&\M(G/H\times G/G_V)\ar[l]_{1\times (1-\gamma)}\ar[d]^{\nabla\times 1}\\
\M(G/K\times G/{G_V})&\M(G/K\times G/{G_V}
)\ar[l]_{1\times (1-\gamma)},
}
\]
the first and second rows are the differential $C_{|V|-1}(S^V;\M)\xleftarrow{1-\gamma}C_{|V|}(S^V;\M)$ on the $G/H$ and $G/K$-levels, respectively. We have the identifications
\begin{equation*}
\begin{aligned}
&\M(G/H\times G/G_V)\cong[G/H_+\wedge G/{G_V}_+,H\M]^G\cong [G_+\wedge_H i^*_H(G/{G_V}_+),H\M]^G\cong [i^*_H(G/{G_V}_+),H\M]^H\\
&\cong \mathbb{Z}[G]\otimes_{\mathbb{Z}[H]}i^*_H \M(G_V)    
\end{aligned}
\end{equation*}
and similarly for $\M(G/K\times G/G_V)$. Here the notation $i^*_H \M(G_V)$ means we are restricting the Weyl $G/G_V$-action to an $H/G_V$-action on $\M(G_V)$. To understand the differential $1\times (1-\gamma)$, we have to understand the action of $\gamma$ on $\mathbb{Z}[G]\otimes_{\mathbb{Z}[H]}i^*_H \M(G_V)$ coming from $\gamma:G/G_V\to G/G_V$. Tracing through the identifications we conclude that it acts by left multiplication on $\mathbb{Z}[G]$. 

The commutative diagram above becomes
\[
\xymatrix{
\mathbb{Z}[G]\otimes_{\mathbb{Z}[H]}i^*_H \M(G_V)\ar[d]_{}&\mathbb{Z}[G]\otimes_{\mathbb{Z}[H]}i^*_H \M(G_V)\ar[l]_{(1-\gamma)}\ar[d]^{}\\
\mathbb{Z}[G]\otimes_{\mathbb{Z}[K]}i^*_K \M(G_V)&\mathbb{Z}[G]\otimes_{\mathbb{Z}[K]}i^*_K \M(G_V)\ar[l]_{(1-\gamma)},
}
\]
where both $(1-\gamma)$ are given by left multiplication on $\mathbb{Z}[G]$. The kernel of the two $(1-\gamma)$'s are
\[
(\mathbb{Z}[G]\otimes_{\mathbb{Z}[H]}i^*_H \M(G_V))^G\cong \mathbb{Z}[G/H]^G\otimes \M(G_V)^H\cong \M(G_V)^H,
\]
and
\[
(\mathbb{Z}[G]\otimes_{\mathbb{Z}[K]}i^*_H \M(G_V))^G\cong \mathbb{Z}[G/K]^G\otimes \M(G_V)^K\cong \M(G_V)^K.
\]
The induced maps on kernels is the map induced from the $G$-equivariant map $\mathbb{Z}[G/H]\to \mathbb{Z}[G/K]$ by sending $1$ to $1+\gamma^{2^{k-i}}$, which can be easily seen to be $\mathbb{Z}\xrightarrow{1}\mathbb{Z}$ upon taking $(-)^G$. Thus restrictions are induced by inclusion of fixed points.

For the transfers, we have the double-coset formula
\[
\res^H_K\circ \tr^H_K(x)=(1+\gamma^{2^{k-i}})\cdot x.
\]
Since restrictions are inclusions of subgroups, we deduce $\tr^H_K(x)=(1+\gamma^{2^{k-i}})\cdot x$.

\end{proof}

\begin{construction}
   \label{cons class uv}
For an orientable $G$-representation $V$, consider the following commutative diagram
\[
\xymatrix{
&\M(G)\ar[d]^{\res^G_{G_V}}\ar@{-->}[ld]\\
H\M^G_{|V|-V}\cong \M(G_V)^G\ar[r]&\M(G_V)\ar[r]^{1-\gamma}&\M(G_V)
}
\]
The map $(1-\gamma)$ is the top differential of $C_*(S^V;\M)$. The dashed arrow is induced from $\res^G_{G_V}$, since for any $H\subset G$, we have $\res^G_{H}(\M(G))\subset \M(H)^G$. We make the following definition of orientation classes for Green functors. 
\end{construction}

\begin{definition}
\label{def uv Green functors}
For $V$ an orientable $G$-representation, $\R$ a Green functor, we define the \textit{orientation class} $u_V$ to be the element $\res^G_{G_V}(1)\in H\R^G_{|V|-V}$.
\end{definition}
\begin{remark}
\label{remark action of uv}
\leavevmode
\begin{enumerate}
\item Note that by Construction \ref{cons class uv} we obtain that the multiplication by $u_V\colon H\R^G_0\to H\R^G_{|V|-V}$ is the restriction map $\res^G_{G_V}$, by the following commutative diagram
\[
\xymatrix{
H\R^G_0\ar[r]^{u_V}\ar[d]_{\res^G_{G_V}}&H\R^G_{|V|-V}\ar[d]^{\res(|V|-V)^G_{G_V}}\\
H\R_0^{G_V}\ar[r]^{u_V}&H\R_{|V|-V}^{G_V}
}
\]
where ${\res(|V|-V)^G_{G_V}}$ is inclusion of $G$-fixed points, and the horizontal map in the second row is an isomorphism.
\item Definition \ref{def uv Green functors} is restricted to Green functors. However, since every Mackey functor $\M$ is a module over Burnside Mackey functor $\A$, we can consider the induced action of $H\A^G_\star$ on $H\M^G_\star$. We can still deduce $u_V\colon H\M_0^G\to H\M^G_{|V|-V}$ is $\res^G_{G_V}$ as above. If $\R$ is a Green functor, its orientation class coincides with the image of the orientation class of $H\A$ through the canonical ring map $H\A\to H\R$.
\end{enumerate}
\end{remark}

\begin{example}
\leavevmode
\begin{enumerate}
    \item When $\R=\Z$, $H\Z_{|V|-V}=\Z$, and the definition of orientation classes here agrees with the orientation classes used widely in previous computations of \cite{HHRa,HHRb,Zeng17,NickG,Yan22HZ}.
    \item When $\R=\A$ and $G=C_4$, the Weyl actions are always trivial, and we have
\begin{equation*}
\xymatrix{
H\A_{2-2\alpha}=&\A(C_2)\ar@/_/[d]_{1}\quad&H\A_{2-\lambda}=&\A(e)\ar@/_/[d]_{1}\\
&\A(C_2)\ar@/_/[u]_{2}\ar@/_/[d]_{\res^{C_2}_e}\quad&&\A(e)\ar@/_/[d]_{1}\ar@/_/[u]_{2}\\
&\A(e)\ar@/_/[u]_{\tr^{C_2}_e}\quad&&\ar@/_/[u]_{2}\A(e)
}
\end{equation*}
If $\mathbb{A}(C_2)=\mathbb{Z}\langle 1,\omega\rangle$, where $\omega=[C_2/e]$, then $u_{2\alpha}\in H\A_{2-2\alpha}(G/G)$ correspond the the generator $1$ and $H\A_{2-2\alpha}(G/G)=\mathbb{Z}\langle u_{2\alpha}\rangle\oplus \mathbb{Z}\langle \omega u_{2\alpha}\rangle$. Note this agrees with a general definition of orientation classes for all finite groups of Kriz \cite{SophieKriz}.
\end{enumerate}
\end{example}

The following two propositions allow us to restrict our attention only to classes coming from irreducible representations.
\begin{proposition}
\label{prop av commute}
Let $G=C_{2^k}$ and $V,W$ be two $G$-representations. Then the following equalities hold in $\pi^G_{\star}S^0$, thus hold in $H\R^G_{\star}$:
\begin{equation*}
    a_Va_W=a_Wa_V=a_{V+W}=a_{W+V}.
\end{equation*}
\end{proposition}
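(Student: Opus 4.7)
The plan is to unfold the definition of the Euler classes and reduce everything to the observation that the inclusion of $G$-fixed points is compatible with smash products of representation spheres.

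First I would handle the identity $a_V\cdot a_W=a_{V+W}$. By Definition \ref{def a classes}, the class $a_V$ is represented by the composite $S^0\xrightarrow{i_V}S^V\xrightarrow{1\wedge\eta}S^V\wedge H\R$ and similarly for $a_W$. The product $a_V\cdot a_W$ in $H\R^G_\star$ is computed by smashing the two representing maps, swapping the middle factors, and then applying the multiplication $\mu\colon H\R\wedge H\R\to H\R$. Using that $\mu\circ(\eta\wedge\eta)=\eta$ (unitality of $H\R$), this composite simplifies to
\[
S^0\xrightarrow{i_V\wedge i_W}S^V\wedge S^W\xrightarrow{1\wedge\eta}S^{V+W}\wedge H\R.
\]
The key point is then that $i_V\wedge i_W$ is exactly the inclusion of $G$-fixed points $i_{V+W}\colon S^0\to S^{V+W}$, since $(S^V\wedge S^W)^G=(S^V)^G\wedge(S^W)^G=S^0$ and the two constructions agree on this fixed-point sphere. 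Hence $a_V\cdot a_W=a_{V+W}$.

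Next I would obtain commutativity $a_V\cdot a_W=a_W\cdot a_V$ from the fact that $H\R$ is a commutative ring spectrum (since $\R$ is a Green functor). Under the canonical identification $V+W=W+V$ in $RO(G)$, the symmetry isomorphism $S^V\wedge S^W\simeq S^W\wedge S^V$ intertwines $i_V\wedge i_W$ with $i_W\wedge i_V$, so applying the same reasoning in the other order gives $a_W\cdot a_V=a_{W+V}$, and these two classes live in the same graded piece $H\R^G_{-(V+W)}$.

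The remaining equality $a_{V+W}=a_{W+V}$ is tautological once one adopts the convention that $V+W$ and $W+V$ name the same element of $RO(G)$; both classes are represented by the inclusion of $G$-fixed points into the (canonically isomorphic) representation sphere, and the symmetry isomorphism identifies them. I do not anticipate any real obstacle: the only thing to be careful about is that the multiplication in $H\R^G_\star$ is defined via $\mu$ and the swap, so one must verify that the swap of smash factors does not introduce a sign when applied to the fixed-point inclusions $i_V$ and $i_W$, which it does not because $i_V,i_W$ come from honest basepoint-preserving maps out of $S^0$ rather than from suspension classes.
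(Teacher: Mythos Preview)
Your argument is correct, and it is genuinely different from the paper's. You prove $a_Va_W=a_{V+W}$ directly by observing that the smash of the two fixed-point inclusions $i_V\wedge i_W$ is the fixed-point inclusion $i_{V+W}$, and you obtain commutativity from the naturality of the symmetry isomorphism with respect to these inclusions (the swap on $S^0\wedge S^0$ being the identity). The paper instead quotes Dugger's coherence theory for $RO(G)$-graded multiplication: it writes the general commutativity law $xy=\tau(S^V)^0\tau(S^W)^0\,yx$ for arbitrary $x\in\pi^G_{-V}$, $y\in\pi^G_{-W}$, reduces to irreducible $V,W$, and then records the specific values $\tau(S^{\lambda_i})=1$ and $\tau(S^\alpha)=1-\tr^G_{G'}\res^G_{G'}$.

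Your route is more elementary and self-contained; it also handles the case $V=W$ without a separate step, whereas the paper's written argument invokes ``since $V\neq W$'' and then reduces by induction. What the paper's approach buys is a stronger conclusion: it notes that the identities already hold in the equivariant stable stems $\pi^G_\star S^0$, not merely after pushing forward along $\eta$ to $H\R$, and the trace computation applies to \emph{all} classes in those degrees, not just the Euler classes. Your argument, by contrast, uses crucially that the classes factor through the fixed-point inclusion $S^0\to S^{V+W}$, so it is specific to $a_V$ and $a_W$.
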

\begin{proof}
    $RO(G)$-graded commutativity is studied by Dugger in \cite{Dugger14coherence}. Using his theory, we will show that these equalities actually hold in the $G$-equivariant stable stems. Since for any two $G$-representation $V_1,V_2$ we have $a_{V_1}a_{V_2}=a_{V_1+V_2}$, we can assume that $V,W$ are irreducible. Using the methods in Appendix B of \cite{Yan22HZ}, we can compute for each $\lambda_i$, $\tau(S^{\lambda_i})=1$ and $\tau(S^{\alpha})=1-\tr^{G}_{G'}\circ \res^G_{G'}$. Here $\tau(X)\in \pi^G_0 S^0$ is the trace of identity of a finite $G$-complex $X$ studied in Subsection 4.19 of \cite{Dugger14coherence}.

    Now we will use \cite[Proposition 1.2]{Dugger14coherence}. Take any $x\in \pi^G_{-V}S^0 ,y\in \pi^G_{-W}S^0$. If $V=\lambda_i$ and $W=\lambda_j$ for some $1\leq i,j\leq k$, then $xy=\tau(S^{\lambda_i})^{1\cdot 1}yx=yx$ if $i=j$, and $xy=\tau(S^{\lambda_i})^0\tau(S^{\lambda_j})^0yx=yx$ if $i\neq j$.

    If $V=\lambda_i$ for $1\leq i\leq k$, and $W=\alpha$, then $xy=\tau(S^{\lambda_i})^0\tau(S^{\alpha})^0yx=yx$.

    If $V=W=\alpha$, then $xy=\tau(S^{\alpha})^{1\cdot 1}yx=(1-\tr^{G}_{G'}\circ \res^G_{G'})yx$. The class $\tr^{G}_{G'}\circ \res^G_{G'}yx$ may not be zero in general. But if $x=y=a_{\alpha}$, it becomes zero because $\res^G_{G'}a_{\alpha}=0$.
\end{proof}
\begin{proposition}
\label{prop uv commute}
Let $V,W$ are two orientable $G$-representations. Assume $G_V\subset G_W$. Regard $H\M$ as an $H\A$-module, then:
\begin{enumerate}
\item The map $u_V\colon H\M^G_{|W|-W}\to H\M^G_{|V|+|W|-V-W}$ is the restriction map $\res^{G_W}_{G_V}$.
\item The map $u_W\colon H\M^G_{|V|-V}\to H\M^G_{|V|+|W|-V-W}$ is an isomorphism.
\item We have that $u_Vu_W=u_Wu_V$ in $H\A_{\star}$, thus in $H\R_{\star}$ for all Green functors $\R$.
\end{enumerate}
\end{proposition}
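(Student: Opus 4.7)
Since $G_V \subset G_W$, we have $G_{V+W} = G_V \cap G_W = G_V$, so Proposition \ref{prop mackey uv} identifies the three relevant $G$-level groups as $\pi^G_{2-W} H\M \cong \M(G_W)^G$, $\pi^G_{2-V} H\M \cong \M(G_V)^G$, and $\pi^G_{4-V-W} H\M \cong \M(G_V)^G$. Moreover, both $V$ and $W$ restrict to trivial representations of $G_V$, and since restriction is a unital ring map, Definition \ref{def uv Green functors} yields $\res^G_{G_V}(u_V) = \res^G_{G_V}(u_W) = 1 \in \A(G_V) = \pi^{G_V}_0 H\A$. This is the essential input throughout.

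For part (1), the plan is to exploit the naturality square
\[
\begin{tikzcd}
\pi^G_{2-W} H\M \rar["u_V\cdot"] \dar["\res^G_{G_V}"'] & \pi^G_{4-V-W} H\M \dar["\res^G_{G_V}"] \\
\pi^{G_V}_{2-W} H\M \rar["\res(u_V)\cdot"] & \pi^{G_V}_{4-V-W} H\M
\end{tikzcd}
\]
and unpack it via Proposition \ref{prop mackey uv}. The right vertical becomes the inclusion of fixed points $\M(G_V)^G \hookrightarrow \M(G_V)$, the left vertical decomposes as $\M(G_W)^G \hookrightarrow \M(G_W) \xrightarrow{\res^{G_W}_{G_V}} \M(G_V)$, and the bottom is multiplication by the unit, i.e., the identity. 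Injectivity of the right vertical then forces $u_V \cdot x = \res^{G_W}_{G_V}(x)$, as required.

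Part (2) uses the same strategy and is in fact cleaner: both verticals of the analogous square are the inclusion $\M(G_V)^G \hookrightarrow \M(G_V)$, and the bottom is again the identity, so $u_W \cdot x = x$, giving not just an isomorphism but the identity map.

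For part (3), both products land in $\pi^G_{4-V-W} H\A \cong \A(G_V)^G$, with $u_V = \res^G_{G_V}(1)$ and $u_W = \res^G_{G_W}(1)$ by Definition \ref{def uv Green functors}. Applying part (1) with $\R = \A$ yields $u_V u_W = \res^{G_W}_{G_V}(u_W) = \res^{G_W}_{G_V}(\res^G_{G_W}(1)) = \res^G_{G_V}(1)$, while part (2) yields $u_W u_V = u_V = \res^G_{G_V}(1)$; equality in $H\R_\star$ then follows by functoriality along the unit map $H\A \to H\R$. I expect the main bookkeeping subtlety to be the two different descriptions of the restriction map $\res^G_{G_V}$ in a Mackey functor $H\M_{|U|-U}$ depending on whether $G_V$ contains $G_U$, but once this is set straight by appealing to Proposition \ref{prop mackey uv} together with the transitivity $\res^G_{G_V} = \res^{G_W}_{G_V} \circ \res^G_{G_W}$ to handle the mixed case, the argument becomes mechanical.
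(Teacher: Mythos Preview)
Your proof is correct and follows essentially the same approach as the paper: both use the naturality square for multiplication by $u_V$ with respect to restriction, identify the entries and restriction maps via Proposition~\ref{prop mackey uv}, and conclude by injectivity of the inclusion of fixed points together with the fact that $\res^G_{G_V}(u_V)$ is a unit. The paper routes the restriction through the intermediate subgroup $G_W$ in a two-column diagram, whereas you collapse this to a single restriction $\res^G_{G_V}$; this is a cosmetic difference, and your derivation of part~(3) by directly applying parts~(1) and~(2) to compute both products as $\res^G_{G_V}(1)$ is slightly more explicit than the paper's commutative-square argument.
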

\begin{proof}
Consider the following commutative diagram where horizontal maps are restriction maps in $H\M_{|W|-W}$ and $H\M_{|V|+|W|-V-W}$, and vertical maps are multiplication by $u_V$ and its restrictions:
\[
\xymatrix{
H\M_{|W|-W}^G\ar[r]^{\res^{G}_{G_W}}\ar[d]_{u_V}&H\M_{|W|-W}^{G_W}\ar[r]^{\res^{G_W}_{G_V}}\ar[d]_{u_V}&H\M_{|W|-W}^{G_V}\ar[d]^{\cong}_{u_V}\\
H\M_{|V|+|W|-V-W}^G\ar[r]^{\res^{G}_{G_W}}&H\M_{|V|+|W|-V-W}^{G_W}\ar[r]^{\res^{G_W}_{G_V}}&H\M_{|V|+|W|-V-W}^{G_V}.
}
\]
By Theorem \ref{keythm}, the above diagram is
\[
\xymatrix{
\M(G_W)^{G}\ar[r]^{i}\ar[d]&\M(G_W)^{}\ar[r]^{\res^{G_W}_{G_V}}\ar[d]&\M(G_V)\ar[d]^{\cong}\\
\M(G_V)^{G}\ar[r]^{i}&\M(G_V)^{G_W}\ar[r]^{i}&\M(G_V).
}
\]
All maps labeled $i$ are inclusion of fixed points, and we deduce the first vertical map is induced from $\res^{G_W}_{G_V}$. This proves $(1)$. An analogous argument will prove $(2)$.

Then we see that the following isomorphic diagrams commute
\[
\xymatrix{
H\M_0^G\ar[r]^-{u_W}\ar[d]^{u_V}&H\M_{|W|-W}^G\ar[d]^{u_V}&\M(G)\ar[r]^{\res^{G}_{G_W}}\ar[d]^{\res^{G}_{G_V}}&\M(G_W)^G\ar[d]^{\res^{G_W}_{G_V}}\\
H\M_{|V|-V}^G\ar[r]^-{u_W}&H\M_{|V|+|W|-V-W}^G,&\M(G_V)^G\ar[r]^1&\M(G_V)^G.
}
\]
When $\M=\A$, the two images of $1\in H\A_0^G$ in $H\A_{|V|+|W|-V-W}^G$ agree, which proves $(3)$.
\end{proof}

\section{Vanishing, \texorpdfstring{$a,u$}{}-isomorphism regions and the gold relation}
In this section we determine the parts of the $RO(G)$-graded homotopy Mackey functors $H\M_{\star}$ where multiplication by different Euler and orientation classes are isomorphism. We also study the parts when they vanish and deduce a gold relation for Green functors $\R$, generalizing the original gold relation of \cite{HHRb}.

\subsection{\texorpdfstring{$a_{\lambda_s},u_{\lambda_s}$}{}-isomorphism regions}
In this subsection we study the $RO(G)$-graded regions where multiplications by $a_{\lambda_s},\aal$ and $u_{\lambda_s}$ are isomorphisms respectively. For simplicity, we write $\lambda=\lambda_k$, as it will play a very important role. Recall that we are working over $G=C_{2^k}$, thus $\lambda$ is the faithful representation.

We start with definitions of Mackey functors which will be used for the description of the homology of a unit representation sphere.
\begin{definition}[\cite{TW95}]\label{restricted Mackey functor}
    For $H\subset G=C_{2^k}$, let $\M\downarrow^G_H$ be the $H$-Mackey functor defined by
    \[
    \M\downarrow^G_H(H/K)=M(G/K), K\subset H,
    \]
    with restrictions and transfers induced from that of $\M$. This is the \emph{restricted} Mackey functor. The restriction functor $\downarrow^G_H:Mack_G\to Mack_H$ is the right adjoint of the induction functor $\uparrow^G_H$.
\end{definition}

\begin{definition}[\cite{TW95}]
\label{def FP FQ}
    Let $G=C_{2^k}$ and let $M$ be a $\mathbb{Z}[G]$-module. We define:
    \begin{enumerate}
        \item The \emph{fixed-point Mackey functor} $FP(M)$ by
        \[
    FP(M)(G/H)=M^H.
    \]
    Since restrictions and transfers are transitive, we specify them for $H=C_{2^i}=\langle\gamma^{2^{k-i}}\rangle$ and $K=C_{2^{i-1}}$ for $1\leq i\leq k$. We have $\res^H_K=1$ and $\tr^H_K=1+\gamma^{2^{k-i}}$.
    \item The \emph{fixed-quotient Mackey functor} $FQ(M)$ by
    \[
    FQ(M)(G/H)=M/H.
    \]
    Since restrictions and transfers are transitive, we specify them for $H=C_{2^i}=\langle\gamma^{2^{k-i}}\rangle$ and $K=C_{2^{i-1}}$ for $1\leq i\leq k$. We have $\res^H_K=1+\gamma^{2^{k-i}}$ and $\tr^H_K=1$.
    \end{enumerate}
Recall that we have the forgetful functor $U:Mack_G\to \mathbb{Z}[G]-mod,\M\mapsto \M(e)$. Then $FP$ is the right adjoint of $U$ and $FQ$ is the left adjoint.

\end{definition}
\begin{definition}
\label{TFP TFQ}
    Let $G=C_{2^k}$ and let $M$ be a $\mathbb{Z}[G]$-module. Define a new $\mathbb{Z}[G]$-module structure on $M$, denoted by $\widetilde{M}$, via the rule $\gamma \ast m\coloneqq -\gamma\cdot m$, where the right-hand-side action is the original $G$-action on $M$. 
    Then we define:
    \begin{enumerate}
        \item The \emph{twisted fixed-point Mackey functor} $TFP(M)$ by 
        \[
        TFP(M)= FP(\widetilde{M}).
        \]
    \item The \emph{twisted fixed-quotient Mackey functor} $TFQ(M)$ by 
    \[
    TFQ(M)= FQ(\widetilde{M}).
    \]
    \end{enumerate}
    Note that $\gamma^2\ast m=\gamma^2 \cdot m$. This implies $TFP(M)\downarrow^G_{G'}=FP(M)\downarrow^G_{G'}$ and $TFQ(M)\downarrow^G_{G'}=FQ(M)\downarrow^G_{G'}$. The top two levels of $TFP(M)$ and $TFQ(M)$ look like
    \[
\begin{tikzcd}
    _{(1+\gamma)}M\dar[bend right = 20, "1"'] &\text{and}&\quad M/(1+\gamma)\dar[bend right = 20, "1-\gamma"'] \\
    _{(1-\gamma^2)}M\uar[bend right = 20,"1-\gamma"'] &&\quad M/{(1-\gamma^2)}.\uar[bend right = 20,"1"']
\end{tikzcd}
    \]
\end{definition}

\begin{remark}
\leavevmode
\begin{enumerate}
    \item The action on $\widetilde{M}$ is the same as the diagonal action on $\tilde{\ZZ}\otimes_{\ZZ} M$ where $\widetilde{\ZZ}$ is the sign representation of $G$. To avoid confusion, if $\M$ is a Mackey functor, we will always use $\gamma \cdot m$ to denote the canonical Weyl action for $m\in \M(H),H\subset G$.
    \item The adjective 'twisted' comes from that they arise from $H\M_U^G$ when $U$ is non-orientable. The top differential in this case is $1+\gamma$ on $\M(e)$, which corresponds to $1-\gamma$ on $\widetilde{\M(e)}$.
\end{enumerate}
\end{remark}

Consider now the following cofiber sequence:
\[
S(V)_+\to S^0\to S^V.
\]
We can use this cofiber sequence to compute the groups $H\M^G_{nV}$ for an irreducible actual $G$-representation and $n\in\mathbb{Z}$ and the action of the element $a_\lambda$ on $H\M^G_\star$. We will firstly need the following lemma.
\begin{lemma}
\label{lem homology Slambda}
The $RO(G)$-graded homotopy Mackey functor $H\M_{V}(S(\lambda)_+)$ is given as follows:
\[
H\M_{V}(S(\lambda)_+)=
\left\{
\begin{array}{ll}
FQ(\M(e))&\textrm{if }|V|=0\textrm{ and }V\textrm{is orientable}\\
TFQ(\M(e))&\textrm{if }|V|=0\textrm{ and }V\textrm{is non-orientable}\\
FP(\M(e))&\textrm{if }|V|=1\textrm{ and }V\textrm{is orientable}\\
TFP(\M(e))&\textrm{if }|V|=1\textrm{ and }V\textrm{is non-orientable}\\
0&\textrm{else.}
\end{array}
\right.
\]

\end{lemma}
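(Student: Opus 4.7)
The plan is to exploit the simple $G$-CW structure on $S(\lambda)_+$: a disjoint basepoint, one free $0$-cell orbit $G/e_+$, and one free $1$-cell orbit $G/e_+\wedge D^1$, whose attaching map sends the two endpoints of each arc to $e$ and $\gamma\cdot e$ on the $0$-skeleton. Stably this encodes $S(\lambda)_+$ as a cofibre
\[
(G/e)_+ \to S(\lambda)_+ \to S^1\wedge (G/e)_+,
\]
and I will read off the lemma from the associated long exact sequence after smashing with $H\M$ and applying $\pi_V^G(-)$.

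The first input is the Wirthm\"uller isomorphism (a special case of Proposition \ref{shearing}): $\pi_V^G((G/e)_+\wedge H\M)\cong \pi_V^e(H\M)$, which equals $\M(e)$ when $|V|=0$ and vanishes otherwise. Hence for $|V|\notin\{0,1\}$ both groups in the long exact sequence adjacent to $H\M_V(S(\lambda)_+)$ vanish, so $H\M_V(S(\lambda)_+)=0$, which disposes of the ``else'' case.

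For $|V|\in\{0,1\}$ the long exact sequence reduces to either $\M(e)\xrightarrow{\partial}\M(e)\to H\M_V(S(\lambda)_+)\to 0$ (when $|V|=0$) or $0\to H\M_V(S(\lambda)_+)\to \M(e)\xrightarrow{\partial}\M(e)$ (when $|V|=1$), where $\partial$ is induced by the attaching map $\phi\colon G_+\wedge S^0\to G_+$ smashed with $S^V$. The two components of $\phi$ are the identity and left multiplication by $\gamma$. Conjugating the $\gamma$-component through the shearing isomorphism $G_+\wedge S^V\cong G_+\wedge S^{|V|}$ (given by $(g,v)\mapsto(g,g^{-1}v)$) yields $(g,w)\mapsto(\gamma g,\gamma^{-1}w)$; on $\pi_V^G(G_+\wedge H\M)\cong \M(e)$ the first factor realises the Weyl action of $\gamma$ on $\M(e)$, while the second factor induces multiplication by $\deg(\gamma^{-1}|_{S^{|V|}})=\det(\gamma|_V)=(-1)^b$, where $b$ is the coefficient of $\alpha$ in $V$. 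Consequently $\partial=1-\gamma$ when $V$ is orientable and $\partial=1+\gamma$ when $V$ is non-orientable, yielding $\M(e)/(1\mp\gamma)$ or $\ker(1\mp\gamma)$ at the $G/G$-level.

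To upgrade this to the four claimed Mackey functors, I would run the identical argument at each $G/H$-level: since $i_H^*\lambda$ is again a faithful representation of any subgroup $H=C_{2^i}$, the cofibre sequence and the shearing-based identification work verbatim with $\gamma$ replaced by $\gamma^{2^{k-i}}$, producing $\M(e)/(1\mp\gamma^{2^{k-i}})$ or $\prescript{}{1\mp\gamma^{2^{k-i}}}{\M(e)}$, which are precisely the $G/H$-values of $FQ(\M(e))$, $TFQ(\M(e))$, $FP(\M(e))$, $TFP(\M(e))$. Naturality of both the cofibre sequence and the Wirthm\"uller isomorphism under change of subgroup identifies the restrictions with the canonical inclusions/projections and the transfers via the double coset formula, matching the Mackey functor structures imposed by the four functors. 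The one subtle step I expect to cost real bookkeeping is pinning down the orientation-dependent sign in $\partial$: it surfaces only after conjugating through the shearing isomorphism and invoking the degree of $\gamma$ on $S^{|V|}$, and it is the sole place where the argument is not purely formal.
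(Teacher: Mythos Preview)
Your proposal is correct and follows essentially the same approach as the paper: both use the cofibre sequence $G/e\xrightarrow{1-\gamma}G/e\to S(\lambda)$, apply the shearing isomorphism to identify $\pi_V^G(G/e_+\wedge H\M)\cong\M(e)$, and trace the orientation-dependent sign in the connecting map through the shearing identification. The paper packages the non-orientable case as the twisted module $\widetilde{\M(e)}$ while you track the sign directly in $\partial$, and the paper defers the Mackey structure to Theorem~\ref{keythm} where you sketch the level-by-level argument, but these are cosmetic differences.
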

\begin{proof}
We first compute the $G/G$-level abelian group and claim it is as follows:
\[
H\M^G_{V}(S(\lambda)_+)=
\left\{
\begin{array}{ll}
\M(e)/(1-\gamma)&\textrm{if }|V|=0\textrm{ and }V\textrm{is orientable}\\
\M(e)/(1+\gamma)&\textrm{if }|V|=0\textrm{ and }V\textrm{is non-orientable}\\
\M(e)^G&\textrm{if }|V|=1\textrm{ and }V\textrm{is orientable}\\
_{(1+\gamma)}\M(e)&\textrm{if }|V|=1\textrm{ and }V\textrm{is non-orientable}\\
0&\textrm{else.}
\end{array}
\right.
\]

The unit sphere $S(\lambda)$ is the homotopy cofiber in the following sequence:
\[
\begin{tikzcd}
G/e\rar["1-\gamma"]&G/e\rar&S(\lambda).
\end{tikzcd}
\]
By the long exact sequence in homotopy associated to this cofiber sequence we get:
\[
\begin{tikzcd}[row sep=small]
H\M^G_{V+1}(G/e_+)\rar&
H\M^G_{V+1}(S(\lambda)_+)\rar&
H\M^G_{V}(G/e_+)\rar["1-\gamma"]&
H\M^G_{V}(G/e_+)\rar&\phantom{.}\\
H\M^G_{V}(S(\lambda)_+)\rar&
H\M^G_{V-1}(G/e_+).&&&
\end{tikzcd}
\]
For any $G$-representation $W$ we have that $H\M^G_{W}(G/e_+)\cong H\M^e_{|W|}$. In particular, it is zero if $|W|\neq 0$ and equal to $M(e)$ if $|W|=0$ as abelian groups. As $\mathbb{Z}[G]$-modules, the action depends on the orientability of $W$. If $W$ is such that $|W|=0$ and orientable, then $H\M^G_{W}(G/e_+)=\M(e)$; if $W$ is non-orientable, then $H\M^G_{W}(G/e_+)=\widetilde{\M(e)}$. We can see this fact by looking at the shearing isomorphisms (see Proposition \ref{shearing})
\[
\xymatrix{
G/e_+\wedge S^W\ar[d]^{\cong}\ar[r]^{\gamma\wedge 1} &G/e_+\wedge S^W\ar[d]^{\cong}&(g,a)\ar@{|->}[r]\ar@{|->}[d]&(\gamma g,a)\ar@{|->}[d]\\
G_+\wedge _e S^{|W|}\ar[r]^{}&G_+\wedge_e S^{|W|},&(g,g^{-1}a)\ar@{|->}[r]&(\gamma g,g^{-1}\gamma^{-1}a).
}
\]

Therefore in the sequence above the third and fourth arguments are zero unless $|V|=0$. In the latter case we have that the outer groups are equal to zero and the long exact sequence above takes the form:
\[
\begin{tikzcd}[column sep=small]
0\rar&
H\M^G_{V+1}(S(\lambda)_+)\rar&
\M(e)\rar["1-\gamma"]&
\M(e)\rar&
H\M^G_{V}(S(\lambda)_+)\rar&
0
\end{tikzcd}
\] or
\[
\begin{tikzcd}[column sep=small]
0\rar&
H\M^G_{V+1}(S(\lambda)_+)\rar&
\widetilde{\M(e)}\rar["1-\gamma"]&
\widetilde{\M(e)}\rar&
H\M^G_{V}(S(\lambda)_+)\rar&
0,
\end{tikzcd}
\]
depending on the orientability of $V$. The conclusion on the abelian group structure follows. The Mackey functor structure can be computed as in Remark \ref{remark Mackey functor from cellular chain}.
\end{proof}

\begin{remark}
Using the same cofiber sequence we can compute the $H\M$-cohomology Mackey functor of $S(\lambda)_+$ and obtain that $H\M^{-V+1}(S(\lambda)_+)\cong H\M_{V}(S(\lambda)_+)$ as Mackey functors. This comes from the fact that $S(\lambda)_+\simeq S^1\wedge D(S(\lambda)_+)$.
\end{remark}

\begin{proposition}[$a_{\lambda}$-isomorphism region]
\label{prop alambda iso}
Let $V$ be a $G$-representation such that $|V|\neq 0,1,2$. Then the multiplication map $a_\lambda\colon H\M_V\to H\M_{V-\lambda}$ is an isomorphism of Mackey functors.
\end{proposition}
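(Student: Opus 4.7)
The plan is to exploit the cofiber sequence of $G$-spectra
\[
S(\lambda)_+ \to S^0 \to S^\lambda,
\]
in which the second map is, by Definition \ref{def a classes}, a representative of the Euler class $a_\lambda$. Smashing this sequence with $H\M$ and applying $\upi_{V}$ produces a long exact sequence of $G$-Mackey functors
\[
\cdots \to \upi_{V+1-\lambda}H\M \to H\M_V(S(\lambda)_+) \to \upi_V H\M \xrightarrow{a_\lambda} \upi_{V-\lambda}H\M \to H\M_{V-1}(S(\lambda)_+) \to \cdots,
\]
in which the middle arrow is multiplication by $a_\lambda$. Thus the proposition will follow as soon as the two flanking Mackey functors $H\M_V(S(\lambda)_+)$ and $H\M_{V-1}(S(\lambda)_+)$ both vanish.

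That vanishing is exactly the content of Lemma \ref{lem homology Slambda}, which asserts $H\M_W(S(\lambda)_+) = 0$ for every $W\in RO(G)$ with $|W|\notin\{0,1\}$. Imposing this for $W=V$ rules out $|V|\in\{0,1\}$, and for $W=V-1$ rules out $|V|\in\{1,2\}$; the union of forbidden total degrees is exactly the excluded set $\{0,1,2\}$ of the hypothesis.

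This is essentially the whole story, so there is no genuine obstacle. The only point worth being careful about is the identification of the connecting map with multiplication by $a_\lambda$: smashing $S^0 \xrightarrow{a_\lambda} S^\lambda$ with $\id_{H\M}$ and taking $\upi_V$ is by construction the multiplication-by-$a_\lambda$ map on $RO(G)$-graded Mackey functor homotopy, so nothing further needs to be verified. Since the long exact sequence is a sequence of Mackey functors, the resulting isomorphism holds at every subgroup level simultaneously, giving the claimed isomorphism of Mackey functors.
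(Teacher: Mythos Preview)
Your argument is correct and is essentially the same as the paper's: both use the cofiber sequence $S(\lambda)_+\to S^0\to S^\lambda$ and invoke Lemma~\ref{lem homology Slambda} to kill the flanking terms when $|V|\notin\{0,1,2\}$. The paper phrases it at the $G$-level first and then remarks that the same holds for every subgroup, while you work directly with the long exact sequence of Mackey functors, but this is only a cosmetic difference.
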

\begin{proof}
Consider the cofiber sequence $S(\lambda)_+\to S^0\to S^{\lambda}$. Taking $V$-th homology we obtain
\[
\begin{tikzcd}
H\M^G_{V}(S(\lambda)_+)\rar& H\M^G_V\rar&H\M^G_{V-\lambda}\rar& H\M^G_{V-1}(S(\lambda)_+).
\end{tikzcd}
\]
If $|V|\neq 0,1,2$ the outer groups are zero by Lemma \ref{lem homology Slambda}. This is also true when we replace $G$ by any subgroup of $G$. Thus the claim follows.
\end{proof}

\begin{remark}
    Note that traditionally the term `$a_{\lambda}$-periodic' classes is used for classes that support an infinite $a_{\lambda}$-tower. From the above Proposition, we know non-trivial classes of total dimensions $\leq -1$ are $a_{\lambda}$-periodic.
\end{remark}

\begin{proposition}
\label{prop action of alambda}
Let $V$ be a $G$-representation such that $|V|=0$ and recall that $\res(V)$ and $\tr(V)$ are structure maps of $H\M_V$. Then:
\begin{enumerate}
\item $H\M^G_{V-\lambda}\cong \coker(\tr(V)^G_e)$ and the map $a_\lambda\colon H\M^G_V\to H\M^G_{V-\lambda}$ is the projection onto the cokernel.
\item $H\M^G_{V+\lambda}\cong \ker(\res(V)^G_e)$ and the map $a_\lambda\colon H\M^G_{V+\lambda}\to H\M^G_{V}$ is the inclusion of the kernel.
\end{enumerate}
The Mackey functor structures on $H\M^G_{V-\lambda}$ and $H\M^G_{V+\lambda}$ are induced from that of $H\M_V^G$.
\end{proposition}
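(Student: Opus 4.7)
The plan is to apply the cofiber sequence $S(\lambda)_+\to S^0\to S^\lambda$, take $\pi_\star^H$ of its smash with $H\M$ for every subgroup $H$ of $G$, and combine the resulting long exact sequences with Lemma~\ref{lem homology Slambda}. For $H\neq e$ the representation $\lambda|_H$ is the faithful irreducible $2$-dimensional $H$-representation, so Lemma~\ref{lem homology Slambda} applies verbatim; for $H=e$ the splitting $S(\lambda)_+|_e\simeq S^0\vee S^1$ together with the connectivity of $H\M$ gives the same vanishing. Since $|V|=0$ yields $|V+\lambda|=2$ and $|V-1|=-1$, the groups $H\M^H_{V+\lambda}(S(\lambda)_+)$ and $H\M^H_{V-1}(S(\lambda)_+)$ vanish for every $H$, and the two relevant long exact sequences simplify to
\[
0\to H\M^H_{V+\lambda}\xrightarrow{a_\lambda} H\M^H_V\xrightarrow{\partial} H\M^H_{V+\lambda-1}(S(\lambda)_+)
\]
and
\[
H\M^H_V(S(\lambda)_+)\xrightarrow{f} H\M^H_V\xrightarrow{a_\lambda} H\M^H_{V-\lambda}\to 0.
\]

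For part~(1), I would identify the image of $f$ with $\im(\tr(V)^H_e)$. The inclusion of the $0$-skeleton $G/e_+\hookrightarrow S(\lambda)_+$ composed with the cofiber map $S(\lambda)_+\to S^0$ equals the augmentation $G/e_+\to S^0$, whose effect on $\pi_V^H(-\wedge H\M)$ is precisely the Mackey-theoretic transfer $\tr(V)^H_e\colon\M(e)\to H\M^H_V$. The intermediate map $\M(e)=\pi_V^H(G/e_+\wedge H\M)\to H\M^H_V(S(\lambda)_+)$ is surjective (by Lemma~\ref{lem homology Slambda} the target is $\M(e)/(\gamma\mp 1)$, with sign determined by the orientability of $V$), so $\im(f)=\im(\tr(V)^H_e)$. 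This yields $H\M^H_{V-\lambda}\cong\coker(\tr(V)^H_e)$, with $a_\lambda$ identified as the quotient map.

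For part~(2), I would identify $\partial$ with $\res(V)^H_e$. Restricting the whole cofiber sequence to $e$-level produces $S^1_+\to S^0\to S^2$, and the corresponding connecting map $\partial_e$ is an isomorphism $\M(e)\xrightarrow{\cong}\M(e)$ identifiable with the identity via the $S^1$-summand of $S^1_+$. Naturality of $\partial$ under $\res^H_e$ then gives $\iota\circ\partial=\partial_e\circ\res(V)^H_e=\res(V)^H_e$, where $\iota\colon H\M^H_{V+\lambda-1}(S(\lambda)_+)\hookrightarrow\M(e)$ is the inclusion of the (possibly twisted) fixed-point subgroup computed in Lemma~\ref{lem homology Slambda}. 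Injectivity of $\iota$ forces $\ker(\partial)=\ker(\res(V)^H_e)$, and combined with the injectivity of $a_\lambda$ in the LES above this yields $H\M^H_{V+\lambda}\cong\ker(\res(V)^H_e)$, with $a_\lambda$ the inclusion of the kernel.

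The Mackey functor structures follow automatically: since $a_\lambda$ is a morphism of Mackey functors, surjective in part~(1) and injective in part~(2), the restriction and transfer maps on $H\M_{V-\lambda}$ (respectively $H\M_{V+\lambda}$) are necessarily induced from those on $H\M_V$ through the quotient (respectively kernel) description. The main obstacle is the identification of the connecting map $\partial$ with $\res(V)^H_e$ in part~(2); whereas part~(1) admits a direct geometric interpretation via the $0$-skeleton collapse, $\partial$ is accessible only through the $e$-level computation and naturality diagram chase sketched above.
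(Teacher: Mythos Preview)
Your proof is correct and follows the same overall strategy as the paper: the cofiber sequence $S(\lambda)_+\to S^0\to S^\lambda$ together with Lemma~\ref{lem homology Slambda}, with the transfer in part~(1) identified via the $0$-skeleton inclusion $G/e_+\hookrightarrow S(\lambda)_+$.

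The only genuine difference is in part~(2). The paper dispatches it with ``a similar analysis by taking $V$-th cohomology'': passing to $F(-,H\M)$ turns the cofiber sequence around so that the relevant map $H\M^G_V\to H\M^{-V}(S(\lambda)_+)$ is induced directly by $S(\lambda)_+\to S^0$, and the same $0$-skeleton trick (now via $S^0\to G/e_+$, which induces restriction) identifies it as $\res(V)^G_e$. Your route stays in homology and identifies the \emph{connecting} map $\partial$ instead, using naturality under $\res^H_e$ and the fact that $\partial^e$ is an isomorphism at the underlying level. This works and is self-contained, but it is slightly more delicate precisely because connecting maps are harder to name; the paper's dualization avoids that by making the map a direct one. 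Both approaches yield the Mackey-functor statement immediately from the surjectivity/injectivity of $a_\lambda$ as a map of Mackey functors, exactly as you note.
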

\begin{proof}
For point (1), consider the long exact sequence obtained by smashing cofiber sequence $S(\lambda)_+\to S^0\to S^{\lambda}$ with $S^V\wedge H\M$:
\[
\begin{tikzcd}
H\M^G_{V}(S(\lambda)_+)\rar& H\M^G_V\rar&H\M^G_{V-\lambda}\rar& H\M^G_{V-1}(S(\lambda)_+).
\end{tikzcd}
\]
Since $|V|=0$, by Lemma \ref{lem homology Slambda} the rightmost group is 0. To understand the first map in the sequence above, consider the following commutative diagram of cofiber sequences:
\[
\xymatrix{
G/e_+\ar[r]\ar[d]&S^0\ar[r]\ar[d]& S^{\lambda}_{(1)}\ar[d]\\
S(\lambda)_+\ar[r]&S^0\ar[r]&S^{\lambda}
}
\]
where $S^\lambda_{(1)}$ denotes the $1$-skeleton of $S^\lambda$. The left vertical map is the inclusion of the bottom cell. Taking $V$-th homology we obtain the following commutative diagram:
\[
\begin{tikzcd}
M(e)\rar\dar&H\M^G_V\rar\dar["="]&H\M^G_V(S^\lambda_{(1)})\rar\dar&0\\
M(e)/(1-\gamma)\rar& H\M^G_V\rar&H\M^G_{V-\lambda}\rar& 0.
\end{tikzcd}
\]
The first top horizontal map exhibits the transfer in the Mackey functor $H\M^\bullet_V$. Therefore the bottom left map is the map on $M(e)/1-\gamma$ induced by $\tr(V)$. Thus the claim follows.

Point (2) comes from a similar analysis by taking $V$-th cohomology.

For the Mackey functor structure in (1), by induction, we only need to consider the restriction and transfer between adjacent subgroups. In the following diagram
\[
\xymatrix{
H\M_V^G\ar[r]\ar@/^/[d]^{\res^G_{G'}}&H\M_V^G/{\tr(V)^G_e}\ar@/^/[d]^{\overline{\res^G_{G'}}}\\
H\M_V^{G'}\ar[r]\ar@/^/[u]^{\tr^G_{G'}}&H\M_V^{G'}/{\tr(V)^{G'}_e}\ar@/^/[u]^{\overline{\tr^G_{G'}}}}
\]
both horizontal arrows are the natural surjections, and the result follows. The Mackey functor structure in (2) are completely analogous.
\end{proof}

Combining Propositions \ref{prop alambda iso} and \ref{prop action of alambda} we can deduce groups $H\M^G_{nV}$ for $V$ orientable and irreducible.

\begin{proposition}
\label{prop hmnV}

Let $V$ be an irreducible actual representation. Then we have that
\[
H\M^G_{nV}=\left\{
\begin{array}{ll}
\M(G) & n=0\\
\coker(\tr_{G_V}^G)&n<0\\
\ker(\res^G_{G_V})&n>0.
\end{array}
\right.
\]
Moreover,  we have that:
\begin{itemize}
\item the map $a_V\colon H\M^G_0\to H\M^G_{-V}$ is the projection onto the cokernel;
\item the map $a_V\colon H\M^G_{V}\to H\M^G_0$ is the inclusion of the kernel;
\item the multiplication $a_V\colon H\M^G_{nV}\to H\M^G_{(n-1)V}$ is an isomorphism for $n\in\mathbb{Z}$ such that $n\neq 0,1$.
\end{itemize}
\end{proposition}
\begin{proof}
    We firstly consider the case $V=\lambda$, the faithful representation. We then have that $H\M^G_0=\M(G)$ by the defining property of Eilenberg-MacLane spectra. For $n<0$, the formula follows from Proposition \ref{prop action of alambda} in case of $n=-1$ and from Proposition \ref{prop alambda iso} for $n<-1$. The case $n>0$ follows analogously.

    Now assume that $V$ is an irreducible representation. Then we have that $H\M^G_{nV}\cong H(\M^{\sharp G_V})^{G/G_V}_{nV}$. Assume that $V\neq \alpha$. The representation $V$ is $G/G_V$-faithful, so by the previous paragraph we get that
\[
H(\M^{\sharp G_V})^{G/G_V}_{nV}=\left\{
\begin{array}{ll}
\M(G) & n=0\\
\coker(\tr(\M^{\sharp G_V})_e^{G/G_V})&n<0\\
\ker(\res(\M^{\sharp G_V})_e^{G/G_V})&n>0.
\end{array}
\right.
\]
However, since the transfers and restrictions in the Mackey functor $\M^{\sharp G_V}$ are defined as transfers and restrictions in $\M$, so we obtain the abelian group structures as stated. For the multiplications by $a_V$, as above, we can assume $V=\lambda$, the faithful representation. Consider the following cellular filtration of $S^{m\lambda},m\geq 1$:
\begin{equation*}
\begin{tikzcd}[column sep = tiny]
    S^0\ar[rr] &&S^{m\lambda}_{(1)}\ar[rr]\ar[ld]&&S^{m\lambda}_{(2)}\ar[ld]\ar[rr]&&S^{m\lambda}_{(3)}\ar[rr]\ar[ld]&&\cdots\ar[ld]\\
    &{G/e}_+\wedge S^1\ar[ul,dashed] &&{G/e}_+\wedge S^2\ar[ul,dashed] &&{G/e}_+\wedge S^3\ar[ul,dashed] &&{G/e}_+\wedge S^4\ar[ul,dashed]
\end{tikzcd}
\end{equation*}
Smash the diagram with $H\M$ and take $\pi_0$. Since $\pi_{0}^G G/e_+\wedge S^i\wedge H\M=0$ for $i\neq 0$, we see that the first horizontal map induce the projection onto the cokernel of $\tr^G_e$, and the other horizontal maps induce isomorphisms. This proves the claim about $a_V:H\M^G_{nV}\to H\M^G_{(n-1)V}$ when $n\leq 0$. For $n>0$, we can dualize the cellular structure.

The case $V=\alpha$ follows from the similar considerations.
\end{proof}

Using the multiplication by the element $a_\lambda$, we can prove the following vanishing result.
\begin{lemma}[Vanishing]
\label{lem vanishing}
Let $V=a_0+a_1\alpha+\sum_{i=2}^ka_i\lambda_i$ be such that one of the following conditions is satisfied:
\begin{enumerate}
\item $a_0>0$, $a_0+a_1>0$ and $a_0+a_1+\sum_{i=2}^ja_i>0$ for all $2\leq j\leq k$, or
\item $a_0<0$, $a_0+a_1<0$ and $a_0+a_1+\sum_{i=2}^ja_i<0$ for all $2\leq j\leq k$.
\end{enumerate}
Then $H\M_V=0$.
\end{lemma}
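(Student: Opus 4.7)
The plan is to argue by induction on $k$, using the $a_\lambda$-isomorphism (Proposition \ref{prop alambda iso}) to reduce $V$ to a representation with no $\lambda_k$-component, then invoking the induction method (Corollary \ref{reduction to quotient group}) to pass to a smaller cyclic $2$-group, eventually reaching an actual representation with a positive trivial summand where vanishing is immediate from the cellular cochain complex. I focus on condition (1); condition (2) is treated by the symmetric argument using cellular chains in place of cochains.

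For the base case $k=1$, $G = C_2$ and $V = x + y\alpha$ with $x > 0$ and $x + y > 0$. The $e$-level is immediate since $\pi^e_V H\M = \pi_{x+y} H\M = 0$. For the $C_2$-level, the cofiber sequence $C_2/e_+ \to S^0 \to S^\alpha$ produces an $a_\alpha$-isomorphism $H\M_V \cong H\M_{V + \alpha}$ whenever $|V + \alpha| \notin \{0,1\}$, which is ensured by $x + y \geq 1$. I iterate this to transport $V$ to an actual representation with trivial summand of dimension $x \geq 1$; applying Proposition \ref{shearing} to $S^V \simeq S^x \wedge S^{V-x}$ then shows that the reduced cellular cochain complex of $S^V$ lives in degrees $\geq x$, forcing $\widetilde{H}^0_{C_2}(S^V;\M) = \pi^{C_2}_V H\M = 0$.

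For the inductive step from $C_{2^{k-1}}$ to $C_{2^k}$, I first reduce to the case $a_k = 0$. If $a_k > 0$, I iteratively apply $a_\lambda\colon H\M_{V-m\lambda} \to H\M_{V-(m+1)\lambda}$; if $a_k < 0$, I apply the inverse direction. Each application requires $|V - m\lambda|$ to avoid $\{0,1,2\}$, and the positivity of the partial sums in condition (1) is exactly what ensures we stay in the isomorphism regime along the whole reduction path — in any case where a boundary degree is hit, Proposition \ref{prop action of alambda} provides the relevant epi/mono together with a kernel/cokernel description that still allows the vanishing to be transported. Once $a_k = 0$, either $V$ is a trivial representation $V = x$ with $x > 0$ (in which case $\pi^G_V H\M = 0$ immediately) or $G_V \neq e$; Corollary \ref{reduction to quotient group} then identifies $H\M^G_V \cong (H\M^{\sharp G_V})^{G/G_V}_V$, a computation over the cyclic $2$-group $G/G_V$ of smaller order. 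The representation $V$, viewed as a $G/G_V$-rep, inherits condition (1) (the relevant partial sums form a subset of the original), and the inductive hypothesis gives the vanishing. For the subgroup levels $K \subsetneq G_V$, the shearing isomorphism (Proposition \ref{shearing}) turns $S^V$ into a trivial sphere at $K$-level, and the vanishing follows from the Eilenberg-MacLane property of $H\M$ together with the positivity of the underlying degree.

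The main technical obstacle is careful bookkeeping of the intermediate total degrees $|V - m\lambda|$ to ensure they avoid the critical set $\{0,1,2\}$ where $a_\lambda$ fails to be an isomorphism, and verifying that each reduction step preserves the hypothesis (so that condition (1) for $G$ implies the analogous condition for $G/G_V$). Careful use of Proposition \ref{prop action of alambda} is required at the boundary. Condition (2) is handled entirely symmetrically by reversing the direction of the $a_\lambda$-tower and dualizing the cellular argument.
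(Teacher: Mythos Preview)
Your approach is essentially the same as the paper's: induct on $k$, strip off the $\lambda_k$-component using the $a_\lambda$-isomorphism, then invoke the truncation (Corollary~\ref{reduction to quotient group}) to descend to a smaller cyclic $2$-group. The paper starts the induction at the trivial group rather than $C_2$ and always truncates by $C_2$ rather than by $G_V$, but these are cosmetic differences.

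One point deserves tightening. Your hedge ``in any case where a boundary degree is hit, Proposition~\ref{prop action of alambda} provides the relevant epi/mono\ldots'' is unnecessary and suggests you have not verified that it never happens. It never does: writing $V' = V - a_k\lambda_k$, condition~(1) for $j=k-1$ gives $|V'| > 0$, and the sources of the $a_\lambda$-maps along the chain between $V$ and $V'$ have total degrees lying strictly between $\min(|V|,|V'|)$ (exclusive, shifted by $+2$) and $\max(|V|,|V'|)$, hence all are $\geq 3$. So Proposition~\ref{prop alambda iso} applies at every step and no appeal to Proposition~\ref{prop action of alambda} is needed. The paper's proof makes exactly this observation (``the target has total dimension greater than $0$'') without spelling out the intermediate degrees; you should do the same rather than leave the boundary case as a loose end.

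A second minor remark: since you truncate by $G_V$ rather than $C_2$, you may land on a quotient strictly smaller than $C_{2^{k-1}}$, so your induction should be phrased as strong induction on $k$ (or equivalently on $|G|$), not just ``from $C_{2^{k-1}}$ to $C_{2^k}$.''
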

\begin{proof}
We will proceed by induction on the order of $G$. The claim is true for the trivial group. Now assume that the statement is correct for $G'=C_{2^{k-1}}$. Thus the vanishing is true for all subgroup level of the Mackey functor. Consider the first case with all sums simultaneously greater than $0$. We then have that
\[
H\M^G_{a_0+a_1\alpha+a_2\lambda_2+\ldots +a_{k-1}\lambda_{k-1}}\cong (H\M^{\sharp C_2})^{G/{C_2}}_{a_0+a_1\alpha+a_2\lambda_2+\ldots +a_{k-1}\lambda_{k-1}},
\]
and the latter is $0$ by the inductive assumption (note that the last index in the sum missing). By Proposition \ref{prop alambda iso} the map
\[
a_{\lambda_k}^{a_{k}}\colon H\M^G_V\to H\M^G_{a_0+a_1\alpha+a_2\lambda_2+\ldots +a_{k-1}\lambda_{k-1}}
\]
is an isomorphism, because the target has total dimension greater than $0$. Thus the claim follows. The proof of the second case is completely analogous.
\end{proof}

Using  Vanishing Lemma \ref{lem vanishing} we can describe $a_{\lambda_i}$-isomorphism regions for $2\leq i\leq k$.

\begin{corollary}[$a_{\lambda_s}$-isomorphism region]
For $C_{p^k}$ with $p$ odd, let $V=a_0+\sum_{i=1}^ka_i\lambda_i$. For each $s$ such that $ 1\leq s\leq k$, multiplication by $a_{\lambda_s}\colon H\M_{V}\to H\M_{V-\lambda_s}$ is an isomorphism of Mackey functors when one of the following conditions is satisfied:
\begin{enumerate}
    \item $a_0+2\sum_{i=1}^{j}a_i>2$ for all $s\leq j\leq k$, or
    \item $a_0+2\sum_{i=1}^{j}a_i<0$ for all $s\leq j\leq k$.
\end{enumerate}
For $C_{2^k}$, let $V=a_0+a_1\alpha+\sum_{i=2}^ka_i\lambda_i$. For each $s$ such that $2\leq s\leq k$, multiplication by $a_{\lambda_s}\colon H\M_V\to H\M_{V-\lambda_s}$ is an isomorphism of Mackey functors when one of the following conditions is satisfied:
\begin{enumerate}
    \item $a_0+a_1+2\sum_{i=2}^{j}a_i>2$ for all $s\leq j\leq k$, or
    \item $a_0+a_1+2\sum_{i=2}^{j}a_i<0$ for all $s\leq j\leq k$.
\end{enumerate}
Multiplication by $\aal\colon H\M_{V}\to H\M_{V-\alpha}$ is an isomorphism if one of the following conditions is satisfied:
\begin{enumerate}
    \item $a_0+a_1>0,a_0+a_1+2\sum_{i=2}^{j}a_i>1$ for all $2\leq j\leq k$, or
    \item $a_0+a_1<0,a_0+a_1+2\sum_{i=2}^{j}a_i<0$ for all $2\leq j\leq k$.
\end{enumerate}
\end{corollary}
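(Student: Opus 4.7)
The approach is to prove each multiplication-by-Euler-class isomorphism via the cofiber sequence
\[
S(\lambda_s)_+ \to S^0 \xrightarrow{a_{\lambda_s}} S^{\lambda_s}
\]
(respectively $S(\alpha)_+ = G/G'_+\to S^0 \to S^{\alpha}$ for the $a_\alpha$ case). Smashing with $S^V\wedge H\M$ yields the long exact sequence
\[
\upi_V(H\M\wedge S(\lambda_s)_+)\to \upi_V H\M \xrightarrow{a_{\lambda_s}} \upi_{V-\lambda_s}H\M \to \upi_{V-1}(H\M\wedge S(\lambda_s)_+),
\]
so it suffices to show that under the corollary's hypotheses both outer Mackey functors vanish.

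To handle the outer Mackey functors, I would exploit the standard $G$-CW structure on $S(\lambda_s)$, whose $0$-skeleton is $G/G_{\lambda_s}$ and which has a single additional equivariant $1$-cell of the form ${G/G_{\lambda_s}}_+\wedge D^1$. This produces the cofiber sequence
\[
{G/G_{\lambda_s}}_+ \to S(\lambda_s)_+ \to {G/G_{\lambda_s}}_+\wedge S^1,
\]
and the associated long exact sequence reduces vanishing of $\upi_V$ and $\upi_{V-1}$ of $H\M\wedge S(\lambda_s)_+$ to vanishing of $\upi_W(H\M\wedge {G/G_{\lambda_s}}_+)$ for $W\in\{V, V-1, V-2\}$. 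By the shearing isomorphism (Proposition \ref{shearing}), these latter Mackey functors are built, level by level, from the groups $H\M^K_{W|_K}$ with $K\subseteq H := G_{\lambda_s}$, so everything reduces to applying the Vanishing Lemma \ref{lem vanishing} at the smaller cyclic group $H = C_{2^{k-s}}$.

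The main computation is then the explicit decomposition of $V|_H$ into irreducibles of $H$: the irreducibles $1$ and $\alpha$ restrict to $H$-trivial lines (using $s\geq 1$), each $\lambda_i$ with $i\leq s$ restricts to a trivial $2$-dimensional $H$-representation, $\lambda_{s+1}|_H = 2\alpha_H$, and $\lambda_i|_H = \lambda_{i-s}^{(H)}$ for $i>s+1$. Thus $V|_H$ has fixed-point dimension $x+y+2\sum_{i=2}^s a_i$, sign coefficient $2a_{s+1}$, and $\lambda_j^{(H)}$-coefficient $a_{s+j}$. The hypothesis $x+y+2\sum_{i=2}^j a_i>2$ for $s\leq j\leq k$ is designed precisely so that the positivity conditions of Lemma \ref{lem vanishing} at $H$ are satisfied for each of $V|_H$, $(V-1)|_H$, and $(V-2)|_H$, and at every subgroup $K\subseteq H$ the required inequalities only weaken under further restriction. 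The $a_\alpha$ case is handled identically using the cofiber sequence ${G/G'}_+\to S^0\to S^\alpha$ and applying the Vanishing Lemma at $G' = C_{2^{k-1}}$, with the conditions $x+y>0$ and $x+y+2\sum_{i=2}^j a_i>1$ providing exactly what is needed after the analogous restriction computation.

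The main obstacle is the careful bookkeeping: verifying that the corollary's hypotheses, with their strict slack of ``$>2$'' (respectively ``$>1$'') over the bare ``$>0$'' of the Vanishing Lemma, correctly cover all three shifts $V,V-1,V-2$ (respectively both shifts $V,V-1$) and survive restriction to every subgroup $K\subseteq H$. Once this matching of inequalities is set up, the rest of the argument is essentially formal from the cofiber sequences and the shearing isomorphism.
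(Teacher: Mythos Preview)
Your proposal is correct and follows essentially the same route as the paper's proof: both use the cofiber sequence $S(\lambda_s)_+\to S^0\to S^{\lambda_s}$, decompose $S(\lambda_s)_+$ via the two-cell filtration ${G/G_{\lambda_s}}_+\to S(\lambda_s)_+\to {G/G_{\lambda_s}}_+\wedge S^1$, and then reduce to the Vanishing Lemma at the smaller group $G_{\lambda_s}=C_{2^{k-s}}$ after computing the restriction $i^*_{C_{2^{k-s}}}V$. Your account is in fact more explicit than the paper's about the restriction of each $\lambda_i$ to $H$ and about why the slack of ``$>2$'' (respectively ``$>1$'') is exactly what is needed to cover the shifts $V,V-1,V-2$ (respectively $V,V-1$); the paper states the intermediate condition for $\underline{H}_V(S(\lambda_s))=0$ and leaves the final shift implicit.
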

\begin{proof}
The proof for $p$ odd and $p=2$ are analogous. We will assume $p=2$, and use the notation $\lambda_1=2\alpha$. For each $2\leq s\leq k$, the sphere $S(\lambda_s)$ is a two-cell complex, and we have a cofiber sequence
\[
{C_{2^k}/C_{2^{k-s}}}_+\to S(\lambda_s)_+\to {C_{2^k}/C_{2^{k-s}}}_+\wedge S^1.
\]
We get the following long exact sequence of Mackey functors
\[
\cdots\to H\M_{V}(C_{2^k}/C_{2^{k-s}})\to H\M_V(S(\lambda_s))\to H\M_{V-1}(C_{2^k}/C_{2^{k-s}})\to\cdots.
\]
The two Mackey functors on the outside only cares about the $C_{2^{k-s}}$-representation $i^*_{C_{2^{k-s}}}(V)=a_0+a_1+2a_2+\cdots+2a_s+a_{s+1}\lambda_1+\cdots+a_k\lambda_{k-s}$. By the previous lemma, we deduce that $\underline{H_V(S(\lambda_s))}=0$ when one of the following is satisfied
\begin{enumerate}
    \item $a_0+a_1+2\sum_{i=2}^{j}a_i>1$ for all $s\leq j\leq k$, or
    \item $a_0+a_1+2\sum_{i=2}^{j}a_i<0$ for all $s\leq j\leq k$.
\end{enumerate}
Then the cofiber sequence
\[
S(\lambda_s)_+\to S^0\to S^{\lambda_s}
\]
provides us with the long exact sequence
\[
\cdots\to H\M_V(S(\lambda_s))\to H\M_V\xrightarrow{a_{\lambda_s}} H\M_{V-\lambda_s}\to H\M_{V-1}(S(\lambda_s))\cdots.
\]
$a_{\lambda_s}$ is an isomorphism when the two outside Mackey functors vanish. And we have finished the proof for $a_{\lambda_s}$. For $\aal$, the only difference is that the sphere $S(\alpha)=C_{2^k}/C_{2^{k-1}}$ is a one-cell complex. All the proofs are along the same line.
\end{proof}

Using the vanishing property, we can also deduce when multiplication by $u_{\lambda_s}$ is an isomorphism. From Theorem \ref{keythm} we deduce
\begin{equation}
\label{Cui}
    \pi_*^G H\M\wedge S^{\lambda_i-2}=H_{2+*}^G(S^{\lambda_i};\M)=\begin{cases}
  \M(C_{2^{k-i}})^G,  & *=0; \\
  \ker(\tr^G_{C_{2^{k-i}}})/(1-\gamma), & *=-1; \\
  \coker(\tr^G_{C_{2^{k-i}}}), & *=-2;\\
  0 & \text{otherwise}.
\end{cases}
\end{equation}
Then we look at the cofiber sequence
\begin{equation}
    H\M\xrightarrow{u_{\lambda_i}}H\M\wedge S^{\lambda_i-2}\to C_{u_{\lambda_i}}.
\end{equation}
In general, the cofiber $C_{u_{\lambda_i}}$ is not an Eilenberg-MacLane spectrum (it is one in the special case of $\M=\Z$), but it has a finite Postnikov tower, which enables us to extend the vanishing results to this case. Before that, first let us notice that the map $u_{\lambda_i}$ is an $C_{2^{k-i}}$-equivariant equivalence, thus $i^*_{C_{2^{k-i}}}C_{u_{\lambda_i}}\simeq *$. Since $S^{\lambda_s}$ is built from $S^0$ out of cells of orbit type $C_{2^k}/C_{2^{k-s}}$, we deduce

\begin{lemma}
The spectrum $C_{u_{\lambda_i}}$ is $a_{\lambda_i},a_{\lambda_{i+1}},\cdots, a_{\lambda_k}$-local.
\end{lemma}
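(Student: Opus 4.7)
The plan is to reduce $a_{\lambda_j}$-locality of $C_{u_{\lambda_i}}$, for each $j \geq i$, to the vanishing of $C_{u_{\lambda_i}} \wedge S(\lambda_j)_+$. Indeed, applying $(-) \wedge C_{u_{\lambda_i}}$ to the standard cofiber sequence $S(\lambda_j)_+ \to S^0 \to S^{\lambda_j}$ produces a cofiber sequence whose second map is multiplication by $a_{\lambda_j}$; so that map is an equivalence exactly when the first term $S(\lambda_j)_+ \wedge C_{u_{\lambda_i}}$ is contractible.

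The next step uses the observation recorded immediately before the lemma: $u_{\lambda_i}$ is a $\ck{k-i}$-equivalence (since restriction to $\ck{k-i} = G_{\lambda_i}$ trivializes $\lambda_i$, and the shearing isomorphism of Proposition \ref{shearing} makes $u_{\lambda_i}$ a unit there), hence $i^*_{\ck{k-i}} C_{u_{\lambda_i}} \simeq *$. Consequently, for any subgroup $H \subseteq \ck{k-i}$ we also have $i^*_H C_{u_{\lambda_i}} \simeq *$ by further restriction.

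It remains to analyze $S(\lambda_j)_+$ through its isotropy. Topologically $S(\lambda_j)$ is a circle on which $G$ acts through its quotient $G/\ck{k-j} \cong \ck{j}$ by rotation, so it admits a $G$-CW structure consisting of one $0$-cell and one $1$-cell, both of orbit type $G/\ck{k-j}$. This gives a cofiber sequence
\[
{G/\ck{k-j}}_+ \to S(\lambda_j)_+ \to {G/\ck{k-j}}_+ \wedge S^1.
\]
For $j \geq i$ we have $\ck{k-j} \subseteq \ck{k-i}$, so by the shearing isomorphism
\[
{G/\ck{k-j}}_+ \wedge C_{u_{\lambda_i}} \simeq G_+ \wedge_{\ck{k-j}} i^*_{\ck{k-j}} C_{u_{\lambda_i}} \simeq *,
\]
and similarly for its suspension. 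Both outer terms of the smashed cofiber sequence therefore vanish, so $S(\lambda_j)_+ \wedge C_{u_{\lambda_i}} \simeq *$, which completes the proof. No step here is a real obstacle; once the restriction-vanishing $i^*_{\ck{k-i}} C_{u_{\lambda_i}} \simeq *$ is in hand, the argument is bookkeeping of isotropy via shearing along the evident cell structure of $S(\lambda_j)$.
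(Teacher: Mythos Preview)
Your argument is correct and follows essentially the same approach as the paper: both use that $i^*_{\ck{k-i}} C_{u_{\lambda_i}} \simeq *$ together with the fact that, for $j \geq i$, the cofiber (equivalently, the fiber $S(\lambda_j)_+$) of $a_{\lambda_j}\colon S^0 \to S^{\lambda_j}$ is built from cells of orbit type $G/\ck{k-j}$ with $\ck{k-j} \subseteq \ck{k-i}$, which then vanish after smashing with $C_{u_{\lambda_i}}$ via the shearing isomorphism. You have simply unpacked the cell structure of $S(\lambda_j)_+$ explicitly where the paper states the corresponding fact for $S^{\lambda_j}$ in one sentence.
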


\begin{corollary}[$u_{\lambda_s}$-isomorphism region]
For $p$ odd, let $V=a_0+\sum_{i=1}^ka_i\lambda_i$. For each $s$ such that $1\leq s\leq k$, multiplication by $u_{\lambda_s}\colon H\M_{V}\to H\M_{V+2-\lambda_s}$ is an isomorphism of Mackey functors when one of the following conditions is satisfied:
\begin{enumerate}
    \item $a_0>0$ and $a_0+2\sum_{i=1}^{j}a_i>0$ for all $1\leq j\leq s-1$, or
    \item $a_0<-3$ and $a_0+2\sum_{i=1}^{j}a_i<-3$ for all $1\leq j\leq s-1$.
\end{enumerate}
If $p=2$, let $V=a_0+a_1\alpha+\sum_{i=2}^ka_i\lambda_i$. For each $s$ such that $2\leq s\leq k$, multiplication by $u_{\lambda_s}\colon H\M_V\to H\M_{V+2-\lambda_s}$ is an isomorphism of Mackey functors when one of the following conditions is satisfied:
\begin{enumerate}
    \item $a_0>0$ and $a_0+a_1>0,a_0+a_1+2\sum_{i=2}^{j}a_i>0$ for all $2\leq j\leq s-1$, or
    \item $a_0<-3$ and $a_0+a_1<-3,a_0+a_1+2\sum_{i=2}^{j}a_i<-3$ for all $2\leq j\leq s-1$.
\end{enumerate}
Multiplication by $\uta\colon H\M_V\to H\M_{V+2-2\alpha}$ is an isomorphism when $a_0>0$ or $a_0<-3$.
\end{corollary}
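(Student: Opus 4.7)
The plan is to combine the cofiber sequence
\[
H\M \xrightarrow{u_{\lambda_s}} H\M \wedge S^{\lambda_s - 2} \to C_{u_{\lambda_s}}
\]
with the $a$-locality of $C_{u_{\lambda_s}}$ established in the preceding lemma. Taking homotopy gives the long exact sequence of Mackey functors
\[
\upi_{V+1}C_{u_{\lambda_s}} \to \upi_V H\M \xrightarrow{u_{\lambda_s}} \upi_{V+2-\lambda_s} H\M \to \upi_V C_{u_{\lambda_s}},
\]
so multiplication by $u_{\lambda_s}$ is an isomorphism as soon as both $\upi_V C_{u_{\lambda_s}}$ and $\upi_{V+1} C_{u_{\lambda_s}}$ vanish; the task is reduced to this vanishing.

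Since $C_{u_{\lambda_s}}$ is $a_{\lambda_j}$-local for every $j \geq s$ (and additionally $a_{2\alpha}$-local in the $u_{2\alpha}$ case), multiplication by each such Euler class is invertible on $\upi_\star C_{u_{\lambda_s}}$, which gives
\[
\upi_V C_{u_{\lambda_s}} \cong \upi_{V + \sum_{j\geq s} n_j \lambda_j} C_{u_{\lambda_s}}
\]
for any integers $n_j$. The strategy is to set $V' = V + N\sum_{j\geq s}\lambda_j$ for large $N$ (with sign matching the hypothesis) and apply the long exact sequence at the shifted grading. It then suffices to show that the four neighboring $H\M$-homotopy Mackey functors $\upi_{V'} H\M$, $\upi_{V'-1} H\M$, $\upi_{V'+2-\lambda_s} H\M$, and $\upi_{V'+3-\lambda_s} H\M$ simultaneously vanish by Vanishing Lemma \ref{lem vanishing}.

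In case (1), adding a large positive multiple of $\lambda_s + \cdots + \lambda_k$ preserves the (already positive) partial sums for $j < s$ and makes the tail partial sums arbitrarily positive, so all four shifted representations land in the positive vanishing region. In case (2), subtracting a large multiple makes the tail partial sums very negative; the buffer $x < -3$ is precisely calibrated so that the combined shifts by $+1$, $+2-\lambda_s$, and $+3-\lambda_s$ in the long exact sequence still yield all of $x$, $x+y$, and $x+y+2\sum_{i=2}^j a_i$ strictly negative, keeping the Vanishing Lemma applicable. The $u_{2\alpha}$ case is identical, using the extra $a_{2\alpha}$-locality to freely shift $y$ by even integers independently of $\lambda_i$-coefficients, which is why only the condition on the fixed dimension $x$ survives; the $p$-odd case is identical with $\alpha$ removed. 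The main obstacle is the careful bookkeeping verifying that the asymmetric thresholds $>0$ versus $<-3$ are exactly sufficient to keep all four shifted representations in the vanishing region across the long exact sequence.
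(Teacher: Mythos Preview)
Your overall strategy matches the paper's: use the cofiber sequence, reduce to the vanishing of $\upi_{V}C_{u_{\lambda_s}}$ and $\upi_{V+1}C_{u_{\lambda_s}}$, then exploit the $a$-locality of $C_{u_{\lambda_s}}$ together with the Vanishing Lemma. But there is an off-by-one error in case~(1).

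You sandwich $\upi_{V'}C_{u_{\lambda_s}}$ between $\upi_{V'+2-\lambda_s}H\M$ and $\upi_{V'-1}H\M$ via the long exact sequence, so the fixed dimensions you must control for the four terms are $\{x-1,\,x,\,x+2,\,x+3\}$. In case~(2) the largest of these is $x+3$, and $x+3<0$ is exactly the hypothesis $x<-3$. In case~(1), however, the smallest is $x-1$, and the Vanishing Lemma requires $x-1>0$; the hypothesis $x>0$ does not give this when $x=1$ (and likewise when any partial sum $x+y+2\sum_{i=2}^{j}a_i$ equals~$1$). Your sentence ``adding a large positive multiple \ldots\ preserves the (already positive) partial sums for $j<s$'' overlooks that the $-1$ shift from the connecting map drops those partial sums below the threshold. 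The $a$-periodicity shift only touches coefficients of $\lambda_j$ for $j\geq s$ and cannot fix this.

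The paper avoids the problem by using the Postnikov tower of $C_{u_{\lambda_s}}$ rather than the long exact sequence. Since $\upi_*C_{u_{\lambda_s}}$ is concentrated in integer degrees $-2,-1,0$, there is a three-step filtration with associated graded pieces $\Sigma^{-t}H\underline{M}_t$ for $t=0,1,2$; the resulting spectral sequence requires only $\upi_{V+t}H\underline{M}_t=0$ for $t\in\{0,1,2\}$ (and the same shifted by~$1$), so the fixed-dimension shifts are $\{0,1,2,3\}$ with no negative term. This is exactly why the positive threshold is $x>0$ while the negative one is $x<-3$: the asymmetry comes from the Postnikov layers lying in nonpositive degrees. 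Your LES argument proves the statement under the slightly stronger hypothesis $x>1$ (and partial sums $>1$) in case~(1); to obtain the sharp bound you need to pass to the Postnikov filtration.
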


\begin{proof}
Again we assume $p=2$ and the odd primary case is analogous. We also assume $2\leq s\leq k$ since the $u_{2\alpha}$ case is proved in the same way. Consider the long exact sequence
\begin{equation*}
    \underline{\pi}_{V+1}C_{u_{\lambda_s}}\to H\M_{V}\xrightarrow{u_{\lambda_s}} H\M_{V+2-\lambda_s}\to \underline{\pi}_{V}C_{u_{\lambda_s}}.
\end{equation*}
We want to find the range where the two outside Mackey functors vanish. By $a_{\lambda_s},\cdots,a_{\lambda_k}$-periodicity of $\pi_{\star}C_{u_{\lambda_s}}$, we can assume $V=x+y\alpha+\sum_{i=2}^{s-1}a_i\lambda_i$. From the computation \eqref{Cui}, we know $\pi_{*}C_{u_{\lambda_s}}$ is concentrated at $*=-2,-1,0$, and we get its Postnikov tower
\[
\xymatrix{
C_{u_{\lambda_i}}=P^0\ar[d]&\ar[l]H\underline{M_3}\\
P^{-1}\ar[d]&\ar[l]\Sigma^{-1} H\underline{M_2}\\
P^{-2}&\ar[l]^{\simeq}\Sigma^{-2} H\underline{M_1}
}
\]
where $\underline{M_i},i=1,2,3$ are some Mackey functors. Then we get a strongly convergent spectral sequence
\[
\underline{E^1}_{V,s}=\underline{\pi}_V \bar{P}^s\Rightarrow \underline{\pi}_V C_{u_{\lambda_s}}
\]
where $\bar{P}^s$ is the fiber of $P^s\to P^{s-1}$. The conditions stated are the conditions for $\underline{E^1}_{V,s}=0$ and $\underline{E^1}_{V+1,s}=0$.
\end{proof}

\begin{remark}
For $\M=\Z$, from the formula (\ref{Cui}) and the fact that $u_{\lambda_s}:\Z=H\Z_0\xrightarrow{\cong} H\Z_{2-\lambda_s}=\Z$ we know $C_{u_{\lambda_s}}\simeq \Sigma^{-2}H\underline{{M_1}}$. We can get a finer condition than the one stated in the theorem, which is left to the reader.
\end{remark}

\subsection{\texorpdfstring{$au$}{}-relation (a.k.a gold relation)}
In this subsection, we generalize the gold relation \cite[Lemma 3.6]{HHRb} for $\R=\Z$ to the general case.
\begin{proposition}[$au$-relation]
\label{prop au relation}
Let $V,W$ be chosen from $\lambda_i$ for $1\leq i\leq k$ such that $G_V\subset G_W$. Let $\R$ be a Green functor. Then there is the following relation in $H\R_{\star}^G$
\[
a_V\cdot\tr_{G_V}^{G_W}(1)u_W=a_W\cdot u_V.
\]
\end{proposition}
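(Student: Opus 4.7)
My plan is to exhibit an explicit $G$-equivariant map $f\colon S^V\to S^W$ that simultaneously detects both sides of the relation, and then to prove the identity by restricting to $G_W$-level. Writing $V=\lambda(m_V)$ and $W=\lambda(m_W)$, the inclusion $G_V\subset G_W$ forces $m_V\mid m_W$, so setting $n=m_W/m_V=[G_W:G_V]$ I would take $f$ to be the pointed $G$-map induced by $z\mapsto z^n$ on complex coordinates. This $f$ is $G$-equivariant of non-equivariant degree $n$ and fixes $\{0,\infty\}=(S^V)^G$, so $f\circ a_V=a_W$ as pointed $G$-maps $S^0\to S^W$. Passing to the Hurewicz image and writing $[f]_{H\R}\in H\R^G_{V-W}$ for the image of $[f]\in\pi^G_{V-W}S^0$, this gives $a_W=a_V\cdot[f]_{H\R}$ in $H\R^G_{-W}$, and hence $a_W\cdot u_V=a_V\cdot[f]_{H\R}\cdot u_V$. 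All that remains is the identity
\[
[f]_{H\R}\cdot u_V=\tr^{G_W}_{G_V}(1)\cdot u_W\qquad\text{in }H\R^G_{2-W}.
\]

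I would prove this by restricting to $G_W$-level. By Proposition \ref{prop mackey uv} we have $H\R^G_{2-W}\cong\R(G_W)^G$, which embeds into $H\R^{G_W}_{2-W}=\R(G_W)$ via $\res^G_{G_W}$ (inclusion of $G$-fixed points), so an equality at $G_W$-level will suffice. There the orientation class $u_W$ becomes the unit of $\R(G_W)$, so the right-hand side reduces to $\tr^{G_W}_{G_V}(1)$, and the left-hand side is represented by the composition $(f\wedge\id_{H\R})\circ u_V\colon S^2\to S^V\wedge H\R\to S^W\wedge H\R$.

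To identify this composition, I would invoke the cellular description of $S^V$ at the $G_W$-level: by Lemma \ref{CellularLemma} and Theorem \ref{keythm}, the cellular chain complex is
\[
\R(G_W)\xleftarrow{\tr^{G_W}_{G_V}}\R(G_V)\xleftarrow{1-\gamma}\R(G_V),
\]
whose top homology is $\R(G_V)^{G_W}$, with $\res^G_{G_W}(u_V)$ corresponding to $1$. The map $f$ on the top cell is the orbit projection $(G_W/G_V)_+\wedge S^2\to(G_W/G_W)_+\wedge S^2$, which on Mackey-functor chains is precisely the transfer $\tr^{G_W}_{G_V}\colon\R(G_V)\to\R(G_W)$. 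Passing to $H_2$ then gives $(f\wedge\id)_*\bigl(\res^G_{G_W}(u_V)\bigr)=\tr^{G_W}_{G_V}(1)$, completing the $G_W$-level check.

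The main obstacle I foresee is this last identification: translating the ring product $[f]_{H\R}\cdot u_V$ into the cellular pushforward by $f$, and then recognizing the top-cell map as the Mackey transfer. One must carry out the comparison exactly at the $G_W$-level, since further restriction to $G_V$ is not generally injective on $\R(G_W)$. Once everything aligns, assembling the two halves yields $a_V\cdot\tr^{G_W}_{G_V}(1)\cdot u_W=a_V\cdot[f]_{H\R}\cdot u_V=a_W\cdot u_V$, as required.
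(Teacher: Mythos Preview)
Your argument is correct and takes a genuinely different route from the paper's proof. The paper first establishes the special case $V=\lambda_k$ (the faithful representation) by an explicit computation in the universal Green functor $\A$: it identifies each of the four maps $a_\lambda,u_W,a_W,u_\lambda$ concretely as maps between known groups (Lemma \ref{lemma hm2Vlambda} computes $H\A^G_{2-W-\lambda}$), and then reads off the two composites by hand (Lemma \ref{lemma au relation free representation}). The general case is reduced to this one by induction on the group order, passing to $G/G_V$ and replacing $\A$ by the truncated Green functor $\A^{\sharp G_V}$.

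Your approach is more geometric and handles all pairs $(V,W)$ and all Green functors $\R$ at once, with no induction and no reduction to $\A$. The power map $z\mapsto z^n$ is exactly the right object: it witnesses $a_W=a_V\cdot[f]$ on the nose at the level of pointed $G$-maps, and the remaining identity $[f]\cdot u_V=\tr^{G_W}_{G_V}(1)\,u_W$ is detected after restricting to $G_W$ because $\res^G_{G_W}$ on $H\R^G_{2-W}\cong\R(G_W)^G$ is the inclusion of fixed points (Proposition \ref{prop mackey uv}). Your identification of the top-cell map as the fold $(G_W/G_V)_+\wedge S^2\to S^2$, hence the transfer on chains, is correct: in polar coordinates $z\mapsto z^n$ carries each lune homeomorphically (degree $+1$) onto the single $2$-cell of $S^W$, so the induced map on $C_2$ is the canonical projection $\nabla$, which on $\pi_0^{G_W}(-\wedge H\R)$ is $\tr^{G_W}_{G_V}$.

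What each approach buys: the paper's computation produces, as a byproduct, the explicit group $H\A^G_{2-\lambda-W}$ and the description of $a_W$ and $u_W$ as transfer and restriction on it, information reused in later sections. Your argument is shorter and more conceptual for the relation itself, and makes transparent that the gold relation is really a statement about the degree-$n$ self-map of the sphere intertwining Euler classes and pushing orientation classes forward by transfer.
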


Note since $H\R^G_{2-W}=\R(G_W)^G$ by Proposition \ref{prop mackey uv}, and $\tr^{G_W}_{G_V}(1)\in \R(G_W)^G$, the class $\tr_{G_V}^{G_W}(1)u_W\in H\R^G_{2-W}$ makes sense.

\begin{lemma}
\label{lemma hm2Vlambda}
Let $W$ be one of $\lambda_i$ for $1\leq i\leq k$. Then $H\M^G_{2-W-\lambda}\cong \frac{\M(G_W)^G}{\zeta_{\log_2|G/G_W|}\tr^{G_W}_e(\M(e))}$.
\end{lemma}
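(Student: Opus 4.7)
The plan is to apply the machinery developed earlier in the paper, specifically Proposition \ref{prop action of alambda} and Proposition \ref{prop mackey uv}, to reduce the computation of $H\M^G_{2-W-\lambda}$ to a description of a single transfer map, which then unwinds to the claimed formula.

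First I would set $V = 2 - W$. Since $W$ is an irreducible two-dimensional representation, we have $|V| = 0$. This is precisely the setting of Proposition \ref{prop action of alambda}(1), which yields
\[
H\M^G_{2-W-\lambda} \cong \coker\bigl(\tr(2-W)^G_e\colon H\M^e_{2-W} \to H\M^G_{2-W}\bigr).
\]
Next I would use Proposition \ref{prop mackey uv} to identify both terms: at the trivial subgroup we have $H\M^e_{2-W} = \M(e)$, and at the top level we have $H\M^G_{2-W} = \M(G_W)^G$. The proposition also tells us exactly how the transfer $\tr(2-W)^G_e$ factors through the intermediate subgroup $G_W$: for $K \subset H \subset G_W$ the transfer is induced from $\M$, while for $G_W \subset K \subset H$ the transfer is given by the twisted formula $\tr^{C_{2^i}}_{C_{2^{i-1}}}(x) = (1 + \gamma^{2^{k-i}})\cdot x$ together with transitivity.

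The key calculation is then identifying the composite $\tr(2-W)^G_{G_W}$ as multiplication by the norm element $\zeta_s$, where $s = \log_2|G/G_W|$. Writing $G_W = C_{2^{k-s}}$ and composing the twisted transfers along the chain $C_{2^{k-s}} \subset C_{2^{k-s+1}} \subset \cdots \subset C_{2^k}$, one gets multiplication by
\[
(1+\gamma)(1+\gamma^2)(1+\gamma^4)\cdots(1+\gamma^{2^{s-1}}).
\]
A short inductive check (or iterated expansion) confirms this product equals $1 + \gamma + \gamma^2 + \cdots + \gamma^{2^s-1} = \zeta_s$. One should also briefly verify that $\zeta_s \cdot \tr^{G_W}_e(x)$ actually lies in $\M(G_W)^G$; this follows because $\gamma \cdot \zeta_s \equiv \zeta_s \pmod{1-\gamma^{2^s}}$, and $\gamma^{2^s}$ acts trivially on $\M(G_W)$.

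Putting the two halves together, $\tr(2-W)^G_e = \zeta_s \cdot \tr^{G_W}_e$ as a map $\M(e) \to \M(G_W)^G$, so
\[
H\M^G_{2-W-\lambda} \cong \coker\bigl(\zeta_s\,\tr^{G_W}_e\bigr) = \frac{\M(G_W)^G}{\zeta_{\log_2|G/G_W|}\,\tr^{G_W}_e(\M(e))},
\]
as claimed. I do not anticipate any real obstacle here; the only subtlety is keeping track of the two different types of transfer appearing in $H\M_{2-W}$ (those inherited from $\M$ below $G_W$ versus the twisted ones above $G_W$) and packaging the upper composite into $\zeta_s$, which is essentially a bookkeeping exercise.
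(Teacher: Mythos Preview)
Your argument is correct. The paper's own proof is a one-liner, simply citing Theorem~\ref{keythm}: one writes down the cellular chain complex of $S^{W+\lambda}$ and reads off the degree-$2$ homology as $\ker(1-\gamma)/\operatorname{im}(\zeta_s\tr^{G_W}_e)=\M(G_W)^G/\zeta_s\tr^{G_W}_e(\M(e))$.

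Your route factors through Proposition~\ref{prop action of alambda} and Proposition~\ref{prop mackey uv} instead of appealing to Theorem~\ref{keythm} directly. Since both of those propositions are themselves consequences of the cellular description in Theorem~\ref{keythm}, the two arguments are really the same computation at different levels of packaging: the paper reads the answer straight from the chain complex, while you first identify $H\M^G_{2-W-\lambda}$ as $\coker\bigl(\tr(2-W)^G_e\bigr)$ and then unwind that transfer using the Mackey structure already recorded in Proposition~\ref{prop mackey uv}. Your version has the advantage of making explicit where the $\zeta_s$ comes from (as the product $(1+\gamma)(1+\gamma^2)\cdots(1+\gamma^{2^{s-1}})$ of the twisted transfers above $G_W$), which the paper leaves implicit.
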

\begin{proof}
Follows from Theorem \ref{keythm}.
\end{proof}

\begin{lemma}
\label{lemma au relation free representation}
Let  $W$ be  one of $\lambda_i,1\leq i\leq k$, then we have the following relation in $H\R_{\star}^G$
\[
a_\lambda\cdot \tr_e^{G_W}(1)u_W=a_W\cdot u_\lambda.
\] 
\end{lemma}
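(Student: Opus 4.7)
My plan is to compute both sides of the equality within the identification $H\R^G_{2-W-\lambda} \cong \R(G_W)^G / \zeta_i \tr^{G_W}_e \R(e)$ furnished by Lemma \ref{lemma hm2Vlambda} (where $i := \log_2 |G/G_W|$), and to show each equals the class represented by $\tr^{G_W}_e(1)$.

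The left-hand side is the easy side. Under the identification $H\R^G_{2-W} \cong \R(G_W)^G$ from Proposition \ref{prop mackey uv}, the element $\tr^{G_W}_e(1)\cdot u_W$ corresponds to $\tr^{G_W}_e(1)$; this lies in $\R(G_W)^G$ because $1\in\R(e)$ is $G$-fixed and transfer commutes with the Weyl action. Since $|2-W|=0$, Proposition \ref{prop action of alambda} says that multiplication by $a_\lambda$ is the projection onto the cokernel of the Mackey-level transfer $\tr(2-W)^G_e$. Using the Mackey-functor structure on $H\R_{2-W}$ described in Proposition \ref{prop mackey uv} (where the transfers from $G_W$ up to $G$ are multiplications by $(1+\gamma^{2^{k-j}})$, composing to $\zeta_i$), one identifies this transfer with $x\mapsto\zeta_i\tr^{G_W}_e(x)$, in agreement with Lemma \ref{lemma hm2Vlambda}. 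Thus the left-hand side is $\overline{\tr^{G_W}_e(1)}$ in the quotient.

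For the right-hand side I would work at the chain level. Using the cell structure from Lemma \ref{CellularLemma} for $S^{W+\lambda}$, the 2-skeleton is $S^W$ and by Theorem \ref{keythm} the complex reads
\[
\R(G)\xleftarrow{\tr^G_{G_W}}\R(G_W)\xleftarrow{1-\gamma}\R(G_W)\xleftarrow{\partial_3}\R(e)\xleftarrow{1-\gamma}\R(e),
\]
so $H^G_2(S^{W+\lambda};\R)=\R(G_W)^G/\zeta_i\tr^{G_W}_e\R(e)$, matching the target group. The class $a_W u_\lambda$ is the image of the fundamental cycle $u_\lambda=1\in C_2(S^\lambda;\R)=\R(e)$ under the chain map $f_\bullet\colon C_*(S^\lambda;\R)\to C_*(S^{W+\lambda};\R)$ induced by $a_W\wedge 1\colon S^\lambda\to S^{W+\lambda}$. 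Since $a_W$ is the bottom-cell inclusion, $f_0=\id$ on $\R(G)$; the chain-map condition $\tr^G_{G_W}\circ f_1=\tr^G_e=\tr^G_{G_W}\circ\tr^{G_W}_e$ then forces $f_1=\tr^{G_W}_e$ up to $\ker\tr^G_{G_W}$, and a similar analysis in dimension $2$ (using that $\tr^{G_W}_e$ commutes with $1-\gamma$ in the Weyl action) gives $f_2=\tr^{G_W}_e$ modulo the ambiguity $\R(G_W)^G\subset C_2(S^{W+\lambda};\R)$. Consequently $f_2(1)=\tr^{G_W}_e(1)$, so $a_W u_\lambda=\overline{\tr^{G_W}_e(1)}$, matching the left-hand side.

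The main obstacle is pinning down $f_2$ unambiguously. The chain-map conditions alone admit two non-chain-homotopic lifts (the transfer lift $(\id,\tr^{G_W}_e,\tr^{G_W}_e)$ and the trivial one $(\id,0,0)$) because the Tate obstruction $\hat H^0(G/G_W;\R(G_W))$ need not vanish. To select the correct one geometrically, I would either construct an explicit cellular approximation of $a_W\wedge 1$ tracking the bottom cell of $S^W$ through the quotient $G_+\to G/(G_W)_+$ (which realizes the transfer at chain level), or first verify the relation in the universal Green functor $\A$ by a direct Burnside-ring/cross-product calculation and then transport it to arbitrary $\R$ via the unit map $H\A\to H\R$, using naturality of $a_V$, $u_V$, and $\tr^{G_W}_e(1)$.
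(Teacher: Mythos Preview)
Your computation of the left-hand side is correct and matches the paper's identifications: under $H\R^G_{2-W}\cong\R(G_W)^G$ the element $\tr^{G_W}_e(1)u_W$ is just $\tr^{G_W}_e(1)$, and $a_\lambda$ is the projection to the quotient of Lemma~\ref{lemma hm2Vlambda}.

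The gap you flag on the right-hand side is real, and your diagnosis is mostly accurate, though one detail is off: the ``trivial lift'' $(\id,0,0)$ is \emph{not} a chain map unless $\tr^G_e=0$, since the degree-$1$ square forces $\tr^G_{G_W}\circ f_1=\tr^G_e$. For $\R=\A$ one even has $\ker\tr^G_{G_W}=0$, so $f_1=\tr^{G_W}_e$ is forced. The genuine ambiguity lives in $f_2$: the chain condition only says $(1-\gamma)f_2=f_1(1-\gamma)$, which for $\A$ (trivial Weyl action) allows $f_2$ to be \emph{any} homomorphism $\mathbb{Z}\to\A(G_W)$, and chain homotopies only move $f_2(1)$ within $\im\partial_3$. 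So algebraic constraints alone genuinely do not determine $[f_2(1)]\in H_2$.

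The paper resolves this not by constructing a cellular approximation for your filtration, but by \emph{switching the filtration}. Instead of the standard cell structure on $S^{W+\lambda}$ with $2$-skeleton $S^W$, it uses
\[
S^0\subset S^\lambda_{(1)}\subset S^\lambda\subset S^\lambda\wedge S^W_{(1)}\subset S^\lambda\wedge S^W,
\]
in which $S^\lambda$ is the $2$-skeleton. Now $a_W\colon S^\lambda\to S^{\lambda+W}$ is \emph{literally} the skeletal inclusion, so no cellular approximation is needed. Running the spectral sequence of this filtration on $\pi_2^G(-\wedge H\A)$, the paper finds an extension
\[
0\to \A(e)/|G/G_W|\xrightarrow{\;h\;} H\A^G_{2-\lambda-W}\to \A(G_W)/\tr^{G_W}_e\to 0
\]
where the left term comes from the top cell of $S^\lambda$ (which carries $u_\lambda$), and the inclusion $h$ is induced by $\tr^{G_W}_e$. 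This identifies $a_W$ on $H\A^G_{2-\lambda}$ with $\tr^{G_W}_e$ and gives $a_Wu_\lambda=\overline{\tr^{G_W}_e(1)}$ directly. Your option~2 (reduce to $\A$) is exactly what the paper does first; the filtration swap is the missing idea that makes it go through.
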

\begin{proof}
We only need to prove for the universal case $\R=\A$. We claim that the following commutative diagrams are isomorphic
\[
\xymatrix{
H\A_0^G\ar[r]^{a_{\lambda}}\ar[d]^{u_{\lambda}} &H\A_{-\lambda}^G\ar[d]^{u_W} &\A(G)\ar[d]^{\res^G_e}\ar[r]^{\text{proj}}&\A(G)/{\tr^G_e}\ar[d]^{\res^G_{G_W}}\\
H\A_{2-\lambda}^G\ar[r]^-{a_W}&H\A_{2-\lambda-W}^G,&\A(e)\ar[r]^-{\tr^{G_W}_e}&\A(G_W)/|G/G_W|\tr^{G_W}_e.
}
\]
The corresponding abelian groups are isomorphic by previous computations, and that $\A(H)$ has trivial Weyl action for each $H\subset G$. That $a_{\lambda}$ is projection and $u_{\lambda}$ is $\res^G_e$ are proved in Proposition \ref{prop action of alambda} and Remark \ref{remark action of uv} respectively.

To show that $u_W$ is induced from $\res^G_{G_W}$, we look at the following isomorphic commutative diagrams
\[
\xymatrix{
H\A_{-\lambda}^G\ar[r]^{\res^G_{G_W}}\ar[d]^{u_{W}} &H\A_{-\lambda}^{G_W}\ar[d]^{u_W} &\A(G)/\tr^G_e\ar[d]^{\res^G_{G_W}}\ar[r]^{\res^G_{G_W}}&\A(G_W)/{\tr^{G_W}_e}\ar[d]^{1}\\
H\A_{2-\lambda-W}^G\ar[r]^{\res^G_{G_W}}&H\A_{2-\lambda-W}^{G_W},&\A(G_W)/{|G/G_W|\tr_e^{G_W}}\ar[r]^-{f}&\A(G_W)/\tr^{G_W}_e.
}
\]
The map $f$ is induced from the identity of $\A(G_W)$ thus send $\bar{x}$ to $\bar{x}$, and we deduce that $u_W$ is induced from $\res^G_{G_W}$.

We are left to show that $a_W$ is induced from $\tr_e^{G_W}$. We consider the following filtration of $S^{\lambda}\wedge S^W$

\[
\begin{tikzcd}[column sep = tiny]S^0\ar[rr]&&S^{\lambda}_{(1)}\ar[rr]\ar[ld]&&S^{\lambda}\ar[rr]\ar[ld]&[-7pt]&[-7pt]S^\lambda\wedge S^W_{(1)}\ar[rr]\ar[ld]&[-7pt]&[-7pt]S^{\lambda}\wedge S^W\ar[ld]\\
&G/e_+\wedge S^1\ar[ul,dashed] &&G/e_+\wedge S^2\ar[ul,dashed]&&{G/G_W}_+\wedge S^{\lambda}\wedge S^1\ar[ul,dashed]&&{G/G_W}_+\wedge S^{\lambda}\wedge S^2\ar[ul,dashed].
\end{tikzcd}
\]

The map $a_W:S^{\lambda}\to S^{\lambda}\wedge S^W$ sits in the first row. By running a spectral sequence, we can see that the classes that contribute to $\pi_2^G (S^{\lambda}\wedge S^W\wedge H\A)$ are subquotients of 
$\pi_2^G(G/e_+\wedge S^2\wedge H\A)$, $\pi_2^G({G/G_W}_+\wedge S^{\lambda}\wedge S^1\wedge H\A)$ and $\pi_2^G({G/G_W}_+\wedge S^{\lambda}\wedge S^2\wedge H\A)$.
We have
\begin{equation*}
    \begin{aligned}
        &\pi_2^G(G/e_+\wedge S^2\wedge H\A)=\A(e),\\
        &\pi_2^G({G/G_W}_+\wedge S^{\lambda}\wedge S^1\wedge H\A)=H\A^{G_W}_1(S^{\lambda})={\ker(\tr_e^{G_W})}/{(1-\gamma)}=0,\\
        &\pi_2^G({G/G_W}_+\wedge S^{\lambda}\wedge S^2\wedge H\A)=H\A^{G_W}_0(S^{\lambda})=\A(G_W)/\tr^{G_W}_e.
    \end{aligned}
\end{equation*}
We already know that
\[
\pi_2^G (S^{\lambda}\wedge S^W\wedge H\A)=H\A^G_{2-W-\lambda}\cong \frac{\A(G_W)}{|G/G_W|\tr^{G_W}_e}\cong \A(G_W)/x\oplus \mathbb{Z}/{|G/G_W|}\langle x\rangle
\]
by the previous lemma, where $x$ is represented by the free $G_W$-set $G_W/e=\tr_e^{G_W}(1)$. The differential
\[
d_1:\A(e)=\pi_2^G({G/G_W}_+\wedge S^{\lambda}\wedge H\A)\to \pi_2(G/e_+\wedge S^2\wedge H\A)=\A(e)
\]
must be $|G/G_W|$, and we have the following extension
\[
0\to \A(e)/{|G/G_W|}\xrightarrow{h} H\A_{2-\lambda-W}^G\to \A(G_W)/\tr^{G_W}_e\to 0
\]
where $h$ is induced from $\tr^{G_W}_e$. Thus $a_W$ is induced from $\tr^{G_W}_e$.

In $\A(G_W)/x\oplus \mathbb{Z}/{|G/G_W|}\langle x\rangle$ we have that $a_{\lambda}u_W=(1,0)$ and $a_{W}u_{\lambda}=(0,x)$, where $1$ is $G_W/{G_W}$. We deduce
\[
a_Wu_\lambda=a_\lambda\cdot  xu_W=a_\lambda\cdot \tr^{G_W}_e(1) u_W.
\]

\end{proof}
\begin{proof}[Proof of Proposition \ref{prop au relation}]
Let $V,W$ be chosen from $\lambda_i,1\leq i\leq k$ with $e\neq G_V\subset G_W$. Note $\A^{\sharp G_V}$ is a Green functor for $G/G_V$. Regard $V,W$ as $G/G_V$-representations and by abuse of notations, we will still write $V,W$. By induction on $k$, we get
\[
a_V\cdot \tr_{e}^{G_W/{G_V}}(1)u_W=a_Wu_V
\]
in $(H\A^{\sharp G_V})_{\star}$ since $V$ is a faithful ${G/G_V}$-representation. We have
\[
\tr_{e}^{G_W/{G_V}}(1)=\tr_{G_V}^{G_W}(1)
\]
where the transfer on the right is in $\A$. Thus the claim follows.
\end{proof}

\section{Induction theorems}
In this section, we prove our induction theorems. The results fit $H\M_{\star}$ in exact sequences where the rest of the terms are computable in terms of $H\left(\M^{\sharp C_2}\right)_{\star}$ and $H\left(\M\downarrow^G_{C_2}\right)_\star$. Thus we can carry out the computations by induction on the order of $G=C_{2^k}$. The initial input is the structure of $H\M^{C_2}_{\star}$, which is completely determined by the first author in \cite{Sikora22}. 

Recall that Proposition \ref{prop alambda iso} reduces our consideration of $H\M_{V}$ to those $V\in RO(G)$ such that $|V|=2,1,0,-1,-2$. Proposition \ref{prop action of alambda} then tells us how to further reduce to those $V$ with $|V|=1,0,-1$. We first relate the homotopy groups in degrees $V$ with $|V|=\pm 1$.

\begin{proposition}
\label{prop deg pm1}
    Let $V\in RO(G)$ with $|V|=1$. If $V$ is orientable, then we have the following exact sequence of abelian groups
    \[
    0\to \im(\res(V+1-\lambda)_e^G)\to \M(e)^G\to H\M_V^G\xrightarrow{a_{\lambda}}H\M_{V-\lambda}^G\to \M(e)/{(1-\gamma)}\to \im(\tr(V-1)^G_e)\to 0.
    \]
    If $V$ is non-orientable, then we have the following exact sequence of abelian groups
    \[
    0\to \im(\res(V+1-\lambda)_e^G)\to _{\zeta_1}\M(e)\to H\M_V^G\xrightarrow{a_{\lambda}}H\M_{V-\lambda}^G\to \M(e)/{(1+\gamma)}\to \im(\tr(V-1)^G_e)\to 0.
    \]
    Each exact sequence can be upgraded to an exact sequence of Mackey functors.
\end{proposition}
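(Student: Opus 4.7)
The plan is to apply the cofiber sequence $S(\lambda)_+ \to S^0 \xrightarrow{a_\lambda} S^\lambda$ to $H\M$. Smashing with $H\M$ and taking the homotopy Mackey functors $\upi_\bullet$ yields a long exact sequence
\[
\cdots \to H\M_{V+1-\lambda} \to H\M_V(S(\lambda)_+) \to H\M_V \xrightarrow{a_\lambda} H\M_{V-\lambda} \to H\M_{V-1}(S(\lambda)_+) \to H\M_{V-1} \to \cdots
\]
of $G$-Mackey functors. Because $|V+1|=2$ and $|V-2|=-1$, Lemma \ref{lem homology Slambda} forces the neighboring terms $H\M_{V+1}(S(\lambda)_+)$ and $H\M_{V-2}(S(\lambda)_+)$ to vanish. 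Cutting out the resulting six-term piece and rewriting the outermost morphisms as a cokernel on the left and a kernel on the right yields
\[
0 \to \coker\bigl(a_\lambda\colon H\M_{V+1}\to H\M_{V+1-\lambda}\bigr) \to H\M_V(S(\lambda)_+) \to H\M_V \xrightarrow{a_\lambda} H\M_{V-\lambda} \to H\M_{V-1}(S(\lambda)_+) \to \ker\bigl(a_\lambda\colon H\M_{V-1}\to H\M_{V-1-\lambda}\bigr) \to 0.
\]

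The next step is to identify the four unknown terms. Since $|V+1-\lambda|=0$, Proposition \ref{prop action of alambda}(2) describes the map $a_\lambda\colon H\M_{V+1}\to H\M_{V+1-\lambda}$ as the inclusion of $\ker(\res(V+1-\lambda)^G_e)$, so its cokernel equals $\im(\res(V+1-\lambda)^G_e)$. Since $|V-1|=0$, Proposition \ref{prop action of alambda}(1) describes $a_\lambda\colon H\M_{V-1}\to H\M_{V-1-\lambda}$ as the projection onto $\coker(\tr(V-1)^G_e)$, so its kernel is $\im(\tr(V-1)^G_e)$. For the middle two entries, Lemma \ref{lem homology Slambda} applied at $|V|=1$ and $|V-1|=0$—noting that $V$ and $V-1$ share the same orientability—gives $FP(\M(e))$ and $FQ(\M(e))$ in the orientable case, or $TFP(\M(e))$ and $TFQ(\M(e))$ in the non-orientable case. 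Evaluated at $G/G$ and using $\zeta_1=1+\gamma$, these groups become $\M(e)^G$ and $\M(e)/(1-\gamma)$ respectively $\prescript{}{\zeta_1}{\M(e)}$ and $\M(e)/(1+\gamma)$, matching the two displayed sequences.

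For the Mackey functor upgrade the construction has been equivariant throughout: the cofiber sequence is a cofiber sequence of $G$-spectra, so the six-term sequence exists in the category of Mackey functors; Lemma \ref{lem homology Slambda} already supplies Mackey functor descriptions of the middle terms; and the identifications of the outermost terms as images of $\res$ and $\tr$ factor Mackey functor maps, hence are compatible with restrictions and transfers level by level. I do not anticipate a genuine obstacle, only careful bookkeeping of when $1-\gamma$ versus $1+\gamma$ appears. The main subtlety is ensuring consistent orientability conventions when identifying the middle groups in the two cases, since the swap between $FP/FQ$ and $TFP/TFQ$ must occur simultaneously for $V$ and $V-1$.
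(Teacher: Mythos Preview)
Your proposal is correct and follows essentially the same approach as the paper: both use the long exact sequence coming from the cofiber sequence $S(\lambda)_+\to S^0\xrightarrow{a_\lambda}S^\lambda$, invoke Lemma~\ref{lem homology Slambda} to identify the $S(\lambda)_+$ terms, and then apply Proposition~\ref{prop action of alambda} to rewrite the outer $a_\lambda$ maps as the inclusion of $\ker(\res)$ and the projection onto $\coker(\tr)$, yielding $\im(\res)$ and $\im(\tr)$ at the ends. Your presentation is slightly more explicit in isolating the six-term piece via the vanishing of $H\M_{V+1}(S(\lambda)_+)$ and $H\M_{V-2}(S(\lambda)_+)$, but the argument is the same.
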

\begin{proof}
    By Lemma \ref{lem homology Slambda}, when $V$ is orientable,
    the long exact sequence on homology of the cofiber sequence
    \[
    S(\lambda)_+\to S^0\xrightarrow{a_{\lambda}}S^{\lambda}
    \]
    gives us
    \begin{align*}
    &H\M_{V+1}^G\xrightarrow{a_{\lambda}}H\M_{V+1-\lambda}^G\xrightarrow{\res(V+1-\lambda)_e^G} \M(e)^G\to H\M_V^G\xrightarrow{a_{\lambda}}H\M_{V-\lambda}^G\to \M(e)/{(1-\gamma)}\xrightarrow{\tr(V-1)_e^G}\\
    &H\M_{V-1}^G\xrightarrow{a_{\lambda}} H\M_{V-1-\lambda}^G.
    \end{align*}
    
    By Proposition \ref{prop action of alambda}, the first $a_{\lambda}$ can be identified with the inclusion of $\ker(\res(V+1-\lambda)^G_e)$, whose cokernel is $\im(\res(V+1-\lambda)_e^G)$. The third $a_{\lambda}$ can be identified with projection to $\coker(\tr(V-1)^G_e)$, whose kernel is $\im(\tr(V-1)^G_e)$. Replacing $G$ by $H\subset G$, we get and exact sequence of Mackey functors.
    
    The case for non-orientable $V$ is completely analogous.
\end{proof}

\begin{remark}
    Proposition \ref{prop deg pm1} tells us that the homotopy Mackey functors with $|V|=1$ and $|V|=-1$ determine each other up to extension if we can figure out the maps in the long exact sequences and understand the $|V|=0$ grading, since both $(V+1-\lambda)$ and $(V-1)$ have underlying degree $0$ there.
\end{remark}

The following two theorems are the main theorems in this section. For simplicity, if $U\in RO(G)$, we use the same notation for its restriction to subgroups. If $G_U\neq e$, we also use the same notation for the corresponding $G/G_U$-representation. For definitions of Mackey functors $FP$, $FQ$ and $TFP$, $TFQ$ see Definitions \ref{def FP FQ} and \ref{TFP TFQ} respectively.

\begin{theorem}[Induction Theorem A]
\label{thm induction 1}
    Let $U=a_0+a_1\alpha+\Sigma^{k}_{i=2}a_i\lambda_i\in RO(G)$ with $|U|=0$. Then
    \begin{enumerate}
        \item When $a_k=0$, $H\M_U$ can be computed from $H\left(\M\downarrow^G_{C_2}\right)_U$ and $H\left(\M^{\sharp C_2}\right)_U$.\\
        \item When $a_k\leq -1$, if $U$ is orientable, we have the following exact sequence of Mackey functors
        \[
        0\to H\M_{U+{\lambda_k}}\xrightarrow{a_{\lambda}}H\M_U\xrightarrow{\res^{(-)}_e}FP(\M(e))\to H\M_{U-1+\lambda_k}.
        \]
        If $U$ is non-orientable, we have the following exact sequence of Mackey functors
        \[
        0\to H\M_{U+{\lambda_k}}\xrightarrow{a_{\lambda}}H\M_U\xrightarrow{\res^{(-)}_e}TFP(\M(e))\to H\M_{U-1+\lambda_k}.
        \]
        Here $\res^{(-)}_e$ is the map induced from $\res^H_e$ on the $G/H$-level. In each sequence, the first and last Mackey functors can be computed from $H\left(\M^{\sharp C_2}\right)_\star$ and $H\left(\M\downarrow^G_{C_2}\right)_\star$.\\
        \item When $a_k\geq 1$, if $U$ is orientable, we have the following exact sequence of Mackey functors
        \[
        H\M_{U-\lambda_k+1}\to FQ(\M(e))\xrightarrow{\tr^{(-)}_e}H\M_U \xrightarrow{a_{\lambda}}H\M_{U-\lambda_k}\to 0.
        \]
        If $U$ is non-orientable, we have the following exact sequence of Mackey functors
        \[
        H\M_{U-\lambda_k+1}\to TFQ(\M(e))\xrightarrow{\tr^{(-)}_e}H\M_U\xrightarrow{a_{\lambda}}H\M_{U-\lambda_k}\to 0.
        \]
        Here $\tr^{(-)}_e$ is the map that is induced from $\tr^H_e$ on the $G/H$-level. In each sequence, the first and last Mackey functors can be computed from $H\left(\M^{\sharp C_2}\right)_\star$ and $H\left(\M\downarrow^G_{C_2}\right)_\star$.
    \end{enumerate}
\end{theorem}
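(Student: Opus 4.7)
The plan is to handle Case 1 via the reduction-to-quotient-group corollary, and Cases 2 and 3 via the long exact sequence coming from the cofiber sequence $S(\lambda)_+ \to S^0 \to S^\lambda$, where $\lambda = \lambda_k$.

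For Case 1, since $a_k = 0$ means $\lambda_k$ does not appear in $U$, we have $C_2 \subseteq G_U$. By Corollary \ref{reduction to quotient group}, for each $H$ with $C_2 \subseteq H \subseteq G$ the abelian group $\upi_U H\M(G/H)$ agrees with $\upi_U H\M^{\sharp C_2}$ at the corresponding $G/C_2$-level, and the restrictions and transfers between such levels are those of $H\M^{\sharp C_2}$. At the bottom level $G/e$ the value is $\pi^e_{|U|}H\M = \M(e)$, together with the restriction $\res^{C_2}_e$ and transfer $\tr^{C_2}_e$ read off from $\upi_U H\M\downarrow^G_{C_2}$. Composition then supplies all remaining structure maps $\res^H_e$ and $\tr^H_e$.

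For Cases 2 and 3, smashing the cofiber sequence with $H\M$ and taking $\upi_W$ yields the long exact sequence of Mackey functors
\begin{equation*}
\cdots \to \upi_{W+1-\lambda}H\M \to \upi_W(H\M \wedge S(\lambda)_+) \to \upi_W H\M \xrightarrow{a_\lambda} \upi_{W-\lambda}H\M \to \upi_{W-1}(H\M \wedge S(\lambda)_+) \to \cdots,
\end{equation*}
and Lemma \ref{lem homology Slambda} identifies the $S(\lambda)_+$-terms as $FQ/TFQ(\M(e))$ in total degree $0$, $FP/TFP(\M(e))$ in total degree $1$, and $0$ otherwise. For Case 3, I take $W = U$ with $|U| = 0$: the left inner term becomes $FQ(\M(e))$ or $TFQ(\M(e))$, and the right inner term vanishes because $|U-1| = -1$, producing the stated four-term sequence ending in $0$. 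For Case 2, I shift by $\lambda_k$ and take $W = U + \lambda_k$ with $|W| = 2$: the left inner term vanishes and the right inner term is $FP(\M(e))$ or $TFP(\M(e))$, giving the exact sequence beginning with $0$.

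To identify the connecting morphisms with $\res^{(-)}_e$ (resp.\ $\tr^{(-)}_e$), I would analyze the cofiber $S(\lambda)_+ \to S^0$ levelwise via the cellular filtration: on each $G/H$ orbit the boundary in the LES is induced by the inclusion of the top free cell $G/e_+\wedge S^{|W|-1}$, which at the level of Mackey functors implements transfer from (resp.\ restriction to) the $e$-level, matching the recipe of Theorem \ref{keythm}. The claim that the outer Mackey functors are computable from $\upi_\star H\M^{\sharp C_2}$ and $\upi_\star H\M\downarrow^G_{C_2}$ then follows by iterating the theorem to decrease $|a_k|$ (eventually reaching Case 1) and using Proposition \ref{prop deg pm1} for the total-degree-$\pm 1$ positions that appear among the outer terms. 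The main obstacle will be the coherent levelwise identification of these connecting maps with $\res^{(-)}_e$ and $\tr^{(-)}_e$ as morphisms of Mackey functors, since it requires matching the cellular description of the cofiber sequence with the structural maps of $\M$ at all subgroup levels simultaneously.
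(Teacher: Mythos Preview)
Your derivation of the exact sequences in Cases~2 and~3 via the cofiber sequence $S(\lambda_k)_+\to S^0\to S^{\lambda_k}$ and Lemma~\ref{lem homology Slambda}, and your treatment of Case~1 via Corollary~\ref{reduction to quotient group}, match the paper's proof exactly. The identification of the maps with $\res^{(-)}_e$ and $\tr^{(-)}_e$ is not re-argued in the paper's proof either; it was already handled in Proposition~\ref{prop action of alambda}, so your worry about the ``main obstacle'' is unfounded.

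Where your argument diverges is in showing that the \emph{outer} Mackey functors (e.g.\ $\upi_{U+\lambda_k}H\M$ and $\upi_{U-1+\lambda_k}H\M$ in Case~2) are computable from $\upi_\star H\M^{\sharp C_2}$ and $\upi_\star H\M\!\downarrow^G_{C_2}$. You propose to ``iterate the theorem to decrease $|a_k|$'' and invoke Proposition~\ref{prop deg pm1} for the degree-$\pm 1$ terms. This does not quite work: the theorem you are proving only applies to gradings of total degree~$0$, whereas the outer terms have total degrees $2,1$ (Case~2) or $-1,-2$ (Case~3), so you cannot feed them back into the same statement; and Proposition~\ref{prop deg pm1} again produces exact sequences with further outer terms rather than a closed-form reduction. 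The paper instead applies Proposition~\ref{prop alambda iso} directly: setting $W=U-a_k\lambda_k$ (which has no $\lambda_k$), repeated multiplication by $a_{\lambda_k}$ gives isomorphisms $\upi_W H\M\cong \upi_{U+\lambda_k}H\M$ and $\upi_{W-1}H\M\cong \upi_{U+\lambda_k-1}H\M$ (and similarly in Case~3), because every intermediate total degree stays outside $\{0,1,2\}$. Since $W$ has $\lambda_k$-coefficient zero, Case~1 then applies to it. Replacing your iteration argument with this single use of the $a_\lambda$-isomorphism region closes the gap.
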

\begin{proof}
    The case $a_k=0$ follows from Corollary \ref{reduction to quotient group}. It describes the restrictions and transfers between subgroups that do not equal to $e$. The rest of the structure can be deduced from $H\left(\M\downarrow^G_{C_2}\right)_U$.

    Consider the cofiber sequence
    \[
    S(\lambda_k)_+\to S^0 \to S^{\lambda_k}.
    \]
    Since $|U|=0$, we have that $|U+\lambda_k|=2$ and $|U-\lambda_k-1|=-3$. Thus $\upi_{U+\lambda_k}S(\lambda_k)_+\wedge H\M=\upi_{U-\lambda_k-1}S(\lambda_k)_+\wedge H\M=0$. The homology long exact sequence together with Lemma \ref{lem homology Slambda} give us the four exact sequences.

    Now we show that groups other than $H\M_U^G$ in the exact sequences can be reduced to $C_2$- and $G/C_2$-equivariant computations. Since the proofs for orientable and non-orientable $U$ are completely analogous, we assume $U$ is orientable. 
    
    In part $(2)$, if $a_k=-1$, then $U+\lambda_k$ is a $G/C_2$-representation and we know the first and last Mackey functors via $G/C_2$-computations. If $a_k\leq -2$, let $W=U-a_k\lambda_k$. By Proposition \ref{prop alambda iso}, $H\M_W\cong H\M_{U+\lambda_k}$ and the former Mackey functor can be computed $G/C_2$-equivariantly. Similarly for $H\M_{W-1}\cong H\M_{U+\lambda_k-1}$. Then we get the structures of the relevant homotopy Mackey functors above the group $C_2$. The rest of the structure can be deduced from $H\left(\M\downarrow^G_{C_2}\right)_{\star}$.

    In part $(3)$, if $a_k=1$, the computations of the first and last Mackey functors can be reduced to computations over $G/C_2$. If $a_k\geq 2$, let $W=U-a_k\lambda_k$. We have $H\M_{U-\lambda_k}\cong H\M_{W}$ and $H\M_{U-\lambda_k+1}\cong H\M_{W+1}$, thus the computations above the group $C_2$ can be reduced to $G/C_2$. $H\left(\M\downarrow^G_{C_2}\right)_{\star}$ tells us the rest of the structure.
\end{proof}

For the cases $|U|=\pm 1$, we have the following result:
\begin{theorem}[Induction Theorem B]
\label{thm induction 2}
    Let $U=a_0+a_1\alpha+\Sigma^{k}_{i=2}a_i\lambda_i\in RO(G)$. Then we have the following:
    
    When $|U|=1$,
    \begin{enumerate}
        \item If $a_k\leq 0$, then $H\M^G_U$ can be computed from $H\left(\M\downarrow^G_{C_2}\right)_{\star}$ and $H\left(\M^{\sharp C_2}\right)_{\star}$.
        \item If $a_k\geq 1$ and $U$ is orientable, then we have the exact sequence of Mackey functors
        \[
        FP(\M(e))\to H\M_U \xrightarrow{a_{\lambda}} H\M_{U-\lambda_k}\to FQ(\M(e))
        \]
        where the third Mackey functor can be computed from $H\left(\M^{\sharp C_2}\right)_\star$ and $H\left(\M\downarrow^G_{C_2}\right)_{\star}$.
        \item If $a_k\geq 1$ and $U$ is non-orientable, then we have the exact sequence of Mackey functors
        \[
        TFP(\M(e))\to H\M_U \xrightarrow{a_{\lambda}} H\M_{U-\lambda_k}\to TFQ(\M(e))
        \]
        where the third Mackey functor can be computed from $H\left(\M^{\sharp C_2}\right)_{\star}$ and $H\left(\M\downarrow^G_{C_2}\right)_{\star}$.
    \end{enumerate}
    
    When $|U|=-1$,
    \begin{enumerate}
        \item If $a_k\geq 0$, then $\upi_U H\M$ can be computed from $H\left(\M^{\sharp C_2}\right)_{\star}$ and $H\left(\M\downarrow^G_{C_2}\right)_{\star}$.
        \item If $a_k\leq -1$ and $U$ is orientable, then we have the exact sequence of Mackey functors
        \[
        FP(\M(e))\to H\M_{U+\lambda_k} \xrightarrow{a_{\lambda}} H\M_{U}\to FQ(\M(e))
        \]
        where the second Mackey functor can be computed from $H\left(\M^{\sharp C_2}\right)_{\star}$ and $H\left(\M\downarrow^G_{C_2}\right)_{\star}$.
        \item If $a_k\leq -1$ and $U$ is non-orientable, then we have the exact sequence of Mackey functors
        \[
        TFP(\M(e))\to H\M_{U+\lambda_k}\xrightarrow{a_{\lambda}} H\M_{U}\to TFQ(\M(e))
        \]
        where the second Mackey functor can be computed from $H\left(\M^{\sharp C_2}\right)_{\star}$ and $H\left(\M\downarrow^G_{C_2}\right)_{\star}$.
    \end{enumerate}
\end{theorem}
\begin{proof}
    The proof for the cases $|U|=\pm 1$ are analogous, and we only prove it for $|U|=1$. The case $a_k=0$ follows from Corollary \ref{reduction to quotient group}. The exact sequences comes from the cofiber sequence $S(\lambda)_+\to S^0\to S^{\lambda}$.

    If $a_k\leq -1$, let $W=U-a_k\lambda_k$. By Proposition \ref{prop alambda iso}, we have the isomorphism $H\M_{U}\cong H\M_W$, and the latter Mackey functor can be reduced to the homotopy of $H\M^{\sharp C_2}$. Then we understand the Mackey functor structure above $C_2$. $H\left(\M\downarrow^G_{C_2}\right)_{\star}$ tells us the rest.

    If $a_k\geq 1$, let $W=U-a_k\lambda_k$. By Proposition \ref{prop alambda iso}, $H\M_{U-\lambda_k}\cong H\M_{W}$ and the latter Mackey functor can be reduced to the homotopy of $H\M^{\sharp C_2}$. The rest of the structure can be deduced from $H\left(\M\downarrow^G_{C_2}\right)_{\star}$.
\end{proof}

\section{The positive cone of \texorpdfstring{H\M}{} for arbitrary \texorpdfstring{$\M$}{}}
In this section we will show how to use the methods described in previous sections to describe the positive cone of $H\M^G_{\star}$ for an arbitrary Mackey functor $\M$. Here the positive cone $H\M^G_{pos}$ is the subgroup of $H\M^G_\star$ indexed by $V\in RO(G)$ such that $a_i\leq 0$ for $1\leq i\leq k$. As examples, we completely compute the positive cone of $H\Z$ and $H\A$.

\subsection{Structure of the positive cone for arbitrary \texorpdfstring{$\M$}{}}

Note that in the positive cone, all indexing representations will have $|V|\leq 0$, since a $n$-dimensional $G$-CW complex cannot have a non-zero $(n+1)$-th homology. By Proposition \ref{prop alambda iso} and \ref{prop action of alambda}, multiplication by $a_{\lambda_k}$ is an isomorphism when $|V|\leq -1$, and it is projection to $\coker(\tr_e^G)$ when $|V|=0$. So the $V$-th homotopy groups with $|V|=0,-1$ are the only groups we need to describe. Note that when $V$ is orientable with $|V|=0$, this is discussed in Proposition \ref{prop mackey uv}.

\subsubsection{$H\M^G_V$ for non-orientable $V$ with $|V|=0$}
\label{subsec non-orientable V tot deg 0}
We firstly describe $H\M^G_V$ for non-orientable $V$ with $|V|=0$.

\begin{proposition}
\label{prop 1-alpha}
We have that
\[
H\M^G_{1-\alpha}=\ker(\tr_{G'}^G)
\]
and
\[
H\M^G_{n-n\alpha}=\prescript{}{\zeta_1}\M(G')
\]
for odd $n>1$, where $\zeta_1=1+\gamma$ (see Definition \ref{def zeta elements}).
\end{proposition}
\begin{proof}
Directly follows from Theorem \ref{keythm}.
\end{proof}
\begin{notation}
Let $V=\sum_{1\leq i\leq k}a_i\lambda_i$ be an actual orientable representation (recall that $\lambda_1=2\alpha$). Recall also that elements $u_W$, $u_{W'}$ commute for irreducible (thus general) orientable $W$ and $W'$ (see Proposition \ref{prop uv commute}). We will write \[u_V:=u^{c_1}_{\lambda_1}u_{\lambda_2}^{c_2}\ldots u_{\lambda_k}^{c_k}\in H\A^G_{|V|-V}.\]
Note that $u_V\colon H\M^G_0\to H\M^G_{|V|-V}\cong \M(G_V)^G$ is the restriction $\res^G_{G_V}$.
\end{notation}
\begin{proposition}
\label{prop mult by u on 1-alpha}
Let $V=1-\alpha+|V'|-V'$ be a non-orientable representation with $V'=\sum_{1\leq i\leq k}a_i\lambda_i\neq 0$. Then:
\begin{enumerate}
\item $H\M^G_{V}=\prescript{}{\zeta_1}\M(G_{V'})$.
\item Multiplication $u_{V'}\colon H\M^G_{1-\alpha}\cong \ker(\tr_{G'}^G)\to H\M^G_{V}\cong\prescript{}{\zeta_1}\M(G_{V'})$ is the restriction map $\res^{G'}_{G_V}$.
\end{enumerate}
\end{proposition}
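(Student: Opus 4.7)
The plan is to prove (1) by identifying $\pi_V^G H\M$ with a top homology and reading off the top differential, and to deduce (2) by pinning down $u_{V'}$ via restriction to $G_{V'}$.

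For (1), I would first apply the shearing identification coming from the equality of virtual representations $V=(1+|V'|)-(\alpha+V')$, so that
\[
\pi_V^G H\M \;\cong\; H^G_{1+|V'|}(S^{\alpha+V'};\M).
\]
The actual representation $\alpha+V'$ has odd fixed dimension $y=1+2c_1$ (writing $V'=c_1\lambda_1+\sum_{i\ge 2}c_i\lambda_i$), so Theorem \ref{keythm} applies in case (2) when $c_1=0$ and in case (3) when $c_1\ge 1$. By Lemma \ref{CellularLemma} the top cell of $S^{\alpha+V'}$ has orbit type $G/G_{V'}$ in dimension $1+|V'|$, contributing the coefficient $\M(G_{V'})$. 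Unwinding the alternating pattern of differentials (as in the displayed example $C_*(S^{\alpha+2\lambda_k};\M)$ after Theorem \ref{keythm}, and the odd-$y$ half of Construction \ref{construction of psi}) shows that the top differential $\partial_{1+|V'|}$ acts on $\M(G_{V'})$ as $1+\overline{\gamma}$, where $\overline{\gamma}$ generates the Weyl action of $G/G_{V'}$. Its kernel is exactly $\prescript{}{\zeta_1}\M(G_{V'})$.

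For (2), I would consider the naturality square for restriction to $G_{V'}$:
\[
\xymatrix{
H\M^G_{1-\alpha} \ar[r]^{u_{V'}} \ar[d]_{\res^G_{G_{V'}}} & H\M^G_V \ar[d]^{\res^G_{G_{V'}}} \\
H\M^{G_{V'}}_{1-\alpha} \ar[r]^{u_{V'}} & H\M^{G_{V'}}_V.
}
\]
Because $V'\neq 0$ forces $G_{V'}\subseteq G'=\ker(\alpha)$, both $(1-\alpha)|_{G_{V'}}$ and $V|_{G_{V'}}$ are virtually zero, so each bottom term equals $\M(G_{V'})$, and since $\res^G_{G_{V'}}(u_{V'})$ is the unit by definition (see Remark \ref{remark action of uv}), the bottom arrow is the identity. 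The left vertical factors as the inclusion $\ker(\tr^G_{G'})\hookrightarrow \M(G')=H\M^{G'}_{1-\alpha}$ followed by $\res^{G'}_{G_{V'}}$, and by the argument in part (1) the right vertical is the inclusion $\prescript{}{\zeta_1}\M(G_{V'})\hookrightarrow \M(G_{V'})$. Since this inclusion is injective, the top arrow is forced to be $\res^{G'}_{G_{V'}}$ restricted to $\ker(\tr^G_{G'})$. That the image actually lands inside $\prescript{}{\zeta_1}\M(G_{V'})$ is immediate from the double-coset formula: for $x\in\ker(\tr^G_{G'})$ one has $(1+\gamma)x=\res^G_{G'}\tr^G_{G'}(x)=0$ in $\M(G')$, and therefore $(1+\gamma)\res^{G'}_{G_{V'}}(x)=0$ in $\M(G_{V'})$.

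The main obstacle is the chain-level identification of the two vertical restriction maps in the naturality square as honest inclusions. This requires carefully comparing the reduced $G$-cellular structure of $S^{\alpha+V'}$ with its restriction to $G_{V'}$, along the lines of the quasi-isomorphism $\psi$ of Construction \ref{construction of psi}, and tracking how the top-dimensional generator transforms under change of equivariance.
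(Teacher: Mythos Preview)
Your proposal is correct and follows essentially the same approach as the paper. Both arguments deduce (1) directly from Theorem~\ref{keythm}, and for (2) both restrict the $u_{V'}$-multiplication to $G_{V'}$ (where $u_{V'}$ becomes a unit), identify the resulting restriction on the target $H\M^G_V\to H\M^{G_{V'}}_V\cong \M(G_{V'})$ as the inclusion $\prescript{}{\zeta_1}\M(G_{V'})\hookrightarrow \M(G_{V'})$, and read off the top map from injectivity; the paper presents this via a space-level diagram with the top-cell collapse $S^{V'}\to G/{G_{V'}}_+\wedge S^{|V'|}$, while you phrase it as a Mackey naturality square, but the content is identical.
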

Note that in Point (2) image of $\res^{G'}_{G_V}$ always lies in the codomain when restricted to the domain, by the double-coset formula.
\begin{proof}
\leavevmode
\begin{enumerate}
\item Follows from Theorem \ref{keythm}.
\item Let $v\colon S^1\to S^\alpha\wedge H\M$ be a representative of an element in $H\M^G_{1-\alpha}$. Then consider the commutative diagram
\[
\begin{tikzcd}
S^{|V'|}\ar[r,"v\wedge\id"]& S^{\alpha-1}\wedge S^{|V'|}\wedge H\M\ar[r,"\id\wedge u_{V'}"]\ar[dr]&S^{\alpha-1}\wedge S^{V'}\wedge H\M\ar[d] \\
&&S^{\alpha-1}\wedge{G/G_V'}_+\wedge S^{|V'|}\wedge H\M.
\end{tikzcd}
\]

The diagonal map displays the restriction of the class in $H\M^G_{1-\alpha}$ to $H\M^{G_{V'}}_{1-\alpha}\cong H\M^{G_{V'}}_0=\M(G_{V'})$, and the vertical map is the inclusion of $\prescript{}{\zeta_1}\M(G_{V'})$ in $\M(G_{V'})$ when taking $\pi_{|V'|}^G(-)$.

The restriction map $\res(1-\alpha)^G_{G_V'}$ is induced by the restriction in the Mackey functor $\M$. This can be seen as follows. Let $v\colon S^1\to S^\alpha\wedge H\M$ be a representative of a class in $H\M^G_{1-\alpha}$. Then its restriction to $H\M^{G_{V'}}_{1-\alpha}$ is given by the composite
\[
G/{G_{V'}}_+\wedge S^{1-\alpha}\to S^{1-\alpha}\to H\M.
\]
Note that $G/{G_{V'}}_+\wedge S^{1-\alpha}\cong G/{G_{V'}}_+$ by the shearing isomorphism. Thus restriction of $v$ gives a class in $\M(G_{V'})$, as expected.

Therefore since the diagonal map is induced by $\res^G_{G_{V'}}$ and the vertical map is an inclusion, the multiplication by $u_V$ has to be the restriction.
\end{enumerate}
\end{proof}
\subsubsection{$H\M^G_V$ for $V$ with $|V|=-1$}
\label{subsubsec v tot deg -1}
Now we will compute the entries in the positive cone graded over $V$ with $|V|=-1$. We first observe that if $V$ has $|V|=-1$, then there exists a unique orientable $G$-representation $U$ with $U^G=0$ such that $V=|U|-U-1$ or $|U|-U-\alpha$ depending on the orientability of $V$.

\begin{proposition}
\label{prop HM of deg -1}
    Let $U=\sum_{i=1}^j a_i\lambda_i, a_i\geq 0$ and $a_j\neq 0$. Let $W=U-\lambda_j$ and $s$ be the biggest integer such that $\lambda_s$ appears in $W$ with non-zero coefficient. Then
    \begin{equation*}
        H\M_{|U|-1-U}^G=\begin{cases}
             \frac{_{\zeta_j}\M(G_U)}{(1-\gamma)\M(G_U)}&\,\text{if}\,a_j\geq 2,\\[7pt]
             \frac{\ker(\zeta_s\tr^{G_{W}}_{G_{U}})}{(1-\gamma)\M(G_U)}&\,\text{if}\,a_j=1\,\text{and}\,W\neq 0,\\[7pt]
             \frac{\ker(\tr^{G}_{G_{U}})}{(1-\gamma)\M(G_U)}&\,\text{if}\,a_j=1\,\text{and}\,W=0.
        \end{cases}
    \end{equation*}
    and
    \begin{equation*}
        H\M_{|U|-\alpha-U}^G=\left\{
        \begin{array}{ll}
             \frac{_{\overline{\zeta}_j}\M(G_U)}{(1+\gamma)\M(G_U)}&\,\text{if}\,a_j\geq 2,\\[7pt]
             \frac{\ker(\overline{\zeta}_s\tr^{G_{W}}_{G_{U}})}{(1+\gamma)\M(G_U)}&\,\text{if}\,a_j=1\,\text{and}\,W\neq0,\\[7pt]
             \frac{\ker(\overline{\zeta}_1\tr^{G'}_{G_{U}})}{(1+\gamma)\M(G_U)}&\,\text{if}\,a_j=1\,\text{and}\,W=0.
        \end{array}
        \right.
    \end{equation*}
\end{proposition}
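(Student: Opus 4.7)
The plan is to interpret each group as a second-from-top cellular homology of an appropriate representation sphere and then read off the two top differentials from Theorem~\ref{keythm}. Concretely, the standard shift gives
\[
H\M^G_{|U|-1-U}\cong H^G_{|U|-1}(S^U;\M) \quad\text{and}\quad H\M^G_{|U|-\alpha-U}\cong H^G_{|U|}(S^{U+\alpha};\M).
\]
By Lemma~\ref{CellularLemma}, both spheres have one cell in each dimension with the top two cells sharing stabilizer $G_{\lambda_j}=G_U$. Hence in each case the answer is $\ker(\partial_{\mathrm{top}-1})/\im(\partial_{\mathrm{top}})$ as a subquotient of $\M(G_U)$.

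For the first identity, $U$ is orientable, so $\partial_{|U|}=1-\gamma$, and the three subcases correspond to different positions of $|U|-1$ in the cellular filtration. When $a_j\geq 2$, the topmost three cells all carry stabilizer $G_U$ and the interior identity $\partial_{|U|-1}\partial_{|U|}=1-\gamma^{[G:G_U]}=\zeta_j(1-\gamma)$ from Theorem~\ref{keythm} gives $\partial_{|U|-1}=\zeta_j$. When $a_j=1$ and $W\neq 0$, the differential crosses into the top $\lambda_i$-block of $S^W$ and the boundary constraint $\partial_{|U|-1}\partial_{|U|}=(1-\gamma^{[G:G_W]})\tr^{G_W}_{G_U}=\zeta_i\tr^{G_W}_{G_U}(1-\gamma)$, combined with the fact that transfers commute with $1-\gamma$, forces $\partial_{|U|-1}=\zeta_i\tr^{G_W}_{G_U}$. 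When $W=0$, the sphere $S^{\lambda_j}$ has only three cells, and Theorem~\ref{keythm} directly specifies $\partial_1=\tr^G_{G_U}$ and $\partial_2=1-\gamma$. Taking $\ker/\im$ in each subcase produces the three stated formulas.

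The second identity follows the same template after two substitutions: $U+\alpha$ is non-orientable, so $\partial_{|U|+1}=1+\gamma$, and the identity $\overline{\zeta_j}(1+\gamma)=1-\gamma^{[G:G_U]}$ from the remark after Definition~\ref{def zeta elements} makes $\overline{\zeta_j}$ and $\overline{\zeta_i}$ play the roles of $\zeta_j$ and $\zeta_i$. The boundary case $U=\lambda_j$ corresponds to $V=\alpha+\lambda_j$ with $d_1=1$, which falls into case~(2) of Theorem~\ref{keythm}, where the theorem explicitly supplies $\partial_2=(1-\gamma)\tr^{G'}_{G_U}=\overline{\zeta_1}\tr^{G'}_{G_U}$ and $\partial_3=1+\gamma$. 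The only subtle point is the bookkeeping: tracking which subcase of Theorem~\ref{keythm} applies at $s=|U|-1$ (resp.~$|U|$), and uniquely inverting the two-step composition against the top differential using Weyl-equivariance of transfers.
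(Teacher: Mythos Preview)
Your argument is correct and is precisely the approach the paper takes: its proof reads in full ``Directly follows from Theorem~\ref{keythm},'' and you have spelled out how to read off the top two differentials of $C_*^G(S^U;\M)$ and $C_*^G(S^{U+\alpha};\M)$ from that theorem in each subcase. The one minor imprecision is that your handling of the boundary case $U=\lambda_j$ via case~(2) of Theorem~\ref{keythm} tacitly assumes $j\geq 2$; when $j=1$ one has $V=3\alpha$ with $d_1=3$, falling instead under case~(3), but the resulting differentials $\partial_2=1-\gamma=\overline{\zeta_1}$ and $\partial_3=1+\gamma$ still yield the stated formula.
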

\begin{proof}
    Directly follows from Theorem \ref{keythm}.
\end{proof}

In the next two subsections, we will use the theory described above to derive the positive cones of the most important Eilenberg-MacLane spectra $H\Z$ and $H\A$ over $C_{2^k}$. Note the whole $RO(G)$-graded homotopy of $H\underline{\mathbb{F}_2}$, in particular the positive cone, is determined by the second author \cite{Yan23HF2}.

\subsection{The constant Mackey functor \texorpdfstring{$\Z$}{}}
In this subsection we will describe the structure of the positive cone of $H\Z$. Recall the Mackey functor $\Z$ is given by $\Z(H)=\mathbb{Z}$ for $H\subset G$. Using transitivity, the restrictions and transfers are determined by $\res^H_K=1$ and $\tr^H_K=2$ when $[H:K]=2$.  Since $\Z$ is actually a Green functor, the positive cone of $H\Z$ has a ring structure. The reader can compare the following result with \cite{HHRc}.

\begin{theorem}
\label{thm pos cone HZ}
Let $G=C_{2^k},k\geq 1$. Then we have
\[
H\Z^G_{pos}=\frac{\mathbb{Z}\left[a_\alpha,\{a_{\lambda_i}|2\leq i\leq k\},\{u_{\lambda_i}|1\leq i\leq k\}\right]}{2a_\alpha,\{2^ia_{\lambda_i}|2\leq i\leq k\}, \{2^{i-j}a_{\lambda_i}u_{\lambda_j}-a_{\lambda_j}u_{\lambda_i}|1\leq j < i\leq k\}}.
\]
\end{theorem}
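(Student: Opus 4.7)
The plan is to produce a ring homomorphism $\Phi\colon R \to H\underline{\mathbb{Z}}^G_\star$ from the right-hand ring $R$, check that its image lies in the positive cone, and then verify it is an isomorphism onto $H\underline{\mathbb{Z}}^G_{pos}$ by matching both sides on each $RO(G)$-graded component.

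First I will verify the relations. By Proposition~\ref{prop mackey av}, $H\underline{\mathbb{Z}}^G_{-\lambda_i}\cong \underline{\mathbb{Z}}(G)/\tr^G_{G_{\lambda_i}}(\underline{\mathbb{Z}}(G_{\lambda_i}))\cong \mathbb{Z}/2^i$, since transfers in $\underline{\mathbb{Z}}$ are multiplication by the index; this gives $2^i a_{\lambda_i}=0$, and similarly $2a_\alpha=0$. The gold relation $2^{i-j}a_{\lambda_i}u_{\lambda_j}=a_{\lambda_j}u_{\lambda_i}$ (with the convention $a_{\lambda_1}=a_\alpha^2$) is Proposition~\ref{prop au relation} applied to $\underline{\mathbb{Z}}$, using $\tr^{G_{\lambda_j}}_{G_{\lambda_i}}(1)=[G_{\lambda_j}:G_{\lambda_i}]=2^{i-j}$. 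Since every generator has non-positive coefficient on each of $\alpha,\lambda_2,\ldots,\lambda_k$, the image of $\Phi$ sits inside $H\underline{\mathbb{Z}}^G_{pos}$.

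Next I compare both sides on each graded piece $V=x+y\alpha+\sum a_i\lambda_i$ of the positive cone. The Vanishing Lemma~\ref{lem vanishing} kills $H\underline{\mathbb{Z}}^G_V$ for $x<0$ or $|V|>0$, and a short $a_\lambda$-iso argument propagates the vanishing to $x$ odd (from the zero groups at $|V|\in\{-1,0\}$); the right-hand side is likewise empty in these ranges because each monomial has $x=2\sum f_j\geq 0$. At $|V|=0$ orientable, write $V=|U|-U$ for an actual orientable $U$ in the positive cone: Proposition~\ref{prop mackey uv}, together with the triviality of the Weyl actions on $\underline{\mathbb{Z}}$, gives $H\underline{\mathbb{Z}}^G_V\cong \mathbb{Z}$ generated by $u_U$, matching the unique pure-$u$ monomial of this degree. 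Non-orientable $|V|=0$ vanishes on both sides by Propositions~\ref{prop 1-alpha} and~\ref{prop mult by u on 1-alpha}, since $\ker(2\colon\mathbb{Z}\to\mathbb{Z})=0$ and $\prescript{}{\zeta_1}\mathbb{Z}=0$. At $|V|=-1$ non-orientable, Proposition~\ref{prop HM of deg -1} specialises to $\mathbb{Z}/2\{a_\alpha u_U\}$ in each subcase, because $\overline{\zeta_j}$ and $\overline{\zeta_i}\tr$ all act as $0$ on the trivial module $\mathbb{Z}$ while the denominators are $(1+\gamma)\mathbb{Z}=2\mathbb{Z}$; the orientable $|V|=-1$ case vanishes by the same proposition. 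For $|V|\leq -2$, Proposition~\ref{prop alambda iso} gives isomorphisms $a_{\lambda_k}\colon H\underline{\mathbb{Z}}^G_V\xrightarrow{\cong} H\underline{\mathbb{Z}}^G_{V-\lambda_k}$, reducing inductively to the base cases above.

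The hard part will be matching the orders of the cyclic groups at $|V|\leq -2$, where a single degree $V$ can receive multiple monomials in $R$: for instance $a_{\lambda_i}u_{2\alpha}$ and $a_\alpha^2 u_{\lambda_i}$ both lie in degree $2-2\alpha-\lambda_i$ and are related by the gold relation $a_\alpha^2 u_{\lambda_i}=2^{i-1}a_{\lambda_i}u_{2\alpha}$. I would prove a normal-form lemma: every monomial of degree $V$ reduces, modulo the listed relations, to a distinguished monomial $a_\alpha^{\epsilon}a_{\lambda_{i_0}}^{e}\prod_{j\leq i_0}u_{\lambda_j}^{f_j}$, where $i_0=i_0(V)$ is the smallest index appearing nontrivially in $V$; the order of this distinguished monomial in $R$ is exactly $2^{i_0}=[G:G_{\lambda_{i_0}}]$, which by Corollary~\ref{cor hmnV} and Proposition~\ref{prop mackey av} matches the order of $H\underline{\mathbb{Z}}^G_V$. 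Combined with the base-case computations, this closes the comparison and produces the claimed isomorphism $R\xrightarrow{\cong} H\underline{\mathbb{Z}}^G_{pos}$.
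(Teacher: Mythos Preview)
Your verification of the relations and your handling of the base cases $|V|\in\{0,-1\}$ are fine, and the overall strategy of matching both sides degree-by-degree is reasonable. However, the reduction step for $|V|\leq -2$ has a genuine gap, and the normal-form lemma as stated is incorrect.

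For the reduction: dividing by $a_{\lambda_k}$ sends $V$ to $V+\lambda_k$, which increases the $\lambda_k$-coefficient. If $V$ has $a_k=0$ (or more generally $|V^\sharp|\leq 0$ in the paper's notation $V=V^\sharp-a_k\lambda_k$), iterating this takes you \emph{out} of the positive cone before you reach $|V|\in\{0,-1\}$. The propositions you cite for the base cases (Propositions~\ref{prop mackey uv}, \ref{prop 1-alpha}, \ref{prop mult by u on 1-alpha}, \ref{prop HM of deg -1}) all require the degree to be of the form $|U|-U$, $|U|-U-1$, or $|U|-U-\alpha$ for an \emph{actual} representation $U$, so they do not apply once the $\lambda_k$-coefficient becomes positive. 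This is precisely the case the paper handles by induction on the group order: when $a_k=0$ one uses Corollary~\ref{reduction to quotient group} and $\underline{\mathbb{Z}}^{\sharp C_2}\cong\underline{\mathbb{Z}}_{G'}$ to reduce to the $C_{2^{k-1}}$-computation, which you have bypassed entirely.

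For the normal-form lemma: the claim that the order is $2^{i_0}$ with $i_0$ the \emph{smallest} index fails already for $V=2-2\alpha-\lambda_3$ in $C_8$. Here the smallest index is $1$, but $H\underline{\mathbb{Z}}^{C_8}_V\cong\mathbb{Z}/8$ (by Proposition~\ref{prop action of alambda} applied to $W=2-2\alpha$, where $\tr(W)^G_e=8$), generated by $a_{\lambda_3}u_{2\alpha}$ with $a_\alpha^2u_{\lambda_3}=4a_{\lambda_3}u_{2\alpha}$ via the gold relation. Reading $i_0$ as the \emph{largest} index instead fails for $V=-2\alpha-\lambda_3$, where the group is $\mathbb{Z}/2$ by Proposition~\ref{prop mackey av}, not $\mathbb{Z}/8$. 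The order genuinely depends on the interplay between the $a$- and $u$-exponents, not on a single index; the paper sidesteps this by establishing the additive splitting \eqref{splitting of HZ} inductively rather than through a monomial normal form.
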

\begin{proof}
The result is true for $k=1$ by \cite{Duggerthesis}, \cite{Gre18} and \cite{Zeng17}. It is true for $k=2$ by \cite{Yan22HZ}. Assume it is true for $k-1$. For $G=C_{2^{k}}$, the relations can be readily deduced from Proposition \ref{prop mackey av} and Proposition \ref{prop au relation}. We are left to show that we have the additive splitting
\begin{equation}
\label{splitting of HZ}
    \begin{aligned}
        H\Z_{pos}^G&=\mathbb{Z}[a_{\lambda_k}][a_{\alpha},\cdots,a_{\lambda_{k-1}},u_{2\alpha},\cdots,u_{\lambda_{k-1}}]/\{2^k a_{\lambda_k},\text{relations for}\,C_{2^{k-1}}\}\\
        &\oplus\mathbb{Z}\langle u_{\lambda_k}^i \rangle_{i\geq 1}[a_{\lambda_k}][\uta,\cdots,u_{\lambda_{k-1}}]\langle 1,\aal\rangle/\{2\aal,2^k a_{\lambda_k}\}
    \end{aligned}
\end{equation}
For the notations, see the introduction (Section \ref{subsec notations}). Let $V$ be in the positive cone. Then we get non-trivial classes only when $|V|\leq 0$, since a $n$-dimensional $G$-CW complex cannot have $(n+1)$-dimensional homology. By Proposition \ref{reduction to quotient group} and the induction assumption, we get the classes
\begin{equation}
\label{pos cone C2k-1}
\mathbb{Z}[a_{\alpha},\cdots,a_{\lambda_{k-1}},u_{2\alpha},\cdots,u_{\lambda_{k-1}}]/\{\text{relations for}\,C_{2^{k-1}}\}
\end{equation}
when $V$ does not contain any copies of $\lambda_k$. Write $V=V^{\sharp}+a_k\lambda_k$ where $V^{\sharp}$ does not contain $\lambda_k$. Note $a_k\geq 0$ since $V$ is in the positive cone. We use $H\Z^G_{{pos}^{\sharp}}$ to denote the subring of the first summand in \eqref{splitting of HZ} with $a_{\lambda_k}$ having $0$-th power. That is, ${pos}^{\sharp}$ is the direct sum of all representations in the positive cone without any copy of ${\lambda_k}$. 

If $|V^{\sharp}|\leq 0$, then $H\Z_{{pos}^{\sharp}}^G$ is known as in \eqref{pos cone C2k-1}, and Propositions \ref{prop action of alambda} and \ref{prop alambda iso} gives us the first direct summand of \eqref{splitting of HZ}, since multiplication by $a_{\lambda_k}$ will either be an isomorphism or a surjection. If $|V^{\sharp}|> 0$, let $c$ be the unique integer such that $|V-c\lambda_k|=0$ or $-1$ depending on whether $|V|$ is even or odd. Let $d=a_k-c$, then we have $V=V^{\sharp}-c\lambda_k-d\lambda_k$. Write $W=V^{\sharp}-c\lambda_k$. We will show that the groups $H\Z_{V}^G$ with $|V^{\sharp}|>0$ consist of precisely the second summand in \eqref{splitting of HZ}.

By Proposition \ref{prop mackey uv} and an easy generalization of Proposition \ref{prop uv commute}, we know the $u_{\lambda_i}$'s generate a subring $\mathbb{Z}[\{u_{\lambda_i}|1\leq i\leq k\}]$ which gives us the groups $H\Z_{W}^G=0$ with $|W|=0$, since $H\Z_{n-n\alpha}=0$ for odd $n$ by Proposition \ref{prop 1-alpha}. Multiplying by powers of $a_{\lambda_k}$ gives us all the classes in the gradings with $|V^{\sharp}|>0$ and $|W|=0$ by Propositions \ref{prop action of alambda} and \ref{prop alambda iso}.

By Proposition \ref{prop HM of deg -1}, for $U$ an actual $G$-representation, we have that $H\Z^G_{|U|-U-1}=0$ and $H\Z^G_{|U|-U-\alpha}=\mathbb{Z}/2$.

We look at the following exaxt sequence
\[\begin{tikzcd}
H\Z^{G'}_{|U|-U}\rar["\tr_{G'}^G"]&H\Z^{G}_{|U|-U}\rar["a_\alpha"]&
H\Z^{G}_{|U|-U-\alpha}\rar&H\Z^{G'}_{|U|-U-1}.
\end{tikzcd}
\]
Since $\tr^G_{G'}=2$, we see $\aal$ is the projection $\mathbb{Z}\to \mathbb{Z}/2$, and we obtain the classes $\mathbb{Z}/2[\{u_{\lambda_i}|1\leq i\leq k\}]\langle\aal \rangle$. Multiplying by powers of $a_{\lambda_k}$ gives us all the classes in gradings with $|V^{\sharp}|>0$ and $|W|=-1$ by Propositions \ref{prop action of alambda} and \ref{prop alambda iso}.

\end{proof}

\subsection{Burnside Mackey functor \texorpdfstring{\A}{}}
In this subsection we will describe the structure of the positive cone for $H\A$, where $\A$ is the Burnside Mackey functor (actually a Green functor). Recall that for a general finite group $G$, the Burnside Green functor is given by for $H\subset G$,
\[
\A(H)=\text{the Grothendieck ring on isomorphism classes of finite $H$-sets},
\]
where the addition is disjoint union and product is the Cartesian product. In particular, it has a free $\mathbb{Z}$-basis given by the isomorphism classes of $H/L,L\subset H$. We will use $[H/L]$ to denote the isomorphism class of $H/L$. For $K\subset H\subset G$, $\res^H_K$ is given by restricting the $H$-action to a $K$-action, and $\tr^H_K$ is given by $H\times_K(-)$.

Fix $G=C_{2^k}$ and consider $\A(C_{2^k})$. The $\mathbb{Z}$-basis will be denoted by $\omegai{k}_j$, where $\omegai{k}_j$ is the class corresponding to the isomorphism class $[C_{2^k}/{C_{2^{k-j}}}]$. In particular, $\omegai{k}_k$ corresponds to the free $G$-set and $\omegai{k}_0$ is the unity of the ring. Similarly we define $\omegai{k}_j$ for subgroups of $G$.

The restrictions are explicitly given by:
\begin{equation*}
    \res^{C_{2^i}}_{C_{2^{i-1}}}(\omegai{i}_j)=\left\{
    \begin{array}{ll}
    2\omegai{i-1}_{j-1}&\,\text{if}\,1\leq j\leq i,\\
    \omegai{i-1}_{0}&\,\text{if}\, j=0.
    \end{array}
    \right.
\end{equation*}
The transfers are given by
\[
\tr^{C_{2^i}}_{C_{2^{i-1}}}(\omegai{i-1}_j)=\omegai{i}_{j+1}
\]
for $0\leq j\leq i-1$.

By Proposition \ref{prop mackey uv} and the fact that Weyl actions in $\A$ are trivial, we have
\[
H\A_{2-\lambda_j}(H)=\left\{\begin{array}{ll}
     \A(C_{2^{k-j}})&\,\text{if}\,C_{2^{k-j}}\subset H,\\
     \A(H)&\,\text{otherwise.}
\end{array}
\right.
\]
It is the constant Mackey functor with value $\A(C_{2^{k-j}})$ from the $G/G$-level to the $G/{C_{2^{k-j}}}$-level. At the $G/{C_{2^{k-j}}}$-level and below, it is the Burnside Mackey functor for the group $C_{2^{k-j}}$.

At the $C_{2^{k-j}}$-level, we have the actual products $\omegai{k-j}_{i}\cdot u_{\lambda_j}$ for $0\leq i\leq k-j$ and they form a $\mathbb{Z}$-basis of $H\A_{2-\lambda_j}(G/C_{2^{k-j}})$. Since $\res^G_{C_{2^{k-j}}}=1$ in this degree, we denote the lifts of these classes to the $G/G$-level by $[\omegai{k-j}_{i} u_{\lambda_j}]$, since they are not actual products anymore.

For a general orientable $V=\Sigma^j_{i=1}a_i\lambda_i$ with $a_j\neq 0$, the group $H\A_{|V|-V}^G$ has a $\mathbb{Z}$-basis $[\omegai{k-j}_{i} u_V],0\leq i\leq k-j$.

\begin{theorem}
\label{thm pos cone HA}
Let $G=C_{2^k}$. We have
\[
H\A^{G}_{pos}=\A(G)\left[a_\alpha,
\{a_{\lambda_i}|2\leq i\leq k\},\{[\omegai{k-j}_{i}u_{\lambda_j}]|1\leq j\leq k, 0\leq i\leq k-j\}
\middle]\right/ S,
\]
where $S$ is the set of relations consisting of the following:
\begin{enumerate}
\item For all $1\leq j\leq k$ we have that $\omegai{k}_ja_\alpha=0$.
\item For all $2\leq i\leq j\leq k$ we have that $\omegai{k}_ja_{\lambda_i}=0$.
\item For all $1\leq j_1\leq j_2\leq k$ and $0\leq i_1\leq k-j_1,0\leq i_2\leq k-j_2$, we have that
\begin{equation*}
    [\omegai{k-j_1}_{i_1}u_{\lambda_{j_1}}]\cdot [\omegai{k-j_2}_{i_2}u_{\lambda_{j_2}}]=\left\{
    \begin{array}{ll}
2^{i_1}[\omegai{k-j_2}_{i_2}u_{\lambda_{j_2}}]u_{\lambda_{j_1}}&\,\text{if}\,i_1+j_1\leq j_2,\\
2^{j_2-j_1+i_1}[\omegai{k-j_2}_{i_2}u_{\lambda_{j_2}}]u_{\lambda_{j_1}}&\,\text{if}\,j_2\leq i_1+j_1\leq i_2+j_2,\\
2^{j_2-j_1+i_2}[\omegai{k-j_2}_{i_1+j_1-j_2}u_{\lambda_{j_2}}]u_{\lambda_{j_1}}&\,\text{if}\,i_1+j_1> i_2+j_2.
    \end{array}
    \right.
\end{equation*}
In particular, we have that for all $1\leq l\leq j\leq k$,
\[
u_{\lambda_l}\cdot [\omegai{k-j}_i u_{\lambda_j}]=[\omegai{k-j}_i u_{\lambda_l}u_{\lambda_j}].
\]
\item For all $1\leq i,j\leq k$, we have that
\begin{equation*}
    \omegai{k}_{i_1}\cdot [\omegai{k-j}_{i_2}u_{\lambda_i}]=\left\{
    \begin{array}{ll}
         2^{i_1+i_2}[\omegai{k-j}_{k-j-i_1}u_{\lambda_i}]&\,\text{if}\,i_1\leq j,\,\text{and}\,i_1\leq k-j-i_2,\\
         2^{k-j} [\omegai{k-j}_{i_2}u_{\lambda_i}]&\,\text{if}\,i_1\leq j,\,\text{and}\,i_1> k-j-i_2,\\
         2^{j+i_2} [\omegai{k-j}_{k-j-i_1}u_{\lambda_i}]&\,\text{if}\,i_1> j,\,\text{and}\,i_1\leq k-j-i_2,\\
         2^{k-i_1} [\omegai{k-j}_{i_2}u_{\lambda_i}]&\,\text{if}\,i_1> j,\,\text{and}\,i_1> k-j-i_2.
    \end{array}
    \right.
\end{equation*}
\item For all $1\leq j<i\leq k$ we have that $a_{\lambda_i}[\omegai{k-j}_{i-j}u_{\lambda_j}]=a_{\lambda_j}u_{\lambda_i}$.
\end{enumerate}
\end{theorem}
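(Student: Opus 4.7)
The plan is to adapt the strategy of Theorem \ref{thm pos cone HZ}, but proceed mostly by direct analysis rather than by induction on $k$ (the trick of inducting on $k$ does not transfer cleanly to $\A$ because $\A^{\sharp C_2}$ is not $\A$ for the smaller group). The essential observation is that in the positive cone one always has $|V|\leq 0$, so by Propositions \ref{prop alambda iso} and \ref{prop action of alambda} everything is determined, up to multiplication by powers of $a_{\lambda_k}$, by the spots with $|V|=0$ and $|V|=-1$.

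The first step is to verify all stated relations. Relations (1) and (2) follow from Frobenius reciprocity: since $\omega^{(k)}_j=\tr^G_{C_{2^{k-j}}}(1)$, one has $\omega^{(k)}_j\cdot a_V=\tr^G_{C_{2^{k-j}}}(\res^G_{C_{2^{k-j}}}(a_V))$, and for $V=\alpha$ or $V=\lambda_i$ with $i\leq j$ the restriction $V|_{C_{2^{k-j}}}$ is trivial so $a_{V|_{C_{2^{k-j}}}}=0$ by Definition \ref{def a classes}. Relations (3) and (4) I would verify by restricting both factors to the appropriate subgroup ($C_{2^{k-j_2}}$ for (3), $C_{2^{k-j}}$ for (4)), where the relevant $u_{\lambda_{\bullet}}$ becomes a unit via shearing (Proposition \ref{shearing}); the product then reduces to ordinary Burnside multiplication $\omega_a\cdot\omega_b=2^{\min(a,b)}\omega_{\max(a,b)}$ composed with the restriction formula $\res^{C_{2^n}}_{C_{2^{n-m}}}(\omega^{(n)}_a)=2^{\min(m,a)}\omega^{(n-m)}_{\max(0,a-m)}$; for (4), Frobenius further identifies multiplication by $\omega^{(k)}_{i_1}$ with $\tr^G_{C_{2^{k-i_1}}}\circ\res^G_{C_{2^{k-i_1}}}$. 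Relation (5) is an immediate specialization to $\R=\A$ of the generalized gold relation (Proposition \ref{prop au relation}), using $\tr^{G_{\lambda_j}}_{G_{\lambda_i}}(1)=\omega^{(k-j)}_{i-j}$ under the identifications of Proposition \ref{prop mackey uv}.

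For additive generation I would analyze the two essential spots separately. For $|V|=0$ with $V$ orientable, Proposition \ref{prop mackey uv} identifies $H\A^G_V(G/G)\cong\A(G_V)$ (trivial Weyl action for $\A$), whose $\mathbb{Z}$-basis $\omega^{(k-j)}_0,\ldots,\omega^{(k-j)}_{k-j}$ corresponds to the declared generators $[\omega^{(k-j)}_i u_V]$. For $|V|=0$ with $V$ non-orientable, Propositions \ref{prop 1-alpha} and \ref{prop mult by u on 1-alpha} present $H\A^G_V$ as $\zeta_1$-torsion in a suitable $\A(G_V)$, which vanishes since $\A(H)$ is torsion-free and $\zeta_1$ acts as $2$ when the Weyl action is trivial. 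For $|V|=-1$, Proposition \ref{prop HM of deg -1} yields $0$ in the orientable case (torsion-freeness) and $\A(G_U)/2$ in the non-orientable case; one then identifies the latter groups with the classes $a_\alpha\cdot[\omega^{(k-j)}_i u_V]$. Propositions \ref{prop alambda iso} and \ref{prop action of alambda} finally propagate these generators through the rest of the positive cone via multiplication by powers of $a_{\lambda_k}$.

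The hard part will be the case analysis in relations (3) and (4), where several sub-cases arise according to how the weights $i_1+j_1$ and $i_2+j_2$ compare against the relevant thresholds; obtaining the correct power of $2$ in each case requires carefully tracking factors coming separately from the restriction map and from the Burnside product. I would confirm completeness by a rank count: the $\mathbb{Z}$-rank (respectively $\mathbb{F}_2$-dimension) of $H\A^G_V$ predicted by the presentation after imposing the listed relations must match the value computed directly via Propositions \ref{prop mackey uv} and \ref{prop HM of deg -1}. Since relations (1)--(2) eliminate exactly the $\omega^{(k)}_j$-multiples that would otherwise over-generate in degrees with $|V|<0$, this matching leaves no room for hidden relations.
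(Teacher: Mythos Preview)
Your verification of relations (1)--(5) follows the paper exactly: Frobenius reciprocity for (1), (2); restriction to $C_{2^{k-j_2}}$ (resp.\ $C_{2^{k-j}}$) combined with Burnside multiplication for (3), (4); and the gold relation for (5). No issues there.

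The divergence is in the additive structure. You assert that induction on $k$ ``does not transfer cleanly to $\A$ because $\A^{\sharp C_2}$ is not $\A$ for the smaller group,'' but this is precisely the point the paper addresses: there is a splitting of $G'$-Mackey functors
\[
(\A_G)^{\sharp C_2}\cong \A_{G'}\oplus \Z^*_{G'},
\]
and hence of spectra $(H\A_G)^{C_2}\simeq H\A_{G'}\vee H\Z^*_{G'}$. The paper's proof \emph{is} inductive: the $\A_{G'}$-summand supplies the positive cone for the smaller group by the inductive hypothesis, while the $\Z^*_{G'}$-summand accounts for exactly the extra generators $[\omega^{(k-j)}_{k-j}u_U]$ coming from free orbits. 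This inductive decomposition is cleaner than a direct degree-by-degree computation and structurally explains where each generator originates.

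Your direct approach could in principle be made to work, but the propagation step is underspecified. Multiplication by powers of $a_{\lambda_k}$ alone does not reach gradings $V$ in the positive cone with $a_k=0$ and $|V|\leq -2$: for instance $V=-2\alpha$ cannot be written as $V_0-m\lambda_k$ with $V_0$ in the positive cone and $|V_0|\in\{0,-1\}$. Your rank-count check only applies at $|V|\in\{0,-1\}$ since Propositions \ref{prop mackey uv} and \ref{prop HM of deg -1} compute nothing else. To close the gap you would need either the general $a_{\lambda_s}$-isomorphism regions together with Proposition \ref{prop mackey av}, or to peel off the largest $\lambda_s$ appearing recursively---which is the induction you set out to avoid.
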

\begin{proof}
The case when $k=1$ is shown in \cite{Sikora22}. Assume the result is true for $k-1$. We use the same notations $V=V^{\sharp}-a_k\lambda_k$ and $W=V^{\sharp}-c\lambda_k$ when $|V^{\sharp}|>0$ as in Theorem \ref{thm pos cone HZ}. 

We first deduce the relations. Since $\A$ is a Green functor, we have the Frobenius relation: for any $a\in H\A_V(K)$ and $b\in H\A_W(H)$ 
\[
\tr^H_K(a)\cdot b=\tr^H_K(a\cdot \res^H_K(b)).
\]
The relations $(1)$ and $(2)$ are direct consequences of the Frobenius relation. Relation $(5)$ follows from the gold relation in Proposition \ref{prop au relation}. More precisely, we have $G_{\lambda_i}=C_{2^{k-i}}, G_{\lambda_j}=C_{2^{k-j}}$ and $\tr^{C_{2^{k-j}}}_{C_{2^{k-i}}}(1)=\omegai{k-j}_{i-j}$.

For $(3)$, we look at the following diagram coming from the pairing $H\A_{2-\lambda_{j_1}}\square H\A_{2-\lambda_{j_2}}\xrightarrow{\mu} H\A_{4-\lambda_{j_1}-\lambda_{j_2}}$:
\[\xymatrix{
\A(C_{2^{k-j_1}})\otimes \A(C_{2^{k-j_2}})\ar[r]^-{\mu}\ar[d]_{1\otimes 1}&\A(C_{2^{k-j_2}})\ar[d]_1\\
\A(C_{2^{k-j_1}})\otimes \A(C_{2^{k-j_2}})\ar[r]^-{\mu}\ar[d]_{{\res^{C_{2^{k-j_1}}}_{C_{2^{k-j_2}}}}\otimes 1}&\A(C_{2^{k-j_2}})\ar[d]_1\\
\A(C_{2^{k-j_2}})\otimes \A(C_{2^{k-j_2}})\ar[r]^-{\mu}&\A(C_{2^{k-j_2}}).
}
\]
Since the target has restrictions given by $1$ above $G/{C_{2^{k-j_2}}}$-level by Propostion \ref{prop mackey uv}, to understand the product on the $G/G$-level, it suffices to restrict it to the $G/{C_{2^{k-j_2}}}$-level. We notice
\[
{\res^{C_{2^{k-j_1}}}_{C_{2^{k-j_2}}}}(\omegai{k-j_1}_{i_1})=
\begin{cases}
     2^{i_1}&\text{if}\,i_1+j_1\leq j_2,\\
     2^{j_2-j_1}\omegai{k-j_2}_{i_1+j_1-j_2}&\text{if}\, i_1+j_1>j_2.
\end{cases}.
\]
Together with
\[
\omegai{k-j_2}_{i_1+j_1-j_2}\cdot \omegai{k-j_2}_{i_2}=
\begin{cases}
     2^{i_2}\omegai{k-j_2}_{i_1+j_1-j_2}&\text{if}\,i_1+j_1>i_2+j_2,\\
     2^{i_1}\omegai{k-j_2}_{i_2}&\text{if}\, i_1+j_1\leq i_2+j_2,
\end{cases}
\]
we deduce that
\begin{equation*}
    \res^G_{C_{2^{k-i_2}}}([\omegai{k-j_1}_{i_1}u_{\lambda_{j_1}}]\cdot [\omegai{k-j_2}_{i_2}u_{\lambda_{j_2}}])=
    \begin{cases}
2^{i_1}\omegai{k-j_2}_{i_2}u_{\lambda_{j_1}}u_{\lambda_{j_2}}&\text{if}\,i_1+j_1\leq j_2,\\
2^{j_2-j_1+i_1}\omegai{k-j_2}_{i_2}u_{\lambda_{j_1}}u_{\lambda_{j_2}}&\text{if}\,j_2\leq i_1+j_1\leq i_2+j_2,\\
2^{j_2-j_1+i_2}\omegai{k-j_2}_{i_1+j_1-j_2}u_{\lambda_{j_1}}u_{\lambda_{j_2}}&\text{if}\,i_1+j_1> i_2+j_2.
    \end{cases}
\end{equation*}
Since $\res^G_{C_{2^{k-i_2}}}=1$ in $H\A_{4-\lambda_1-\lambda_2}$, relation $(3)$ follows. Note that since $G_{\lambda_{j_2}}\subset G_{\lambda_{j_1}}$, $[\omegai{k-j_2}_{i_2}u_{\lambda_{j_1}}u_{\lambda_{j_2}}]=[\omegai{k-j_2}_{i_2}u_{\lambda_{j_2}}]u_{\lambda_{j_1}}$ by Proposition \ref{prop uv commute}. For relation $(4)$, we use the pairing $H\A_{0}\square H\A_{2-\lambda_i}\xrightarrow{\mu}H\A_{2-\lambda_i}$ and the argument is similar to the that of $(3)$.

Using the gold relation (see Proposition \ref{prop au relation}), to prove the structure of $H\A_{pos}^G$, we are left to show that we have the additive splitting
\begin{equation}
\label{splitting of HA}
H\A_{pos}^G=
\begin{aligned}
        &\frac{\A(G)[a_{\lambda_k}]\left[a_{\alpha},\{a_{\lambda_i}|2\leq i\leq k-1\},\left\{[\omegai{k-j}_{i}u_{\lambda_j}]\;\middle|\;1\leq j\leq k-1, 0\leq i\leq k-j\right\}\right]}{\widetilde{S}}\\
        &\oplus\frac{\A(G)\langle u_{\lambda_k}^i \rangle_{i\geq 1}[a_{\lambda_k}]\left[\left\{[\omegai{k-j}_{i}u_{\lambda_j}]\;\middle|\;1\leq j\leq k-1, 0\leq i\leq k-j\right\}\right]\langle 1,\aal\rangle}{2\aal}.
\end{aligned}
\end{equation}
The relations in $\widetilde{S}$ consists of the relations $(1),(2)$, as well as the relations $(3),(4),(5)$ where only $u_i$'s with $i\neq k$ are involved. The proof of the second summand in \eqref{splitting of HA}, which corresponds to the gradings with $|V^{\sharp}|>0$, is completely analogous to the case of $H\Z$. We only need to consider the first summand in \eqref{splitting of HA}, which correspond to $|V^{\sharp}|\leq 0$.

Note that
\begin{equation}
\label{splitting of A}
    (\A_G)^{\sharp C_2}\cong \A_{G'}\oplus \Z^{*}_{G'}
\end{equation}
where $\M_{H}$ is to indicate that $\M$ is the relevant $H$-equivariant Mackey functor. $\Z^*$ is the level-wise dual of $\Z$ generated by the free sets, ie, $\Z^*(C_{2^i})=\mathbb{Z}\langle\omegai{i}_{0}\rangle$ and $\res^H_K=2$ and $\tr^H_K=1$ when $[H:K]=2$. Under the splitting, $\omegai{k-j}_i\in \A(G)$ with $0\leq i<k-j$ on the left correspond to $\omegai{k-j-1}_i\in \A(G')$ on the right.

Thus we have the splitting of $G'$-equivariant spectra
\begin{equation*}
    (H\A_G)^{C_2}\simeq H\A_{G'}\vee H\Z^{*}_{G'}.
\end{equation*}
We consider the subring $H\A^G_{{pos}^{\sharp}}$ of the first summand in \eqref{splitting of HA}, where ${pos}^{\sharp}$ are all representations of the positive cone without any copy of $a_{\lambda_k}$. It corresponds to the classes with $a_{\lambda_k}$ having $0$-th power and additively decomposes as
\begin{equation}
\label{splitting of pos cone HA}
H\A^G_{{pos}^{\sharp}}\cong(H\A_{G'})_{pos}^{G'}\oplus (H\Z^*_{G'})_{pos}^{G'}    
\end{equation}
via Corollary \ref{reduction to quotient group},
with
\[
(H\A_{G'})_{pos}^{G'}=\frac{\A(G')\left[a_\alpha,
\{a_{\lambda_i}|2\leq i\leq k-1\},\{[\omegai{k-1-j}_{i}u_{\lambda_j}]|1\leq j\leq k-1, 0\leq i\leq k-j\}
\right]}{S'}
\]
by induction. Here $S'$ is the set of relations for the group $G'$. We notice that, via the identification \eqref{splitting of pos cone HA}, the difference between this ring and $H\A^G_{pos^{\sharp}}$ consists of the groups
\[
\mathbb{Z}\left\langle\{[\omegai{k-j}_{k-j}u_{U}]|U=\Sigma_{i=1}^{j}a_i\lambda_j \text{ with } a_j\neq 0, 1\leq j\leq k-1\}
\right\rangle
\]
since $\omegai{k}_k$ kills any Euler class by the Frobenius relation. These groups comes from $(H\Z^*_{G'})_{pos}^{G'}$ via Proposition \ref{prop mackey uv}. Taking into account of the particular case of relation $(3)$, we get a surjective ring map
\[
\A(G)\left[a_\alpha,
\{a_{\lambda_i}|2\leq i\leq k\},\{[\omegai{k-j}_{i}u_{\lambda_j}]|1\leq j\leq k, 0\leq i\leq k-j\}
\right]\twoheadrightarrow H\A^{G}_{pos},
\]
and the relation set $S$ is proved above.

\begin{remark}
    As mentioned in Section \ref{sec rog grading}, this result essentially contains the odd primary result since we are working locally. Let $p$ be an odd prime, and $G=C_{p^n}$. Recall the convention that $\lambda_1=2\alpha$, $p$-locally, the positive cone in this case would be
    \[
    H\A^{G}_{pos}=\A(G)\left[
    \{a_{\lambda_i}|1\leq i\leq k\},\{[\omegai{k-j}_{i}u_{\lambda_j}]|1\leq j\leq k, 0\leq i\leq k-j\}
    \middle]\right/ S,
    \]
    In the definition of $\omegai{k}_j$ and in the relations $(1)$-$(5)$, $2$ should be changed to $p$.
\end{remark}

\end{proof}

\section{The negative cone of \texorpdfstring{$H\M$}{} for arbitrary \texorpdfstring{$\M$}{}}
In this section we determine the structure of the negative cone $H\M^G_{neg}$ of $H\M^G_{\star}$, which is the subgroup indexed by $V\in RO(G)$ such that $a_i\geq 0$ for $1\leq i\leq k$. As an example, we completely compute the negative cone of $H\Z$. 

\subsection{Structure of the negative cone for arbitrary $\M$}
The homotopy groups with $|V|\leq -1$ in the negative cone are $0$ since $n$-dimensional CW-complex does not have $n+1$-dimentional cohomology. As a consequence, we only need to care about the $V$-th homotopy groups where $|V|=0,1$.
\subsubsection{\texorpdfstring{$H\M_V^G$}{} for orientable \texorpdfstring{$V$}{} with \texorpdfstring{$|V|=0$}{}}
\begin{proposition}
\label{prop u inverse}

    Let $V$ be an orientable (not necessarily irreducible) $G$-representation. Then $H\M^G_{V-|V|}\cong \M(G_V)/G$. For the Mackey functor structure, we have
\[
H\M_{V-|V|}(H)=
\begin{cases}
\M(G_V)/H&\textrm{if }G_V\subset H,\\
\M(H)&\textrm{otherwise.}
\end{cases}
\]
For $K\subset H\subset G_V$, the maps $\res^H_K,\tr^H_K$ are induced from that of $\M$. If $G_V\subset K\subset H$, then $\tr^H_K$ is induced by the canonical projection, and $\res^H_K$ is given by $\res^{C_{2^i}}_{C_{2^{i-1}}}(x)=(1+\gamma^{2^{k-i}})\cdot x$ and transitivity. Moreover, the map $u_V:\M(G_V)/G\cong H\M^G_{V-|V|}\to H\M_0^G=\M(G)$ is induced by $\tr^G_{G_V}$.
\end{proposition}
\begin{proof}
    Dual to the proof of Proposition \ref{prop mackey uv}.
\end{proof}

\subsubsection{\texorpdfstring{$H\M_V^G$}{} for non-orientable $V$ with $|V|=0$}
\begin{proposition}
\leavevmode
\begin{enumerate}
    \item The Mackey functor structure of $H\M^G_{\alpha-1}$ is given by
    \[
H\M_{\alpha-1}(H)=
\begin{cases}
\M(G')/{\im(\res^G_{G'})}&\textrm{if }H=G,\\[5pt]
\widetilde{\M(H)}&\textrm{otherwise.}
\end{cases}
\]
The map $\tr^G_{G'}$ is the canonical projection, and $\res^G_{G'}(x)=(1-\gamma)\cdot x$. Transfers and restrictions on other levels are induced from that of $\M$.
\item The Mackey functor structure of $H\M_{n\alpha-n}^G$ for odd $n>1$ is given by
\[
H\M_{n\alpha-n}(H)=
\begin{cases}
\M(G')/{\zeta_1\cdot \M(G')}&\textrm{if }H=G,\\[5pt]
\widetilde{\M(H)}&\textrm{otherwise.}
\end{cases}
\]
The map $\tr^G_{G'}$ is the canonical projection, and $\res^G_{G'}(x)=(1-\gamma)\cdot x$. Transfers and restrictions on other levels are induced from that of $\M$.
\end{enumerate}
\end{proposition}
\begin{proof}
    We firstly note that since $\res^G_{G'}\alpha=1$, we have $H\M_{n\alpha-n}(H)\cong \M(H)$ for $H\subset G'$. Now for each $H\subset G'$, the Weyl action on $H\M^H_n(S^{n\alpha})$ comes from the action on $S^{n\alpha}$ (sign action for $n$ odd) as well as the action on $\M(H)$. We have that there is an isomorphism $H\M_{n\alpha-n}\downarrow^G_{G'}\cong {{\tilde{\ZZ}}}\otimes (H\M_0\downarrow^G_{G'})\cong {{\tilde{\ZZ}}}\otimes (\M\downarrow^G_{G'})$ (levelwise tensoring).
    
    For the Mackey functor structure between the levels $G$ and $G'$, our proof works for both $n=1$ and odd $n>1$, so we will assume $n=1$. Replacing the top differential by $\zeta_1$ in the cellular cochain complex from Theorem \ref{keythm} provides the proof for odd $n>1$. The sphere $S^{\alpha}$ fits into the cofiber sequence
    \[
    {G/G'}_+\xrightarrow{} {G/G}_+\to S^{\alpha}.
    \]
    Its cellular cochain complex with coefficient in $\M$ is
    \[
    0\to \M(G)\xrightarrow{\res^G_{G'}}\M(G')
    \]
    with differential induced from the folding map $\nabla: G/G'\to G/G$. To deduce the Mackey functor structure, we have to apply $G/G'\times -$ to this map and consider the following commutative diagram, where the vertical arrows induce the restrictions and transfers in the Mackey functor structures:
    \[
\xymatrix{
G/G\times G/G&G/G\times G/G'\ar[l]_{1\times \nabla} &\M(G)\ar[d]^{\res^G_{G'}}\ar[r]^{\res^G_{G'}}&\M(G')\ar[d]^{\Delta}\\
G/G'\times G/G\ar[u]_{\nabla\times 1}&G/G'\times G/G'\ar[l]_{1\times \nabla}\ar[u]_{\nabla\times 1},&\M(G')\ar[r]^-{(1,\gamma)}&\M(G')\oplus 
M(G').
}
\]
Here we made the identification
\[
G/G'\times G/G'\cong G/G'_{(e,e)}\coprod G/G'_{(\gamma, 1)},
\]
and the vertical maps correspond to the restriction $C^*_G(S^{\alpha};\M)\to C^*_{G'}(i^*_{G'}\underline{S^{\alpha}};\M)$. In the right square, the map $\Delta$ is the diagonal, and $(1,\gamma)(x)=(x,\gamma x)$. For any $x\in \M(G')$ from the top right corner,
\[
\Delta(x)=(x,x)=(x,0)+(0,x)=(0,-\gamma x)+(0,x)=(0,(1-\gamma)x)
\]
modulo the image of $(1,\gamma)$. From the construction of $\psi_1$ in the proof of Theorem \ref{keythm}, we deduce that in $H\M_{\alpha-1}$, we have $\res^G_{G'}(x)=(1-\gamma)\cdot x$. Note that the $G$-module structure on $H\M_{\alpha-1}^{G'}$ is actually $\widetilde{\ZZ}\otimes\M(G')$. By the double-coset formula, we deduce $\tr^G_{G'}=1$.
\end{proof}

\begin{proposition}
    We have that
    \[
    H\M^G_{\alpha-1}=\coker(\res^G_{G'})
    \]
    and
    \[
    H\M_{n\alpha-n}^G=\M(G')/{\zeta_1\cdot \M(G')}
    \]
    for odd $n>1$.
\end{proposition}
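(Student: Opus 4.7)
The plan is to compute the top cohomology of the cellular cochain complex $C^*_G(S^{n\alpha};\M)$ using Theorem \ref{keythm}, in direct analogy with how Proposition \ref{prop 1-alpha} was obtained from the chain complex. Since
\[
H\M^G_{n\alpha - n} = \pi^G_{n\alpha - n} H\M \cong H^n_G(S^{n\alpha};\M),
\]
we only need the cokernel of the final cochain differential $\partial^{n-1}$.

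For $V = n\alpha$ one has $y = n$, all $a_i = 0$ and $G_\alpha = G'$, so the cellular filtration provided by Lemma \ref{CellularLemma} consists of a single $0$-cell $S^0$ together with one $G/G'$-cell in each dimension $1,2,\ldots,n$. Applying the cochain part of Theorem \ref{keythm} (case (2) when $n=1$, case (3) when $n\geq 2$) yields the complex
\begin{equation*}
\M(G)\xrightarrow{\res^G_{G'}}\M(G')\xrightarrow{1-\gamma}\M(G')\xrightarrow{1+\gamma}\M(G')\xrightarrow{1-\gamma}\cdots\to\M(G'),
\end{equation*}
with differentials after $\partial^0$ alternating between $1-\gamma$ and $1+\gamma$ starting from $\partial^1 = 1-\gamma$, as forced by the composition rule $\partial^{s+1}\partial^s = 1-\gamma^2 = (1-\gamma)(1+\gamma) = (1+\gamma)(1-\gamma)$.

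For $n=1$ the complex reduces to $\M(G)\xrightarrow{\res^G_{G'}}\M(G')$, giving $H^1 = \coker(\res^G_{G'})$, which is the first claim. For odd $n > 1$ the index $n-1$ is even, so $\partial^{n-1} = 1+\gamma = \zeta_1$, and thus $H^n = \M(G')/\zeta_1\cdot\M(G')$, which is the second claim. There is no real obstacle here beyond a parity check: the hypothesis that $n$ is odd is exactly what places $\zeta_1 = 1+\gamma$ at the terminal spot rather than $1-\gamma$ — in the latter (even) case the top cohomology would instead be the twisted quotient complementary to the twisted-torsion group $\prescript{}{\zeta_1}\M(G')$ appearing in Proposition \ref{prop 1-alpha}.
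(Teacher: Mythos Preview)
Your proof is correct and follows exactly the route the paper intends: the paper's proof is simply ``Directly follows from Theorem \ref{keythm},'' and you have written out precisely those details, reading off the top cohomology of the cochain complex $C^*_G(S^{n\alpha};\M)$ and checking the parity to identify $\partial^{n-1}=\zeta_1$ when $n$ is odd.
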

\begin{proof}
    Directly follows from Theorem \ref{keythm}.
\end{proof}

\begin{proposition}
\label{prop total dim 0 coho nonorient}
Let $V=\alpha-1+V'-|V'|$ be a non-orientable representation with $V'=\sum_{1\leq i\leq k}c_i\lambda_i\neq 0$, then:
\begin{enumerate}
\item $H\M^G_{V}=\frac{\M(G_V)}{\zeta_1\M(G_V)}$.
\item Multiplication $u_{V'}\colon H\M^G_V\cong \frac{\M(G_V)}{\zeta_1\M(G_V)}\to H\M^G_{\alpha-1}\cong\coker(\res^G_{G'})$ is the transfer map $\tr^{G'}_{G_V}$. 
\end{enumerate}
\end{proposition}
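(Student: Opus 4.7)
Set $W = \alpha + V'$ so that $V = W - |W|$ and $H\M^G_V \cong H^{|W|}_G(S^W; \M)$; note that $G_V = G_{V'}$ by Definition \ref{def representations}. The plan is to extract both statements from the cofiber sequence
\[
G/G'_+ \wedge S^{V'} \to S^{V'} \to S^W
\]
obtained by smashing the basic cofiber sequence $G/G'_+ \to S^0 \xrightarrow{a_\alpha} S^\alpha$ with $S^{V'}$.

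For (1), I will take the associated long exact sequence in $H^*_G(-; \M)$. Since both $S^{V'}$ and $G/G'_+ \wedge S^{V'}$ have cellular dimension $|V'|$ while $|W| = |V'|+1$, the contributions in cohomological degree $|W|$ from the first two terms vanish, and the connecting map produces an exact sequence
\[
H^{|V'|}_G(S^{V'}; \M) \xrightarrow{f} H^{|V'|}_{G'}(S^{V'}; \M\downarrow^G_{G'}) \twoheadrightarrow H^{|W|}_G(S^W; \M) \to 0,
\]
where the adjunction $[G/G'_+ \wedge -, H\M]^G = [-, H\M]^{G'}$ was used to identify the middle group. By Proposition \ref{prop u inverse} applied to $G$ and $G'$, the two outer groups are $\M(G_V)/(1-\gamma)$ and $\M(G_V)/(1-\gamma^2)$ respectively, and $f$ is the Mackey-functor restriction $\res^G_{G'}$ of $H\M_{V'-|V'|}$. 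I will identify $f$ with multiplication by $1+\gamma$ via the double-coset formula $\tr^G_{G'}\res^G_{G'} = 1+\gamma$, together with the fact that the transfer in this fixed-quotient Mackey functor is manifestly the natural surjection $\M(G_V)/(1-\gamma^2) \twoheadrightarrow \M(G_V)/(1-\gamma)$. Using $(1-\gamma^2) = (1-\gamma)(1+\gamma) \subset (1+\gamma)$, the cokernel simplifies to $\M(G_V)/(1+\gamma)\M(G_V) = \M(G_V)/\zeta_1\M(G_V)$.

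For (2), I will use naturality of $u_{V'}$-multiplication with respect to the cofiber sequence $G/G'_+ \to S^0 \xrightarrow{a_\alpha} S^\alpha$. Running the same LES argument with $V'$ replaced by $0$ yields the natural surjection $\partial'\colon \M(G') \twoheadrightarrow \coker(\res^G_{G'}) = H\M^G_{\alpha-1}$ from Proposition \ref{prop 1-alpha}, and naturality provides a commutative square
\[
\begin{array}{ccc}
H\M^{G'}_{V'-|V'|} & \xrightarrow{u_{V'}} & H\M^{G'}_0 \\
\partial \downarrow & & \downarrow \partial' \\
H\M^G_V & \xrightarrow{u_{V'}} & H\M^G_{\alpha-1}
\end{array}
\]
with surjective verticals. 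By Proposition \ref{prop u inverse} applied to $G'$, the top arrow is $\tr^{G'}_{G_V}$, so for a lift $y \in \M(G_V)$ of $[y] \in \M(G_V)/\zeta_1 = H\M^G_V$ one computes $u_{V'}([y]) = \partial'(\tr^{G'}_{G_V}(y)) = [\tr^{G'}_{G_V}(y)]$, which is the claimed induced transfer. Well-definedness of the map on the quotient is checked by $\tr^{G'}_{G_V}((1+\gamma)x) = (1+\bar\gamma)\tr^{G'}_{G_V}(x) = \res^G_{G'}(\tr^G_{G'}\tr^{G'}_{G_V}(x))$, which lies in $\res^G_{G'}(\M(G))$.

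The main obstacle will be pinning down $\res^G_{G'}$ on $H\M_{V'-|V'|}$ as multiplication by $1+\gamma$; once this algebraic identification is in place, both parts reduce to diagram chasing in the LESs above. A secondary delicacy is verifying that the naturality square in (2) commutes on the nose, which follows because $u_{V'}$-multiplication is induced by a single fixed map of spectra and therefore commutes with the connecting maps of any smashed-in cofiber sequence.
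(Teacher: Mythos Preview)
Your approach is correct in substance but differs from the paper's. The paper simply says ``Dual to Proposition \ref{prop mult by u on 1-alpha}'': for (1) one reads the top cohomology directly off the cochain complex of $S^{\alpha+V'}$ furnished by Theorem \ref{keythm}, and for (2) one dualizes the diagram argument there (using the inclusion of the top cell and the shearing isomorphism). You instead extract everything from the cofiber sequence $G/G'_+\to S^0\to S^\alpha$ smashed with $S^{V'}$, together with the known Mackey structure of $H\M_{V'-|V'|}$ from Proposition \ref{prop u inverse}. This is a pleasant, more conceptual alternative: it avoids re-examining the cochain complex and makes the role of $u_{V'}$ transparent via naturality, at the cost of needing the fixed-quotient description of $H\M_{V'-|V'|}$ as input.

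One small correction: the double-coset formula reads $\res^G_{G'}\circ\tr^G_{G'}=1+\gamma$, not $\tr^G_{G'}\circ\res^G_{G'}=1+\gamma$. Your argument actually needs the correct direction: since $\tr^G_{G'}$ is the natural surjection $\M(G_V)/(1-\gamma^2)\twoheadrightarrow\M(G_V)/(1-\gamma)$, the identity $\res^G_{G'}\circ\tr^G_{G'}=1+\gamma$ forces $\res^G_{G'}$ to be multiplication by $1+\gamma$ (well-defined because $(1+\gamma)(1-\gamma)\subset(1-\gamma^2)$). The reversed composite only gives $\tr\circ\res=2$ on $\M(G_V)/(1-\gamma)$, which does not pin down $\res$. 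With this fix your computation of the cokernel and the naturality square for (2) go through as written.
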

\begin{proof}
    Dual to Proposition \ref{prop mult by u on 1-alpha}.
\end{proof}

\subsubsection{$H\M^G_V$ for $V$ with $|V|=1$}

\begin{proposition}
\label{prop cohomology of total deg 1}
    Let $U=\sum_{i=1}^j a_i\lambda_i, a_i\geq 0$ and $a_j\neq 0$. Let $W=U-\lambda_j$ and $s$ be the biggest integer such that $\lambda_s$ appears in $W$ with non-zero coefficient. Then
    \begin{equation*}
        H\M_{U-|U|+1}^G=
        \begin{cases}
             \frac{\M(G_U)^G}{\prescript{}{\zeta_j}\M(G_U)}&\text{if}\,a_j\geq 2,\\[7pt]
             \frac{\M(G_U)^G}{\im({\zeta_s}\res^{G_W}_{G_U})}&\text{if}\,a_j=1\,\text{and}\,W\neq 0\\[7pt]
             \frac{\M(G_U)^G}{\im(\res^{G}_{G_U})}&\text{if}\,a_j=1\,\text{and}\,W=0.
        \end{cases}
    \end{equation*}
    and
    \begin{equation*}
        H\M_{U-|U|+\alpha}^G=
        \begin{cases}
             \frac{_{\zeta_1}\M(G_U)}{_{\overline{\zeta}_j}\M(G_U)}&\text{if}\,a_j\geq 2,\\[7pt]
             \frac{_{\zeta_1}\M(G_U)}{\im(\overline{\zeta}_s\res^{G_W}_{G_U})}&\text{if}\,a_j=1\,\text{and}\,W\neq 0,\\[7pt]
             \frac{_{\zeta_1}\M(G_U)}{\im(\overline{\zeta}_1\res^{G'}_{G_U})}&\text{if}\,a_j=1\,\text{and}\,W= 0.
        \end{cases}
    \end{equation*}
\end{proposition}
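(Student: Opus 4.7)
The plan is to compute both cohomology groups by direct inspection of the cellular cochain complex given by Theorem \ref{keythm}, exactly in parallel to how Proposition \ref{prop HM of deg -1} follows from the cellular chain complex. Spanier-Whitehead duality for representation spheres gives $\pi^G_{U-|U|+1}H\M \cong H^{|U|-1}_G(S^U;\M)$ and $\pi^G_{U-|U|+\alpha}H\M \cong H^{|U|}_G(S^{U+\alpha};\M)$, so in each case the group we want sits one position below the top of a cochain complex whose top cell has stabilizer $G_U$.

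For the first formula I would apply Theorem \ref{keythm} to $C^{\ast}_G(S^U;\M)$. Because $U$ is orientable with no trivial summand, we are in case (1) and $|U|$ is even, so the top differential $\partial^{|U|-1}$ is multiplication by $1-\gamma$, whose kernel is $\M(G_U)^G$. The preceding differential $\partial^{|U|-2}$ then splits into the three sub-cases of the statement: when $a_j\geq 2$, the cell $C^{|U|-2}$ is also $\M(G_U)$ and $\partial^{|U|-2}$ is the $\zeta_j$-factor from the alternating $1-\gamma, \zeta_j$ pattern inside the $G_U$-stabilized range; when $a_j=1$ and $W=U-\lambda_j\neq 0$, the cell $C^{|U|-2}$ is $\M(G_W)$ and Theorem \ref{keythm} supplies the transition map $\zeta_i\res^{G_W}_{G_U}$, where $i$ indexes the top summand of $W$; when $W=0$, the non-trivial part of the cochain degenerates to $\M(G)\xrightarrow{\res^G_{G_U}}\M(G_U)\xrightarrow{1-\gamma}\M(G_U)$, giving $\im(\res^G_{G_U})$ directly as the image.

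For the second formula I would run the same analysis on $C^{\ast}_G(S^{U+\alpha};\M)$. Now $U+\alpha$ has $d_1=1$, so we apply case (2) of Theorem \ref{keythm}: the top differential becomes $1+\gamma$ with kernel $\prescript{}{\zeta_1}\M(G_U)$, and each alternating $\zeta_{\bullet}$ is replaced by its signed counterpart $\overline{\zeta_{\bullet}}$ because of the parity twist introduced by the $\alpha$-cell in dimension $1$. The three sub-cases unfold exactly as before and yield the denominators built from $\overline{\zeta_j}$, $\overline{\zeta_i}\res^{G_W}_{G_U}$, and $\overline{\zeta_1}\res^{G'}_{G_U}$ respectively. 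The main bookkeeping concern is verifying that the top differential really is $1\mp\gamma$ rather than one of the $\zeta$-type maps; this is forced by the parity of $|U|$ together with the initial differentials prescribed by cases (1) and (2) of Theorem \ref{keythm}.
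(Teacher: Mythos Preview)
Your approach is correct and is exactly what the paper does: its entire proof is the single line ``Directly follows from Theorem \ref{keythm}.'' One minor slip worth noting: if $a_1>0$ then $U$ contains $\lambda_1=2\alpha$, so $d_1=2a_1\geq 2$ (resp.\ $d_1=2a_1+1$ for $U+\alpha$) and you are in case (3) of Theorem \ref{keythm} rather than case (1) (resp.\ case (2)); this does not affect your argument, since the three cases differ only at the bottom of the cochain complex and you are reading off the top two differentials.
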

\begin{proof}
    Directly follows from Theorem \ref{keythm}.
\end{proof}

\subsection{The constant Mackey functor \texorpdfstring{$\Z$}{}}
In the following theorem we compute the negative cone of $H\Z$. The notations comes from the previous computations via Tate squares in \cite{Zeng17}, \cite{Yan22HZ} and \cite{Yan23HF2}. For the methods to deduce the entire Mackey functor structure, we refer the reader to \cite[Section 6]{Yan22HZ}. Briefly speaking, by induction, we are only concerned with $\tr^G_{G'}$ and $\res^G_{G'}$, which can be deduced from $\aal$ multiplications by a trick of Hill-Hopkins-Ravenel in \cite[Lemma 4.2]{HHRb}.

\begin{theorem}
\label{thm neg cone HZ}
Let $G=C_{2^k},k\geq 1$, then $H\Z_{neg}^G$ can be written as the direct sum of the following $3k-1+\frac{k(k-1)}{2}$ summands. (For simplicity, all indices $i,j$ have range $\geq 1$.)
\begin{enumerate}
    \item The torsion-free part ($k$ summands)
    \begin{equation*}
        \begin{aligned}
            &\ZZ\langle 2\uta^{-i}\rangle\\
            \oplus\,&\ZZ[\uta^{-1}]\langle4u_{\lambda_2}^{-i}\rangle\\
            \oplus\,&\ZZ[u_{\lambda_2}^{-1},\uta^{-1}]\langle8u_{\lambda_3}^{-i}\rangle\\
            \oplus\,&\cdots\\
            \oplus\,&\ZZ[u_{\lambda_{k-1}}^{-1},\cdots,\uta^{-1}]\langle 2^k u_{\lambda_k}^{-i}\rangle.
        \end{aligned}
    \end{equation*}

    \item The $2$-torsion part (involving $\aal$, $k$ summands)
    \begin{equation*}
        \begin{aligned}
            &\ZZ/2[\uta^{-1}][\aal^{-1},a_{\lambda_2}^{-1},\cdots,\alk^{-1}]\langle \Si\aal^{-1}\uta^{-1}\rangle\\
            \oplus\,&\ZZ/2[u_{\lambda_2}^{-1}][a_{\lambda_3}^{-1},\cdots,\alk^{-1}]\langle\Si{ a_{\lambda_2}^{-i}\uta^{-j}}\rangle\langle{\aal}\rangle\\
            \oplus\,&\ZZ/2[u_{\lambda_2}^{-1},u_{\lambda_3}^{-1}][a_{\lambda_4}^{-1},\cdots,\alk^{-1}]\langle\Si{ a_{\lambda_3}^{-i}\uta^{-j}}\rangle\langle{\aal}\rangle\\
            \oplus\,&\cdots\\
            \oplus\,&\ZZ/2[u_{\lambda_2}^{-1},\cdots,u_{\lambda_k}^{-1}]\langle\Si{ a_{\lambda_k}^{-i}\uta^{-j}}\rangle\langle{\aal}\rangle
        \end{aligned}
    \end{equation*}
    and (not involving $\aal$, $k-1$ summands)
    \begin{equation*}
        \begin{aligned}
            &\ZZ/2[a_{\lambda_3}^{-1},\cdots,\alk^{-1}]\langle\Si a_{\lambda_2}^{-i}\uta^{-j}\rangle\\
            \oplus\,&\ZZ/2[a_{\lambda_4}^{-1},\cdots,\alk^{-1}]\langle\Si a_{\lambda_3}^{-i}\uta^{-j}\rangle\\
            \oplus\,&\cdots\\
            \oplus\,&\ZZ/2\langle\Si \alk^{-i}\uta^{-j}\rangle.
        \end{aligned}
    \end{equation*}
    \item For a fixed $s$ where $2\leq s\leq k$, the $2^s$-torsion 
    part ($k-s+1$ summands)
    \begin{equation*}
        \begin{aligned}
            &\ZZ/{2^s}[\uta^{-1},\cdots,u_{\lambda_{s-1}}^{-1}][a_{\lambda_{s+1}}^{-1},\cdots,\alk^{-1}]\langle\Si a_{\lambda_s}^{-i}u_{\lambda_s}^{-j}\rangle\\
            \oplus\,&\ZZ/{2^s}[\uta^{-1},\cdots,u_{\lambda_{s-1}}^{-1}][a_{\lambda_{s+2}}^{-1},\cdots,\alk^{-1}]\langle\Si a_{\lambda_{s+1}}^{-i}u_{\lambda_s}^{-j}\rangle\\
            \oplus\,&\cdots\\
            \oplus\,&\ZZ/{2^s}[\uta^{-1},\cdots,u_{\lambda_{s-1}}^{-1}]\langle\Si a_{\lambda_k}^{-i}u_{\lambda_s}^{-j}\rangle.
        \end{aligned}
    \end{equation*}
\end{enumerate}
\end{theorem}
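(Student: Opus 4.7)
The proof proceeds by induction on $k$. The base case $k=1$ is the classical computation of $H\Z_\star^{C_2}$ found in \cite{Duggerthesis,MR979507}. For the inductive step, assume the result is established for $C_{2^{k-1}}$; I will combine the reduction to quotient groups (Corollary \ref{reduction to quotient group}), the $a_{\lambda_k}$-periodicity (Proposition \ref{prop alambda iso}), and the structural Propositions \ref{prop u inverse}, \ref{prop total dim 0 coho nonorient}, and \ref{prop cohomology of total deg 1} to produce the claimed direct sum.

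The first step is to reduce to gradings $V$ with $|V|\in\{0,1\}$. Since the negative cone vanishes for $|V|\leq -1$ and $|V|\geq 2$ would force positive cohomological degrees past the CW dimension, and since multiplication by $\alk$ (together with the divisibility notation $\alk^{-1}$) provides isomorphisms between consecutive nonzero total degrees by Proposition \ref{prop alambda iso}, every class of $H\Z_{neg}^G$ sits in an infinite $\alk^{-1}$-tower emanating from a class in total degree $0$ or $1$. Thus it suffices to pin down the groups at these two total degrees and generate the stated summands by $\alk^{-1}$-divisibility.

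Next, I read off the three families of summands as follows. For orientable $V$ with $|V|=0$, Proposition \ref{prop u inverse} identifies $H\Z_V^G$ with $\Z/(1-\gamma)=\Z$, and the transfer $\tr^G_{G_V}=[G:G_V]$ contributes the $2^i$-prefactor when $\lambda_i$ is the top irreducible summand of the underlying representation; these give exactly the $k$ torsion-free summands of part (1). For non-orientable $V$ with $|V|=0$, Proposition \ref{prop total dim 0 coho nonorient} gives $\Z/\zeta_1=\Z/2$, which combined with $\alk^{-1}$-divisibility and the $u_{\lambda_i}^{-1}$-action from Proposition \ref{prop u inverse} produce the $k$ summands of the $\aal$-containing half of part (2). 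For gradings of total degree $1$, Proposition \ref{prop cohomology of total deg 1} yields cyclic groups of order $2^j$ where $\lambda_j$ is the top irreducible factor of the underlying representation; the case $j=1$ provides the $k-1$ non-$\aal$ summands of part (2), and the cases $2\leq j\leq k$ provide the $k-s+1$ summands within each $2^s$-torsion block of part (3).

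Finally, the multiplicative structure and Mackey functor structure must be verified. The polynomial/divisibility notation is consistent because of the gold relation (Proposition \ref{prop au relation}), which, upon formal inversion of $a$- and $u$-classes, forces distinct symbolic expressions for the same class to agree; this is what legitimises the generators $\Si a_{\lambda_s}^{-i}u_{\lambda_s}^{-j}$ and justifies grouping them under a single $\Z/2^s$-factor. The Mackey functor structure is determined inductively, using $H\Z^{\sharp C_2}\cong \Z_{C_{2^{k-1}}}$ for the structural maps above the $C_2$-level and invoking \cite[Lemma 17]{Yan22HZ} (via the $\aal$-action) to recover $\res^G_{G'}$ and $\tr^G_{G'}$. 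The principal obstacle is the combinatorial bookkeeping: one must verify that the infinite families of classes computed via Propositions \ref{prop u inverse}, \ref{prop total dim 0 coho nonorient}, and \ref{prop cohomology of total deg 1} reassemble into precisely the stated $3k-1+\tfrac{k(k-1)}{2}$ summands with neither overlap nor omission, and that the $\alk^{-1}$- and $u_{\lambda_i}^{-1}$-actions, constrained by the inverted gold relation, yield exactly the listed polynomial and module factors.
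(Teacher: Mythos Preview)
Your outline follows the same overall induction-plus-reduction strategy as the paper, and you correctly identify the structural Propositions \ref{prop u inverse}, \ref{prop total dim 0 coho nonorient}, \ref{prop cohomology of total deg 1} as the source of the three families. However, there is a real gap.

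First, a small inaccuracy: the assertion that ``every class of $H\Z_{neg}^G$ sits in an infinite $\alk^{-1}$-tower'' is false for the torsion-free classes in part (1). These lie in total degree $0$ and have nonzero restriction to the trivial subgroup, so by Proposition \ref{prop action of alambda} they are \emph{not} $\alk$-divisible. The paper separates this out explicitly, arguing that the restrictions are injective on the torsion-free part.

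The substantive gap is in the $2^s$-torsion analysis. You are right that Proposition \ref{prop cohomology of total deg 1} computes each individual group $H\Z^G_{U-|U|+1}$ as a cyclic group of the correct order, but this alone does not justify the \emph{names} $\Sigma^{-1}a_{\lambda_s}^{-i}u_{\lambda_s}^{-j}$, i.e.\ the claimed divisibility structure by the various $a_{\lambda_t}^{-1}$ and $u_{\lambda_t}^{-1}$, nor does it explain how the summands for $G$ arise from those for $C_{2^{k-1}}$ in the induction. Invoking the gold relation does not suffice: the gold relation constrains products of existing classes but does not by itself show that a given generator of a cyclic group is divisible by a particular $a_{\lambda_t}$ or $u_{\lambda_t}$. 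The paper supplies this via the cofiber sequence $G/G'_+\to S^0\xrightarrow{a_\alpha}S^\alpha$, obtaining the short exact sequence
\[
0\to\ZZ/2\xrightarrow{a_\alpha}\ZZ/2^i\xrightarrow{\res^G_{G'}}\ZZ/2^{i-1}\to 0,
\]
which exhibits each $2^s$-torsion generator for $G$ as a lift (under $\res^G_{G'}$) of a $2^{s-1}$-torsion generator for $G'$ known by induction; this is what pins down the names and shows that the ``new'' summand in each torsion block is exactly the one involving $u_{\lambda_k}^{-j}$ (resp.\ $a_{\lambda_k}^{-i}$). Without this mechanism, your bookkeeping step remains an assertion rather than an argument.
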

\begin{proof}
    The theorem is true for $k=1$ by \cite{Duggerthesis}, \cite{Gre18} and \cite{Zeng17}. It is true for $k=2$ by \cite{Yan22HZ}. Assume it is true for $k-1$. Note that since $\Z$ is a constant Mackey functor, for any $H\subset G$, we have that  $(H\Z_G)^{H}=H\Z_{G/H}$, where we use $H\Z_K$ to emphasise the $K$-equivariant $H\Z$. When $V\in RO(G)$ does not contain any copy of $\lambda_k$, by Corollary \ref{reduction to quotient group}, we get an isomorphism which we will denote by
    \[
    \varepsilon^*:H\Z_V^{C_{2^{k-1}}}\xrightarrow{\cong} H\Z^{G}_{V}
    \]
    Here for simplicity, we used $V$ for both the $C_{2^{k-1}}$ and the $C_{2^k}$-representation. Under this map, we have $\varepsilon^*(a_{\lambda_i})=a_{\lambda_{i}}$ and $\varepsilon^*(u_{\lambda_i})=u_{\lambda_{i}}$.
    
    Write $V=V^{\sharp}+a_k\lambda_k$ and note that $a_k\geq 0$ since we are in the negative cone. There are several different cases to consider.

    When $|V^{\sharp}|\geq 1$, then by Proposition \ref{prop alambda iso}, classes in $H\Z_{V^{\sharp}}^G$ are all infinitely $\alk$-divisible. For torsion classes in degrees with $|V^{\sharp}|=0$, by induction, we know they are all infinitely divisible by $a_{\lambda_{k-1}}$. Since $\res^G_e(a_{\lambda_{k-1}})=0$, these classes restrict to $0$ under $\res^G_e$. By Proposition \ref{prop action of alambda}, they are divisible by $\alk$. They are actually infinitely divisible by $\alk$, since for any $x\in H\Z_V^G$ with $|V^{\sharp}|=0$,  $\alk^{-1}x$ is in total degree $\geq 2$, thus infinitely $\alk$-divisible. As a result, we deduce that the torsion classes in $H\Z_V^G$ with $|V^{\sharp}|\geq 0$ are $\varepsilon^{*}H\Z_V^{C_{2^{k-1}}}[\alk^{-1}]$.

    For torsion-free classes in $H\Z_V^G$ with $|V^{\sharp}|\geq 0$, we know that $H\Z_{V^{\sharp}}^G$ consist of all the 'negative powers' of u's as in part $(1)$, with no $u_{\lambda_k}$ involved. By induction, we know that the Mackey functors they generate have $\ZZ$ in each level, with $\res^H_K$ equal to $1$ or $2$ when $[H:K]=2$. Thus restrictions are injective on these classes and they are not divisible by $\alk$.

    If $|V^{\sharp}|<0$, then let $c$ be the unique positive integer such that the total degree of $W=V^{\sharp}+c\lambda_k$ equals $0$ or $1$ depending on whether $|V^{\sharp}|$ is even or odd. Let $d_k=a_k-c$, so that we have $V=W+d_k\lambda_k$. 
    
    If $|W|=0$ and orientable, we get the torsion-free classes 
    \[\ZZ[u_{\lambda_{k-1}}^{-1},\cdots,\uta^{-1}]\langle 2^k u_{\lambda_k}^{-i}\rangle
    \]
    by Proposition \ref{prop u inverse}. They are not $\alk$ divisible as before. This completes the inductive proof of part $(1)$. If $|W|=0$ and non-orientable, we get the classes 
    \[
    \ZZ/2[u_{\lambda_2}^{-1},\cdots,u_{\lambda_k}^{-1}]\langle\Si{ a_{\lambda_k}^{-1}\uta^{-i}}\rangle\langle{\aal}\rangle
    \]
    by Proposition \ref{prop total dim 0 coho nonorient}. We will justify their names later. Since $H\Z_W^e=\Z$, the restrictions of these classes to the trivial subgroup are $0$, thus they are infinitely divisible by $\alk$. This finishes the inductive proof of the classes in part $(2)$ involving $\aal$.

    Now we consider the gradings with $|W|=1$. By 
    Proposition \ref{prop cohomology of total deg 1}, we have $H\Z_{U-|U|+\alpha}^G=0$ for any orientable $U$. Thus we only need to look at the groups $H\Z_{W}^G$ with $W=U-|U|+1$. From the cofiber sequence
    \[
    G/G'_+\xrightarrow{}S^0\xrightarrow{\aal}S^{\alpha},
    \]
    we can derive long exact sequences for some orientable $G$-representation $U$
    \begin{equation}
    \label{seq1}
    \tag{$\ast$}
    \cdots\to H\Z_{U+2-|U|}^{G'}\to H\Z_{U+\alpha+1-|U|}^G\xrightarrow{\aal}H\Z_{U-|U|+1}^G\xrightarrow{\res^G_{G'}}H\Z_{U-|U|+1}^{G'}\xrightarrow{\tr^G_{G'}}H\Z_{U-|U|+\alpha}^G\to\cdots,
    \end{equation}
and
    \begin{equation}
    \label{seq2}
    \tag{$\ast\ast$}
    \cdots\to H\Z_{U+1-|U|}^{G'}\to H\Z_{U+1-|U|}^G\xrightarrow{\aal}H\Z_{U-|U|+1-\alpha}^G\xrightarrow{\res^G_{G'}}H\Z_{U-|U|}^{G'}\xrightarrow{\tr^G_{G'}}H\Z_{U-|U|}^G\to\cdots.
    \end{equation}
We focus on the groups $H\Z_{U-|U|+1}^G$ and $H\Z_{U-|U|+1-\alpha}^G$ in the middle.

In the sequence \eqref{seq1}, the first group is $0$ by Theorem \ref{keythm}, since in the cellular cochain of $S^U$, $d^{|U-2|}=2^i$ for some $i$ and $\ZZ$ is torsion-free. The last group is $0$ by Proposition \ref{prop cohomology of total deg 1}. The group $H\Z_{U+\alpha+1-|U|}^G=\ZZ/2$ since in the cellular cochain of $S^{U+\alpha}$, $d^{|U|-1}=\overline{\zeta}_i$ for some $i$ which is $0$, and $d^{|U|-2}=1+\gamma=2$. Since each group is cyclic, as can be seen easily from the cellular cochain, we know the sequence \eqref{seq1} is always of the form
\begin{equation}
\label{ext of cyclic groups}
\begin{tikzcd}
    0\rar& \ZZ/2\rar["\aal"]& \ZZ/2^i\rar["\res^G_{G'}"]& \ZZ/2^{i-1}\rar& 0
    \end{tikzcd}
\end{equation}
for $1\leq i\leq k$. More concretely, from the cellular cochain, we have
\begin{equation*}
        H\M_{W}^G=\begin{cases}
             \ZZ/{2^k}&\text{if}\,c\geq 2,\\
             \ZZ/{2^i}&\text{if}\,c=1\,\text{and}\,\lambda_{max}(V^{\sharp})=\lambda_i,\\
             0&\text{if}\,c=1\,\text{and}\,V^{\sharp}=|V^{\sharp}|.
        \end{cases}
    \end{equation*}
Thus all the $2^{i-1}$-torsion groups in $H\Z^{G'}_{V}$ with $|W|\geq 1$ lift to $H\Z_{V}^G$ and become $2^i$-torsion, when $i\geq 2$. For example, we look at the $4$-torsion classes in $H\Z_{V}^G$ for $V$ in the negative cone. There are two sources of these classes. One is from the quotient group $C_{2^{k-1}}$ through $\varepsilon^*$ when we already have $|V^{\sharp}|\geq 0$. Another is from the subgroup $G'$ lifted via $\res^G_{G'}$, when $|V^{\sharp}|<0$ but $|W|=1$. By induction, we have the $4$-torsion classes in $H\Z_{neg}^{G'}$ ($k-2$ summands)
\begin{equation*}
        \begin{aligned}
            &\ZZ/4[\uta^{-1}][a_{\lambda_{3}}^{-1},\cdots,a_{\lambda_{k-1}}^{-1}]\langle\Si a_{\lambda_2}^{-i}u_{\lambda_2}^{-j}\rangle\\
            \oplus\,&\ZZ/4[\uta^{-1}][a_{\lambda_{4}}^{-1},\cdots,a_{\lambda_{k-1}}^{-1}]\langle\Si a_{\lambda_{3}}^{-i}u_{\lambda_2}^{-j}\rangle\\
            \oplus\,&\cdots\\
            \oplus\,&\ZZ/4[\uta^{-1}]\langle\Si a_{\lambda_{k-1}}^{-i}u_{\lambda_2}^{-j} \rangle
        \end{aligned}
    \end{equation*}
They give rise to classes in $H\Z_{neg}^G$ via $\varepsilon^*$, and we should join $[\alk^{-1}]$ since they are infinitely $\alk$-divisible. These are the first $k-2$ summands of the $4$-torsion classes in part $(3)$ for $C_{2^k}$. The last summand of the $4$-torsion classes are lifted via $\res^G_{G'}$ from the classes
\[\ZZ/{2}\langle\Si a_{\lambda_{k-1}}^{-i}u_{2\alpha}^{-j}\rangle
\]
in $H\Z_{neg}^{G'}$. These classes lifts to
\[\ZZ/{4}[\uta^{-1}]\langle\Si a_{\lambda_k}^{-i}u_{\lambda_2}^{-j}\rangle.
\]
Notice that we added $[u_{2\alpha}^{-1}]$ since $\res^G_{G'}(\uta)$ is a unit. The $2^s$-torsion classes for $2\leq s\leq k$ are proved the same way, and we finished the inductive proof of part $(3)$.

We are left with the proof of the $2$-torsion classes that does not involve $\aal$. The first $k-2$ summands comes from $\varepsilon^*$ followed by $\alk$-divisibility. The last summand
\[
\ZZ/2\langle\Si \alk^{-i}\uta^{-j}\rangle
\]
comes for the sequence \eqref{ext of cyclic groups} when $i=1$. From the cellular cochain, we know in the case of $|V^{\sharp}|< 0$ and $|W|=1$, $H\Z_W^G=\ZZ/2$ only if $W=*+a_1{\lambda_1}+\lambda_k,a_1\neq 0$. In these case, these groups are generated by
\[
\ZZ/2\langle\Si \alk^{-1}\uta^{-j}\rangle.
\]
Since they are $\alk$-divisible, we will finish the proof once we justify the notations.

As in \cite{Yan22HZ}, the classes
\[
\ZZ/2[\uta^{-1}]\langle \tr^{C_{2^k}}_{C_{2^{k-1}}}(e_{3\alpha})\alk^{-1}\rangle
\]
can be also written as
\[
\ZZ/2\langle \Si\aal^{-1}\alk^{-1}\uta^{-i}\rangle.
\]
The exact sequence \eqref{seq1} tells us
\[
\ZZ/2\langle \Si\aal^{-1}\alk^{-1}\uta^{-i}\rangle\cdot\aal= \ZZ/2\langle\Si \alk^{-1}\uta^{-i}\rangle
\]
since $H\Z_{U-|U|+1}^{G'}=0$. To justify the names of the classes
\[
    \ZZ/2[u_{\lambda_2}^{-1},\cdots,u_{\lambda_k}^{-1}]\langle\Si{ a_{\lambda_k}^{-1}\uta^{-i}}\rangle\langle{\aal}\rangle
\]
we look at the exact sequence \eqref{seq2}. We get 
\[
\ZZ/2\langle\Si \alk^{-1}\uta^{-i}\rangle\cdot\aal=\ZZ/2\langle\Si{ a_{\lambda_k}^{-1}\uta^{-i}}\rangle\langle{\aal}\rangle.
\]
This is because in the degrees $U-|U|$, transfers are either $1$ or $2$, and $\ker(\tr(U-|U|)^G_{G'})=0$.
\end{proof}

\begin{remark}
    The result has been verified additively by Anderson duality used in \cite{Zeng17}. We have $I_{\ZZ}(H\Z)\simeq \Sigma^{2-\lambda_k}H\Z$. Anderson duality gives the following short exact sequence of abelian groups
    \[
    0\to \Ext^1(H\Z_{\star-1}^G,\ZZ)\to H\Z_{\lambda_k-2-\star}^G\to \Hom_{\Ab}(H\Z_{\star}^G,\ZZ)\to 0.
    \]
    Torsion and torsion-free classes in $H\Z_{neg}^G$ will dualize to $H\Z_{pos}^G$ (sometimes outside the positive cone) via $\Ext^1(-,\ZZ)$ and $\Hom_{\Ab}(-,\ZZ)$ respectively. By an abuse of notation, we denote this duality by $D$. For example, we have the dualities
    \begin{equation*}
        \begin{aligned}
            &D(2^ku_{\lambda_k}^{-1})=1,\\
            &D(2^iu_{\lambda_i}^{-1})=2^{k-i}u_{\lambda_i}u_{\lambda_k}^{-1},\\
            &D(\Si a_{\lambda_i}^{-1}\uta^{-1})=\aal^2a_{\lambda_i}\alk^{-1},\\
            &D(\Si \aal a_{\lambda_i}^{-1}\uta^{-1})=\aal a_{\lambda_i}\alk^{-1},\\
            &D(\Si\alk^{-1}u_{\lambda_k}^{-1})=\alk,\\
            &D(\Si a_{\lambda_t}^{-1}u_{\lambda_s}^{-1})=a_{\lambda_t}a_{\lambda_s}\alk^{-1}(t\geq s\geq 2).
        \end{aligned}
    \end{equation*}
The divisibility relations between the Euler and orientation classes on the right hand sides of the equations can be seen by cellular computations. See for example \cite{HHRc}.
\end{remark}

\bibliographystyle{alpha}
\bibliography{bib}

\end{document}